\numberwithin{equation}{section} % for equation (1.1)
\newtheorem{theorem}{Theorem}[section]
\newtheorem{lemma}[theorem]{Lemma}
\newtheorem{corollary}[theorem]{Corollary}
\newtheorem{assumption}[theorem]{Assumption}
\newtheorem{remark}[theorem]{Remark}
\newtheorem{definition}[theorem]{Definition}
\newcommand\doi[1]{\href{http://dx.doi.org/#1}{doi: \nolinkurl{#1}}}
\renewcommand\AA{\mathcal{A}}
\newcommand\KK{\mathcal{K}}
\newcommand\N{\mathbb{N}}
\newcommand\R{\mathbb{R}}
\newcommand\MM{\mathcal{M}}
\newcommand\RR{\mathcal{R}}
\newcommand\TT{\mathcal{T}}
\newcommand\NN{\mathcal{N}}
\renewcommand\d{\mathrm{d}}
\newcommand\dist{\operatorname{dist}}
\newcommand\cl{\operatorname{cl}}
\newcommand\weakly{\rightharpoonup}
\newcommand\weaklystar{\stackrel\star\rightharpoonup}
\newcommand\Uad{U_{\mathrm{ad}}}
\newcommand\Ueff{U_{\mathrm{eff}}}
\newcommand\Ustate{U_{\mathrm{state}}}
\newcommand\qsupp{\operatorname{q-supp}}
\newcommand\supp{\operatorname{supp}}
\newcommand\polar{^\circ}
\newcommand\anni{^\perp}
\newcommand\adjoint{^\star}
\newcommand\dualspace{^\star}
\newcommand\esssup{\operatorname{ess\,sup}}
\DeclareMathAlphabet{\mathpzc}{OT1}{pzc}{m}{it}
\DeclarePairedDelimiter\abs{\lvert}{\rvert}
\DeclarePairedDelimiter\norm{\lVert}{\rVert}
\DeclarePairedDelimiterX\dual[2]{\langle}{\rangle}{#1,#2}
\DeclarePairedDelimiterX\innerprod[2](){#1,#2}
\providecommand\given{\nonscript\;\delimsize|\nonscript\;}
\DeclarePairedDelimiterX\set[1]\{\}{#1}
\DeclarePairedDelimiter\parens()
\DeclarePairedDelimiter\bracks[]
\DeclarePairedDelimiter\seq()
\DeclarePairedDelimiter\braces\{\}
\newcommand{\embeds}{\hookrightarrow}
\newenvironment{msc}%
{\par\noindent{\sectfont MSC (2020):}\ }{\par}
\newcommand{\mscLink}[1]{\href{https://www.ams.org/mathscinet/msc/msc2020.html?t=#1}{#1}}
\newenvironment{keywords}%
{\par\noindent{\sectfont Keywords:}\ }{\par}
\DeclareCiteCommand{\cite}{%
	\ifbibmacroundef{cite:init}{}{\usebibmacro{cite:init}}\usebibmacro{prenote}%
}{%
	\usebibmacro{citeindex}%
	\printtext[bibhyperref]{\usebibmacro{cite}}%
}{%
	\ifbibmacroundef{cite:init}{\multicitedelim}{}%
}{%
	\usebibmacro{postnote}%
}%
\DeclareCiteCommand{\parencite}[\mkbibbrackets]{%
	\ifbibmacroundef{cite:init}{}{\usebibmacro{cite:init}}\usebibmacro{prenote}%
}{%
	\usebibmacro{citeindex}%
	\printtext[bibhyperref]{\usebibmacro{cite}}%
}{%
	\ifbibmacroundef{cite:init}{\multicitedelim}{}%
}{%
	\usebibmacro{postnote}%
}%
\let\cite\parencite
\begin{document}
\title{First-order conditions for the optimal control of the obstacle problem with state constraints\footnote{%
		This research was 
		supported by the German Research Foundation (DFG) within the priority 
		program ``Non-smooth and Complementarity-based Distributed Parameter Systems: 
		Simulation and Hierarchical Optimization'' 
		(SPP 1962) under grant numbers NE 1941/1-2 and WA 3636/4-2.
}}
\author{Ira Neitzel\footnote{%
		Rheinische Friedrich Wilhelms Universität Bonn,
		Institute for Numerical Simulation,
		Friedrich-Hirzebruch-Allee 7, 53115 Bonn, Germany, \url{https://ins.uni-bonn.de/staff/neitzel}
}
\and
Gerd Wachsmuth\footnote{%
	Brandenburgische Technische Universität Cottbus-Senftenberg, 
	Institute of Mathematics, 
	03046 Cott\-bus, Germany, 
	\url{https://www.b-tu.de/fg-optimale-steuerung}
}}
\maketitle
\begin{abstract}
	We consider an optimal control problem
	in which the state is governed by an unilateral obstacle problem
	(with obstacle from below) and
	restricted by a pointwise state constraint (from above).
	In the presence of control constraints,
	we prove, via regularization of the state constraints, that a system of C-stationarity
	is necessary for optimality.
	In the absence of control constraints,
	we show that local minimizers are even strongly stationary
	by a careful discussion of the primal first-order conditions
	of B-stationary type.
\end{abstract}
\begin{keywords}
	Obstacle problem,
	state constraints,
	C-stationarity,
	strong stationarity
\end{keywords}
\begin{msc}
	\mscLink{49K21},
	\mscLink{35J86}
\end{msc}
\section{Introduction}

In this paper, we analyse an optimal control problem subject to an obstacle (from below) and subject to an additional pointwise state constraint (from above).
More precisely, we are interested in the problem
\begin{equation}
	\label{eq:P:unreg}
	\tag{$\mathbf{P}$}	
	\begin{aligned}
		\text{Minimize}&\quad J(y,u) \\
		\text{with respect to}&\quad (y,u)\in H_0^1(\Omega)\times L^2(\Omega)\\
		\text{such that}& \quad y\in K,\; \dual{\AA y-u}{v-y} \ge 0\quad\forall v\in K, \\
		& \quad u \in \Uad \\
		\quad \text{and}& \quad y\le y_b\quad\text{a.e.~in } \Omega.
	\end{aligned}
\end{equation}
In our notation, $y$ is the state and $u$ is the control, and
the set $K$ is defined by the lower obstacle $y_a$,
i.e.,
\begin{equation*}
	K := \set{ v \in H_0^1(\Omega) \given v \ge y_a \text{ a.e.\ in }\Omega }.
\end{equation*}
For the precise assumptions we refer to \cref{asm:big_list}.
We just mention that we work with minimal regularity,
i.e., the coefficients in the differential operator $\AA$
are just assumed to be measurable and bounded,
and we only require
the so-called
``uniform exterior cone condition''
of the bounded domain $\Omega \subset \R^n$, $n \in \set{2,3}$.

Since the solution operator $S$ of the obstacle
is not differentiable,
its optimal control is a challenging problem.
The first contribution is the seminal work
\cite{Mignot1976},
which demonstrates the directional differentiability of $S$
and
in which optimality conditions of strongly stationary type were derived.
Another classical work is \cite{Barbu1984},
in which regularization methods are used to provide stationarity conditions.
It turns out that regularization techniques are applicable to a wider range of problems,
in particular to problems including control constraints.
However, the resulting optimality conditions are weaker than strong stationarity.
It seems that the so-called system of C-stationarity is the best system
which can be derived in this way.
For some more recent contributions to the optimal control of the obstacle problem,
we refer to
\cite{
	HintermuellerSurowiec2011,
	KunischWachsmuth2011,
	SchielaWachsmuth2013,
	Wachsmuth2013:2,
	Wachsmuth2014:2,
	HarderWachsmuth2017:2%
}.

PDE-constrained optimal control problems with pointwise state constraints have also been known as a challenging problem class with respect to optimality conditions for quite some time. The theory is typically based
on the so called Slater condition, for which continuity of the states is usually required. Lagrange multipliers in the first-order optimality system are then
obtained in the space of regular Borel measures, see \cite{cas86}, which in turn leads to low
regularity of the adjoint state. From the meanwhile very rich literature on different aspects of purely state-constrained problems we refer only to \cite{cas93}, where the results of \cite{cas86} have also been extended to boundary control of semilinear elliptic PDEs, to \cite{rayzid98,rayzid99} for some earlier results on (semilinear) parabolic problems, as well as to related problems with constraints of bottleneck type, \cite{bertr99}. We also mention \cite{hinkun09} for problems with control, state and gradient constraints. The structure of Lagrange multipliers has for instance been discussed in \cite{berkun02a}. More recently, in \cite{CasasMateosVexler2014}, the authors showed improved regularity for the Lagrange multiplier, inspired by a result for sparse optimal controls, see \cite{PieperVexler:2013}. Sparse control became a field of active research rather recently.
These problems resemble state-constrained problems in the way that if for instance measures as control variables are considered, this leads to regularity difficulties in the state equation, instead of the adjoint equation in case of pointwise state constraints. 

The challenges and regularity issues associated with the Lagrange multiplier also influence all further analysis, such as second-order sufficient conditions (SSC) for nonconvex problems, analysis of solution algorithms, or numerical analysis of such problems. For an introductory overview on general aspects of second-order sufficient conditions and finite element error analysis for PDE-constrained optimization, not restricted to pointwise state constraints, we mention \cite{castr15} or \cite{HinzeTroeltzsch:2010}, respectively.

There are a number of well-established regularization techniques that help to avoid theoretical and numerical difficulties associated with pointwise state constraints. We mention the rather classical Moreau-Yosida regularization approach from
\cite{itokun03} that we will pursue in this paper, a Lavrentiev-regularization technique presented originally in \cite{meyrtr06}, as well as the virtual control regularization approach from \cite{kruroe09}.
Moreover, barrier methods are in use; we refer for instance to \cite{sch09} or  \cite{ulbulb09}.
As the literature on regularization of pointwise state constraints is meanwhile also rather rich, let us only refer to \cite{berhadhin00, hinitokun03,hintryou08} as examples of a few publications related to the Moreau-Yosida penalization. 

We also would like to point out \cite{rtr07} and the references therein, where elliptic and parabolic problems with mixed control-state constraints have been considered. Lagrange multipliers are shown to exist in $L^p$-spaces. While these constraints exhibit better regularity properties, the analysis begins with existence of multipliers in the dual space of $L^\infty$, which is even less regular than the space of regular Borel measures. The regularity of the multipliers and also the optimal control is subsequently improved. For a problem with bilateral control and mixed control-state constraints, a separation condition for the active sets allows to prove $L^1$-regularity of the multipliers and further regularity improvements via bootstrapping arguments.
Such a separation condition has also been used in e.g.\ \cite{altgrmetroe10} for stability analysis of linear-quadratic elliptic problems with mixed constraints, for convergence analysis of the SQP method for nonlinear problems in \cite{grimetr08}, and in \cite{neitr09} for Lavrentiev regularization of pointwise state constraints in parabolic problems.

In our analysis, we will rely on separate supports of the multipliers associated with the obstacle and the state constraints. In essence, this condition allows to apply typical cut-off-type arguments. If the support of a low-regularity Lagrange-multiplier is clearly separated from another point or rather area of interest, the smoothing properties of the solution operators can be used to prove higher regularity on appropriate subdomains. In our case, this means that the adjoint state admits $H_0^1-$regularity away from the support of the state-constraint multiplier. Cut-off arguments are a typical strategy to prove known higher interior regularity results for PDEs. For PDE-constrained optimization, such techniques have for example also been used
 to consider Dirichlet boundary control of Poisson's equation with pointwise state constraints in the interior, analyzed in \cite{MateosNeitzel:2016} even though the states only admit $H^{\frac{1}{2}}(\Omega)$ regularity on the whole domain. 

The literature concerning optimal control of the obstacle problem
with additional state constraints is rather scarce.
We are only aware of three publications.

In
\cite{He1987},
the author
considers a problem which is more general than \eqref{eq:P:unreg}.
In \cite[Theorem~6.2]{He1987},
a system of C-stationarity is derived
which includes a multiplier $\nu \in L^2(\Omega)$
for the state constraint.
This unusual high regularity seems to be related
to the requirement (6.5) therein.
It is assumed
that perturbations $z \in L^2(\Omega)$
in the state constraint lead to
perturbations of the optimal value
which can be bounded from below (up to first order) by the $L^2(\Omega)$-norm of $z$.
It is not clear whether this assumption can be verified
for a large class of problems.
We do not expect that the multiplier of the state constraint considered in our paper belongs to $L^2(\Omega)$.

In the contribution
\cite{Bergounioux1998},
a more general variational inequality is considered.
This variational inequality is regularized
and
optimality conditions for the regularized problem
(subject to the state constraints)
are derived.
The passage to the limit in this optimality system is not addressed.

\cite[Section~3]{BergouniouxTiba1998}
addresses a problem very similar to \eqref{eq:P:unreg},
but the state constraint is defined by a closed convex set in $H_0^1(\Omega)$.
Again, the obstacle problem is regularized.
This contribution also addresses
the passage to the limit in the optimality system.
The resulting optimality system
uses a limiting object defined in
\cite[Definition~3.1]{BergouniouxTiba1998}.
It is not clear how much information is carried by this object.
We just mention that optimality systems defined
by the so-called limiting normal cone
seems to be of limited use in infinite dimensions,
see \cite{MehlitzWachsmuth2017:1,HarderWachsmuth2017:1}.

In this work,
we present three optimality systems.
First,
in \cref{thm:B_stat_second}
we derive a primal optimality system
(i.e., it does not involve dual multipliers),
which is typically called B-stationarity.
To this end, we heavily utilize
that the pointwise convexity of the solution operator of the obstacle problem
(see \cref{lem:pw_convex})
renders the state constraint
convex w.r.t. the control,
i.e., the set $\Ustate := \set{u \in L^2(\Omega) \given S(u) \le y_b}$
is convex,
where $S$ is the solution operator of the obstacle problem,
see \cref{lem:mask_state_constraint}.

Second,
we use a regularization approach to derive optimality conditions
of C-stationary type, see \cref{thm:c_stationarity}.
To this end, we just regularize the state constraints
and apply the C-stationarity conditions
from \cite{Wachsmuth2014:2}
to the regularized problems.
The passage to the limit in the optimality system
requires some care
due to the multiplier of the state constraint,
which is a measure and appears on the right-hand side of the adjoint equation.

Finally,
we show that the system of B-stationarity
is equivalent to strong stationarity in the absence of control constraints,
see \cref{thm:strong}.
To this end,
we utilize classical results by \cite{Mignot1976}.
Due to the state constraints,
this is much more difficult than in the classical setting.
We mention that this result also allows
to characterize the normal cone to the convex set $\Ustate$.

The paper is structured as follows.
In \cref{subsec:notation} we fix some notation.
Linear PDEs governed by the differential operator $\AA$
and its adjoint $\AA\adjoint$
are discussed in \cref{subsec:sol_eq}.
Of particular importance are
\cref{lem:hinfboundu,lem:bound_adjoint}.
In these results,
we discuss PDEs with irregular but localized right-hand sides
and show that the solution enjoys higher regularity
if we neglect a neighborhood of the support of the right-hand side.
Afterwards,
the obstacle problem is discussed in \cref{subsec:solution_operator_obstacle}.
Using the regularity results of the previous section,
we can provide uniform estimates for the directional derivative $S'$
away from the active set $\set{y = y_a}$,
see \cref{lem:better_differentiability}.
In \cref{subsec:oc},
we collect all the assumptions,
see \cref{asm:big_list},
and give some basic properties of \eqref{eq:P:unreg}.
The optimality conditions are discussed in
\cref{sec:primal_systems,sec:regularization,sec:strong_stat},
as described above.

\section{Preliminaries and technical results}
\label{sec:prelim}
We start by setting up the notation in \cref{subsec:notation}.
Afterwards,
we discuss the solution mapping of the differential operator in \cref{subsec:sol_eq}.
The variational inequality will be addressed in \cref{subsec:solution_operator_obstacle},
and finally,
we give some basic properties of the optimal control problem in \cref{subsec:oc}.

\subsection{Notation}
\label{subsec:notation}

Let us fix some notation.
The positive numbers are denoted by $\R^+ := (0,\infty)$.
For a (real) Banach space $X$,
we denote by $X\dualspace$ the (topological) dual space of $X$.
The corresponding duality pairing is denoted by
$\dual{\cdot}{\cdot}_X \colon X\dualspace \times X \to \R$.
If the space $X$ is clear from the context, we may omit the index $X$. The inner product in $L^2(\Omega)$
is denoted by
$\innerprod{\cdot}{\cdot}$.

For a subset set $C \subset X$,
we define the
polar cone and the annihilator by
\begin{align*}
	C\polar &:= \set{ \xi \in X^\star \given \forall x \in C: \dual{\xi}{x}_X \le 0}
	, \\
	C\anni &:= \set{ \xi \in X^\star \given \forall x \in C: \dual{\xi}{x}_X = 0}
	,
\end{align*}
respectively.
Analogously, we define $D\polar, D\anni \subset X$ for $D \subset X\dualspace$.
In particular, the annihilator of $\xi \in X^\star$ is defined via
\begin{equation*}
	\xi\anni := \set{x \in X \given \dual{\xi}{x}_X = 0}.
\end{equation*}
Now, let $C \subset X$ be closed and convex.
For all $x \in C$,
we define the
radial cone, the tangent cone, and the normal cone
to $C$ at $x$
via
\begin{align*}
	\RR_C(x) &:=
	% \set{ h \in X \given \exists t > 0 : x + t \, h \in C}
	% =
	\bigcup_{\lambda > 0} (C - x),
	&
	\TT_C(x) &:= \cl\parens*{\RR_C(x)},
	&
	\NN_C(x) &:= \TT_C(x)\polar = (C - x)\polar,
\end{align*}
respectively.
In case $x \in X \setminus C$,
we define all these cones to be the empty set.
In case $X = L^2(\Omega)$,
where $\Omega \subset \R^n$ is measurable,
we identify $X^\star$ with $X$ in the canonical way
and, therefore, interpret $\NN_C(x)$ as a subset of $X$.

Further, we need some basic concepts of
capacity theory.
For a summary, we refer to
\cite[Section~2]{Wachsmuth2013:2},
\cite[Section~1.2]{Wachsmuth2014:2}
and
\cite[Section~3]{HarderWachsmuth2017:2}.
In particular, we
require the definition of
the quasi-support $\qsupp(\xi)$ for measures $\xi \in H^{-1}(\Omega)^-$,
see \cite[Lemma~3.1]{Wachsmuth2013:2}
and \cite[Lemma~3.7, Definition~3.8]{HarderWachsmuth2017:2}
(called ``fine support'' therein).

If $y \colon \Omega \to \R$ is some function,
we define
\begin{equation*}
	\set{ y > 0 }
	:=
	\set{ x \in \Omega \given y(x) > 0 }
	.
\end{equation*}
Note that this set is defined up to sets of measure zero if $y \in L^r(\Omega)$, $r \in [1,\infty]$,
and up to sets of capacity zero if $y \in H^1(\Omega)$.
The same notation will be used for other relations
and also with more than one function, e.g.,
$\set{y_1 > y_2}$
with the obvious meaning.

\subsection{Solution operators of differential equations}
\label{subsec:sol_eq}
In this section, we specify the properties of the differential operator $\AA$ appearing in \eqref{eq:P:unreg}. To this end, let us define $\AA \colon H^1(\Omega) \to H^{-1}(\Omega)$
via
\begin{equation}
	\label{eq:diff_op}
	\begin{aligned}
		\dual{\AA y}{v}
		&:=
		\int_\Omega
		\sum_{i,j = 1}^n a_{ij} \, \partial_j y \, \partial_i v
		+
		\sum_{i = 1}^n b_i \, y \, \partial_i v
		+
		\sum_{i = 1}^n c_i \, \partial_i y \, v
		+
		d \, y \, v
		\, \d x
		\\
		&=
		\int_\Omega
		\nabla v^\top A \, \nabla y
		+
		y \, \parens{b^\top \nabla v}
		+
		v \, \parens{c^\top \nabla y}
		+
		d \, y \, v
		\,
		\d x
		\qquad\forall y \in H^1(\Omega), v \in H_0^1(\Omega).
	\end{aligned}
\end{equation}
For the coefficients appearing in \eqref{eq:diff_op},
we assume measurability and boundedness,
i.e.,
\begin{equation}
	\label{eq:regularity_coefficients}
	a_{ij}, b_i, c_i, d \in L^\infty(\Omega)
\end{equation}
on a domain $\Omega \subset \R^n$, $n \in \set{2,3}$, satisfying a uniform exterior cone condition.
Note that this is precisely the definition of the operator \cite[(8.1)]{GilbargTrudinger2001}
and the regularity condition of $\Omega$ described on page~205 therein.
As usual, we will denote by $\Gamma:=\partial\Omega$ the boundary of $\Omega$.
For our analysis, we assume that $\AA$ is strictly elliptic, i.e.,
there exists $\gamma_0 > 0$ such that
\begin{equation}
	\label{eq:elliptic}
	\sum_{i,j=1}^n a_{ij}(x) \, w_i \, w_j
	=
	w^\top A(x) \, w
	\ge
	\gamma_0 \, \norm{w}_{\R^n}^2
	\qquad
	\forall w \in \R^n \text{ and a.a. } x \in \Omega.
\end{equation}
Further, $\AA$ should be coercive on $H_0^1(\Omega)$,
i.e., there exists $\gamma_1 > 0$
such that
\begin{equation}
	\label{eq:coercivity}
	\dual{\AA y}{y}
	\ge
	\gamma_1 \, \norm{y}_{H_0^1(\Omega)}^2
	\qquad\forall y \in H_0^1(\Omega).
\end{equation}

These assumptions on $\AA$ hold throughout the paper.
In the remainder of this section, we are concerned with existence and regularity results for the differential equation
\begin{equation}\label{eq:Aeq}
	y \in H_0^1(\Omega), 
	\qquad
	\AA y = f
	\quad\text{in } H^{-1}(\Omega)
\end{equation}
and the associated adjoint equation.
The following is a standard existence and regularity result and the starting point of our analysis.
\begin{theorem}\label{thm:existenceh1}
Let $f\in H^{-1}(\Omega)$. Then there exists a unique weak solution $y\in H_0^1(\Omega)$ of \eqref{eq:Aeq}, satisfying
\begin{equation}\label{eq:h1bound}
\norm{y}_{H_0^1(\Omega)}\le C\norm{f}_{H^{-1}(\Omega)}.
\end{equation}
\end{theorem}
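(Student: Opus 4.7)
The plan is to apply the Lax--Milgram lemma to the bilinear form
\begin{equation*}
  a(y,v) := \dual{\AA y}{v}, \qquad y,v \in H_0^1(\Omega),
\end{equation*}
induced by \eqref{eq:diff_op}. Once boundedness and coercivity of $a$ are verified on $H_0^1(\Omega) \times H_0^1(\Omega)$, the Lax--Milgram lemma immediately yields a unique $y \in H_0^1(\Omega)$ with $a(y,v) = \dual{f}{v}$ for all $v \in H_0^1(\Omega)$, which is precisely the weak formulation of \eqref{eq:Aeq}.

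First, I would verify boundedness of $a$. Term by term, using the Cauchy--Schwarz inequality together with the $L^\infty$-bounds \eqref{eq:regularity_coefficients} on the coefficients, one obtains
\begin{equation*}
  \abs{a(y,v)}
  \le
  \parens*{
    \norm{A}_{L^\infty} + \norm{b}_{L^\infty} + \norm{c}_{L^\infty} + \norm{d}_{L^\infty}
  }
  \parens*{
    \norm{\nabla y}_{L^2} \norm{\nabla v}_{L^2}
    + \norm{y}_{L^2} \norm{\nabla v}_{L^2}
    + \norm{\nabla y}_{L^2} \norm{v}_{L^2}
    + \norm{y}_{L^2} \norm{v}_{L^2}
  },
\end{equation*}
and the continuous embedding $H_0^1(\Omega) \embeds L^2(\Omega)$ (via the Poincaré inequality, available since $\Omega$ is bounded) absorbs the $L^2$-norms into the $H_0^1$-norms. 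Coercivity of $a$ on $H_0^1(\Omega)$ is nothing but the standing assumption \eqref{eq:coercivity}, so no further work is required there. Thus Lax--Milgram applies and gives existence and uniqueness of $y$.

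Finally, the a priori estimate \eqref{eq:h1bound} follows by testing the variational equation with $v = y$: using \eqref{eq:coercivity} and the definition of the $H^{-1}(\Omega)$-norm,
\begin{equation*}
  \gamma_1 \, \norm{y}_{H_0^1(\Omega)}^2
  \le
  a(y,y)
  =
  \dual{f}{y}
  \le
  \norm{f}_{H^{-1}(\Omega)} \norm{y}_{H_0^1(\Omega)},
\end{equation*}
which upon division by $\norm{y}_{H_0^1(\Omega)}$ (the case $y = 0$ is trivial) yields \eqref{eq:h1bound} with $C = \gamma_1^{-1}$. There is no real obstacle here; the only point deserving a line of attention is that the lower-order terms involving $b$, $c$, $d$ do not destroy boundedness, which is handled by the Poincaré inequality as indicated.
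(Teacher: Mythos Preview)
Your proposal is correct and takes essentially the same approach as the paper, which simply invokes the well-known Lax--Milgram theory without further detail. Your version expands this by explicitly verifying boundedness from \eqref{eq:regularity_coefficients} and deriving the constant $C = \gamma_1^{-1}$, which is more than the paper provides but entirely in line with its intent.
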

\begin{proof}
Under our assumptions on $\AA$,  this follows by the well-known Lax-Milgram theory.
\end{proof}
If we can rely on more regularity of the data, we also obtain better regularity properties of the solution.
However, we are limited by the low regularity \eqref{eq:regularity_coefficients} of the coefficients.
The following theorem,
that particularly includes the case of right-hand sides in $L^2(\Omega)$ for spatial dimensions up to $n=3$,
guarantees Hölder regularity.
In the sequel,
this will allow to rely on a so-called Slater condition as constraint qualification
in order to obtain first-order necessary optimality conditions for controls in $L^2(\Omega)$.

\begin{theorem}\label{thm:hoelderreg}
For every $q' > n$ and
$f\in W^{-1,q'}(\Omega)$, the unique solution $y \in H_0^1(\Omega)$ of equation \eqref{eq:Aeq}
enjoys the additional regularity
$y\in C_0^{0,\alpha}(\Omega)$ for some $\alpha>0$
and we have the estimate
\begin{equation}\label{eq:Aeqh1}
	\norm{y}_{H^1_0(\Omega)}+\norm{y}_{C_0^{0,\alpha}(\Omega)}\le C\norm{f}_{W^{-1,q'}(\Omega)}.
\end{equation}
Here, $C$ and $\alpha$ do not depend on $f$.
\end{theorem}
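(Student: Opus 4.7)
The $H_0^1(\Omega)$ part of the estimate is essentially immediate from \cref{thm:existenceh1}: since $q' > n \ge 2$, there is a continuous embedding $W^{-1,q'}(\Omega) \embeds H^{-1}(\Omega)$, and \eqref{eq:h1bound} yields the first summand on the left of \eqref{eq:Aeqh1}. The task is thus to establish Hölder regularity up to the boundary together with the corresponding linear estimate. My plan is to reduce everything to the classical De Giorgi--Nash--Moser theory as packaged in Chapter~8 of Gilbarg--Trudinger; the operator $\AA$ is precisely the one considered there, and the regularity assumption on $\Omega$ (uniform exterior cone condition) matches the boundary hypotheses in that reference.

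Concretely, I would proceed as follows. First, represent $f \in W^{-1,q'}(\Omega)$ as $f = f_0 - \sum_{i=1}^n \partial_i f_i$ with $f_0, f_1, \dots, f_n \in L^{q'}(\Omega)$ and $\sum_{i=0}^n \norm{f_i}_{L^{q'}(\Omega)} \le C\, \norm{f}_{W^{-1,q'}(\Omega)}$. This puts the equation into the divergence form treated in \cite{GilbargTrudinger2001}. Second, apply the global $L^\infty$-bound for weak solutions (the global maximum principle / Moser-type bound, \cite[Theorem~8.16]{GilbargTrudinger2001}); since $q' > n$, the corresponding $L^p$ and $L^q$ indices fall in the admissible range, and we get
\begin{equation*}
  \norm{y}_{L^\infty(\Omega)} \le C \sum_{i=0}^n \norm{f_i}_{L^{q'}(\Omega)} \le C \, \norm{f}_{W^{-1,q'}(\Omega)}.
\end{equation*}
Third, combine the interior Hölder estimate \cite[Theorem~8.24]{GilbargTrudinger2001} with the boundary Hölder estimate \cite[Theorem~8.27 / 8.29]{GilbargTrudinger2001} (the latter is exactly where the uniform exterior cone condition enters) to obtain $y \in C^{0,\alpha}(\overline{\Omega})$ for some $\alpha > 0$ depending only on $\AA$, $\Omega$, and $n$, together with
\begin{equation*}
  \norm{y}_{C^{0,\alpha}(\overline{\Omega})}
  \le
  C \bigl( \norm{y}_{L^\infty(\Omega)} + \sum_{i=0}^n \norm{f_i}_{L^{q'}(\Omega)} \bigr).
\end{equation*}
Plugging in the $L^\infty$-bound yields the desired linear estimate. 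Finally, since $y \in H_0^1(\Omega) \cap C^{0,\alpha}(\overline{\Omega})$ and the trace on $\Gamma$ vanishes, the continuous extension of $y$ to $\overline{\Omega}$ is zero on $\Gamma$, so $y \in C_0^{0,\alpha}(\Omega)$.

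I do not expect any genuine mathematical difficulty here; the only care needed is to keep the dependence on $f$ linear despite the fact that several of the Moser-iteration-type estimates in the reference are stated in a form that also permits semilinear lower-order terms. Since our equation is linear and $y\mapsto y$ scales linearly with $f$, this can be taken care of by scaling, or alternatively by noting that all constants in the cited theorems depend only on $\AA$, $\Omega$, $n$, and $q'$, but not on the data. The only slightly subtle point is verifying that the boundary regularity theorem of Gilbarg--Trudinger applies under the uniform exterior cone condition rather than under a stronger assumption such as Lipschitz regularity; this is indeed the case, cf.\ the remarks on p.~205 in \cite{GilbargTrudinger2001} already referenced in connection with \eqref{eq:regularity_coefficients}.
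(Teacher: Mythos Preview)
Your proposal is correct and follows essentially the same route as the paper: both arguments reduce to the De~Giorgi--Nash--Moser theory of Chapter~8 in Gilbarg--Trudinger. The paper's proof is simply a two-line citation of \cite[Theorem~8.29]{GilbargTrudinger2001} for the global H\"older regularity (which already handles interior and boundary under the uniform exterior cone condition, so your separate invocation of Theorem~8.24 is unnecessary) and of \cite[Theorem~8.15]{GilbargTrudinger2001} for the norm estimate (you cite the closely related Theorem~8.16 instead).
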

\begin{proof}
The additional Hölder regularity follows from \cite[Theorem 8.29]{GilbargTrudinger2001}.
Together with \cite[Theorem 8.15]{GilbargTrudinger2001}, the norm estimate follows.
\end{proof}
Using this regularity,
we can define the solution operator $T \colon W^{-1,q'}(\Omega) \to C_0(\Omega)$
of \eqref{eq:Aeq}.
Note that $T$ is compact,
since
the Hölder space $C_0^{0,\alpha}(\Omega)$
is compactly embedded in $C_0(\Omega)$
by the Arzelà-Ascoli theorem.
Its adjoint operator $T\adjoint \colon \MM(\Omega) \to W_0^{1,q}(\Omega)$
is related to the adjoint equation of \eqref{eq:Aeq}.
\begin{theorem}\label{thm:existenceadjoint}
Let $q \in \parens[\big]{1, n/(n-1)}$ be given.
For every  $\nu \in \MM(\Omega)$, the equation
\[
\AA^\star p=\nu
\]
admits a unique very weak solution $p \in W_0^{1,q}(\Omega)$, i.e.
\begin{equation}\label{eq:adjveryweak}
	\dual{\AA z}{p}_{W_0^{1,q}(\Omega)}
	=
	\int_\Omega z \, \d\nu
	\qquad\forall z \in Z,
\end{equation}
where
\begin{equation}
	\label{eq:def_Z}
	Z := \set{
		z \in H_0^1(\Omega)
		\given
		\AA z \in W^{-1,q'}(\Omega)
	}.
\end{equation}
Here, $q' \in (n, \infty)$ is the conjugate exponent of $q$.
This solution fulfills the estimate
\begin{equation}\label{eq:Aeqwq1pm}
\norm{p}_{W_0^{1,q}(\Omega)}\le C\norm{\nu}_{\MM(\Omega)},
\end{equation}
with $C$ independent of $\nu$.
Finally, if $\nu_k \weaklystar \nu$,
we have $p_k \to p$ in $W_0^{1,q}(\Omega)$
for the corresponding solutions.
\end{theorem}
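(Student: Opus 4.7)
The natural approach is to realize the very weak solution as the image of $\nu$ under the Banach space adjoint of the compact operator $T$ from \cref{thm:hoelderreg}. By that theorem, $T \colon W^{-1,q'}(\Omega) \to C_0(\Omega)$ is continuous, and in fact compact by the compact embedding $C_0^{0,\alpha}(\Omega) \embeds C_0(\Omega)$. Since $W_0^{1,q}(\Omega)$ is reflexive, the Schauder adjoint
\[
	T\adjoint \colon \MM(\Omega) = C_0(\Omega)\dualspace \to W^{-1,q'}(\Omega)\dualspace = W_0^{1,q}(\Omega)
\]
is well-defined, continuous (with the same operator norm as $T$) and itself compact. I would set $p := T\adjoint \nu$; the norm bound \eqref{eq:Aeqwq1pm} is then immediate from \eqref{eq:Aeqh1}.

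To check that this $p$ fulfils \eqref{eq:adjveryweak}, I take $z \in Z$ and put $f := \AA z \in W^{-1,q'}(\Omega)$. Uniqueness in \cref{thm:hoelderreg} forces $T f = z$, so the adjoint identity yields
\[
	\dual{\AA z}{p}_{W_0^{1,q}(\Omega)}
	=
	\dual{f}{T\adjoint \nu}_{W_0^{1,q}(\Omega)}
	=
	\int_\Omega T f \, \d\nu
	=
	\int_\Omega z \, \d\nu.
\]
For uniqueness I use that, by \cref{thm:hoelderreg}, $T$ is a bijection from $W^{-1,q'}(\Omega)$ onto $Z$ with $\AA\rvert_Z$ as inverse, so $\set{\AA z \given z \in Z} = W^{-1,q'}(\Omega)$. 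Any two very weak solutions therefore differ by an element of $W_0^{1,q}(\Omega)$ that annihilates all of $W^{-1,q'}(\Omega)$ under the duality pairing, which must be zero.

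Finally, suppose $\nu_k \weaklystar \nu$ in $\MM(\Omega)$. By Banach--Steinhaus $\set{\nu_k}$ is bounded, hence so is $\set{p_k} = \set{T\adjoint \nu_k}$ in the reflexive space $W_0^{1,q}(\Omega)$. For each fixed $f \in W^{-1,q'}(\Omega)$, the function $T f$ lies in $C_0(\Omega)$, and the adjoint identity combined with the definition of weak-$\star$ convergence in $\MM(\Omega)$ gives $\dual{f}{p_k}_{W_0^{1,q}(\Omega)} \to \dual{f}{p}_{W_0^{1,q}(\Omega)}$, so $p_k \weakly p$. The only non-routine step is the upgrade from weak to norm convergence, which is the main reason compactness (not merely continuity) of $T$ was emphasised: from any subsequence of $\set{p_k}$ one extracts a further norm-convergent subsequence via compactness of $T\adjoint$, and the weak convergence just established identifies its limit as $p$, so the standard subsequence argument gives $p_k \to p$ in $W_0^{1,q}(\Omega)$.
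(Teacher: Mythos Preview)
Your proof is correct and follows essentially the same approach as the paper: define $p := T\adjoint \nu$, read off the very weak formulation and the norm bound from the adjoint identity, and obtain strong convergence from weak-$\star$ convergence by combining compactness of $T\adjoint$ (via Schauder) with a subsequence--subsequence argument. Your treatment of uniqueness (via surjectivity of $\AA$ from $Z$ onto $W^{-1,q'}(\Omega)$) is slightly more explicit than the paper's, which simply remarks that $p$ is ``uniquely determined by the properties of the adjoint operator''.
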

Note that $Z \subset C_0(\Omega)$
due to \cref{thm:hoelderreg},
hence, the right-hand side in \eqref{eq:adjveryweak} is well defined.
\begin{proof} 
For $\nu \in \MM(\Omega)$,
we define the function $p := T\adjoint \nu \in W_0^{1,q}(\Omega)$.
It is uniquely determined by the properties of the adjoint operator,
\begin{equation}\label{eq:adjsstar}
	\dual{f}{p}_{W_0^{1,q}(\Omega)}
	=
	\dual{\nu}{T f}_{\MM(\Omega),C_0(\Omega)}
	=
	\int_\Omega (T f) \, \d\nu
	\qquad
	\forall f \in W^{-1,q'}(\Omega).
\end{equation}
From this, we observe that \eqref{eq:Aeqwq1pm} holds.
Note that \eqref{eq:adjsstar} is equivalent
to the very weak formulation \eqref{eq:adjveryweak}.

In order to verify the compactness property,
we use that $T\adjoint$
maps weak-$\star$ convergent sequences to weak-$\star$ convergent sequences
(since it is an adjoint operator)
and
it maps bounded sequences to sequences possessing a strong accumulation point
(since it is compact by Schauder's theorem).
Hence,
if $\nu_k \weaklystar \nu$ in $\MM(\Omega)$,
every subsequence of $\seq{T\adjoint \nu_k}_{k \in \N}$
possesses a strong accumulation point
which has to coincide with $T\adjoint \nu$
due to $T\adjoint \nu_k \weaklystar T\adjoint \nu$ in $W_0^{1,q}(\Omega)$.
Hence, a subsequence-subsequence argument
shows $T\adjoint \nu_k \to T\adjoint \nu$ in $W_0^{1,q}(\Omega)$.
\end{proof}

We point out that using the density of $C_0(\Omega)$ in $W^{-1,q'}(\Omega)$,
it is also possible to use
\begin{equation*}
	\hat Z := \set{
		z \in H_0^1(\Omega)
		\given
		\AA z \in C_0(\Omega)
	},
\end{equation*}
as it was done in \cite{CasasMateosVexler2014}.
In particular, this shows that the very weak solution
of \eqref{eq:adjveryweak}
does not depend on the choice of the regularity exponent $q$.

Note that
$\AA z \in W^{-1,q'}(\Omega)$ in the definition of $Z$
means that
\begin{equation*}
	\abs[\big]{ \dual{\AA z}{v} }
	\le
	C
	\norm{v}_{W_0^{1,q}(\Omega)}
	\qquad
	\forall v \in H_0^1(\Omega).
\end{equation*}
Therefore, the functional $\AA z \in H^{-1}(\Omega)$
can be extended continuously to a functional from $W^{-1,q'}(\Omega)$.
Note that we cannot use integration by parts on the left-hand side
of \eqref{eq:adjveryweak}, since this would require
$p \in H_0^1(\Omega)$ or $z \in W_0^{1,q'}(\Omega)$.
This, however, may not hold under the low regularity \eqref{eq:regularity_coefficients}.
For a thorough discussion of the interpretation of the adjoint equation
in the case of coefficients with low regularity,
we refer to \cite{MeyerPanizziSchiela2011}.

Of course,
it is also possible to discuss the adjoint equation with
right-hand sides from $H^{-1}(\Omega)$,
i.e.,
\begin{equation*}
	p \in H_0^1(\Omega),
	\qquad
	\AA\adjoint p = \mu
	\quad\text{in } H^{-1}(\Omega).
\end{equation*}
Existence and uniqueness follows from Lax-Milgram.
Due to $Z \subset H_0^1(\Omega) \cap C_0(\Omega)$,
both notions of solutions coincide
if $\mu \in H^{-1}(\Omega) \cap \MM(\Omega)$.
Indeed,
for every $z \in Z$ and $p \in H_0^1(\Omega)$, we have
\begin{equation*}
	\dual{\AA z}{p}_{W_0^{1,q}(\Omega)}
	=
	\dual{\AA z}{p}_{H_0^1(\Omega)}
	\quad\text{and}\quad
	\int_\Omega z \, \d\mu
	=
	\dual{\mu}{z}_{H_0^1(\Omega)}
	.
\end{equation*}
Thus, we can define a
very weak solution $p \in W_0^{1,q}(\Omega)$
of
\begin{equation*}
	\AA\adjoint p = \mu + \nu
\end{equation*}
for $\mu \in H^{-1}(\Omega)$
and $\nu \in \MM(\Omega)$
via
\begin{equation}\label{eq:adjveryweak2}
	\dual{\AA z}{p}_{W_0^{1,q}(\Omega)}
	=
	\dual{\mu}{z}_{H_0^1(\Omega)}
	+
	\int_\Omega z \, \d\nu
	\qquad\forall z \in Z,
\end{equation}
and this solution does not depend
on the precise splitting of $\mu + \nu$
into $\mu \in H^{-1}(\Omega)$
and $\nu \in \MM(\Omega)$.

Let us consider further properties and auxiliary results, starting with
smooth multipliers for the space $Z$.
\begin{lemma}
	\label{lem:smooth_multiplier}
	Let $z \in H_0^1(\Omega)$ be given such that
	$\AA z \in W^{-1,q'}(\Omega)$ for some $q' > n$
	with $q' < \infty$ in case $n = 2$ and $q' \le 6$ in case $n = 3$.
	Then, for all $\psi \in C^\infty(\Omega)$
	we have
	$\psi z \in H_0^1(\Omega)$
	and
	$\AA( \psi z ) \in W^{-1,q'}(\Omega)$.
\end{lemma}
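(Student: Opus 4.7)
The plan is to split the assertion into its two parts and reduce the non-trivial one to a commutator identity.

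First, I would dispatch $\psi z \in H_0^1(\Omega)$ directly. Since $\psi$ and its first derivatives are bounded on $\Omega$, the product rule yields $\nabla(\psi z) = \psi\,\nabla z + z\,\nabla\psi \in L^2(\Omega;\R^n)$, and the vanishing boundary trace of $z$ is inherited by $\psi z$.

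For the claim $\AA(\psi z) \in W^{-1,q'}(\Omega)$ the plan is to derive a commutator identity that isolates $\AA z$ applied to the shifted test function $\psi v$. Expanding $\dual{\AA(\psi z)}{v}$ and $\dual{\AA z}{\psi v}$ via \eqref{eq:diff_op} for $v \in H_0^1(\Omega)$ and comparing term by term, all second-order contributions (those involving $\nabla v^\top A \nabla z$) as well as the $b$-, $c$-, and $d$-terms that pair $\psi v$ with $\psi z$ cancel, leaving
\begin{equation*}
\dual{\AA(\psi z)}{v}
=
\dual{\AA z}{\psi v}
+
\int_\Omega \bigl( z\, \nabla v^\top A\, \nabla\psi - v\, \nabla\psi^\top A\, \nabla z + vz\,(c-b)^\top\nabla\psi \bigr) \, \d x.
\end{equation*}

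It then remains to bound the right-hand side by $\norm{v}_{W_0^{1,q}(\Omega)}$, where $q$ denotes the conjugate exponent of $q'$. For the first term, the assumption $\AA z \in W^{-1,q'}(\Omega)$ together with the continuity of the map $v \mapsto \psi v$ on $W_0^{1,q}(\Omega)$ (another product rule using boundedness of $\psi$ and $\nabla\psi$) gives the desired estimate. The three remainder integrals are controlled by H\"older's inequality using boundedness of $A$, $b$, $c$, and $\nabla\psi$, together with the Sobolev embeddings $H_0^1(\Omega) \embeds L^{q'}(\Omega)$ (to pair the factor $z$ with $\nabla v \in L^q(\Omega)$, and to handle the zero-order remainder) and $W_0^{1,q}(\Omega) \embeds L^2(\Omega)$ (to pair $v$ with $\nabla z \in L^2(\Omega)$). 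Once this is established, the functional extends by density from $H_0^1(\Omega)$ to all of $W_0^{1,q}(\Omega)$.

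The main obstacle is checking that the two Sobolev embeddings actually hold under the stated restrictions on $q'$. In dimension $n=2$ both are valid for every $q'<\infty$, so the lemma is immediate. In dimension $n=3$, however, the embedding $W_0^{1,q}(\Omega) \embeds L^2(\Omega)$ requires $q \ge 6/5$, which is exactly $q' \le 6$; simultaneously $H_0^1(\Omega) \embeds L^{q'}(\Omega)$ forces the identical restriction $q' \le 6$. This is the sole place where the dimension-dependent hypothesis enters, and it appears naturally from the remainder terms of the commutator rather than from the leading part.
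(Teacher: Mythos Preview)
Your proof is correct and follows essentially the same route as the paper: both derive the identical commutator identity for $\dual{\AA(\psi z)}{v} - \dual{\AA z}{\psi v}$ and then bound the remainder via Sobolev embeddings, with the restriction $q' \le 6$ in dimension $n=3$ entering through $W_0^{1,q}(\Omega) \embeds L^2(\Omega)$. The only minor variation is that the paper first invokes \cref{thm:hoelderreg} to get $z \in L^\infty(\Omega)$ and pairs $z$ with $\nabla v \in L^1(\Omega)$, whereas you use $H_0^1(\Omega) \embeds L^{q'}(\Omega)$ directly and pair $z \in L^{q'}$ with $\nabla v \in L^q$; both estimates yield the same constraint on $q'$.
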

\begin{proof}
	\cref{thm:hoelderreg} implies $z \in L^\infty(\Omega)$.
	Further, we already know that
	the linear functional
	$v \mapsto \dual{\AA z}{\psi v}$
	belongs to $W^{-1,q'}(\Omega)$.
	Now we consider
	\begin{align*}
		&
		\dual{\AA (\psi z)}{v}
		-
		\dual{\AA z}{\psi v}
		\\&\qquad
		=
		\int_\Omega
		\nabla v^\top A \nabla (\psi z)
		+
		(\psi z) \, \parens{b^\top \nabla v}
		+
		v \, \parens{c^\top \nabla (\psi z)}
		+
		d \, (\psi z) v
		\,
		\d x
		\\&\qquad\qquad
		-
		\int_\Omega
		\nabla (\psi v)^\top A \nabla z
		+
		z \, \parens{b^\top \nabla (\psi v)}
		+
		(\psi v) \, \parens{c^\top \nabla z}
		+
		d z \, (\psi v)
		\,
		\d x
		\\&\qquad
		=
		\int_\Omega
		z \nabla v^\top A \nabla \psi - v \nabla\psi^\top A \nabla z
		-
		z v \, (b^\top \nabla \psi)
		+
		v z \, (c^\top \nabla \psi)
		\,\d x
		.
	\end{align*}
	This yields the bound
	\begin{equation*}
		\abs[\big]{
			\dual{\AA (\psi z)}{v}
			-
			\dual{\AA z}{\psi v}
		}
		\le
		C
		\parens[\big]{
			\norm{\nabla v}_{L^1(\Omega)}
			+
			\norm{v}_{L^2(\Omega)}
			+
			\norm{v}_{L^1(\Omega)}
			+
			\norm{v}_{L^1(\Omega)}
		}
		.
	\end{equation*}
	The Sobolev embedding theorem yields
	$W_0^{1,s}(\Omega) \embeds L^2(\Omega)$
	for $1/s = 1/2 + 1/n$, i.e., $s = 2 n/(n + 2)$.
	Thus, $s = 6/5$ for $n = 3$.
	Due to the assumption on $q'$,
	we have $q' \le s'$, and
	therefore
	$\AA (\psi z) \in W^{-1,q'}(\Omega)$.
\end{proof}

\begin{lemma}\label{lem:hinfboundu}
	Let subsets $U,V\subset\Omega$ be given,
	such that $U$ is compact, $V$ is open
	and $U \subset V$.
	Let $f\in H^{-1}(\Omega)$ and $\varphi\in C^\infty(\mathbb{R}^n)$
	with $\varphi_{|V}=0$. Then the weak solution $y\in H_0^1(\Omega)$ of 
\[
\AA y=\varphi f
\] 
is continuous in a neighborhood of $U$ and fulfills
\begin{equation}%\label{eq:Aeqh1l}
\norm{y}_{C(U)}\le C\norm{f}_{H^{-1}(\Omega) }.
\end{equation}
Moreover, the mapping $H^{-1}(\Omega) \ni f \mapsto y \in C(U)$ is compact.
\end{lemma}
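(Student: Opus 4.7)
My plan is to reduce the statement to \cref{thm:hoelderreg} by a cut-off argument that exploits the cancellation $\varphi \psi \equiv 0$: if $\psi$ is any smooth bump supported inside $V$, then $\varphi \psi \equiv 0$ (because $\varphi$ vanishes on $V$), and this annihilates the irregular right-hand side from the equation governing $\psi y$, leaving only commutator terms involving derivatives of $\psi$.

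Concretely, I fix an open set $W$ with $U \subset W$, $\cl(W)$ compact and contained in $V$, and pick $\psi \in C_c^\infty(\R^n)$ with $\supp \psi \subset V$ and $\psi \equiv 1$ on $W$. The commutator computation from the proof of \cref{lem:smooth_multiplier}, applied now with $z = y$, yields
\begin{equation*}
	\dual{\AA(\psi y)}{v}
	-
	\dual{\AA y}{\psi v}
	=
	\int_\Omega
	y \nabla v^\top A \nabla \psi
	-
	v \nabla \psi^\top A \nabla y
	-
	y v \, (b^\top \nabla \psi)
	+
	v y \, (c^\top \nabla \psi)
	\, \d x
\end{equation*}
for all $v \in H_0^1(\Omega)$. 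The term $\dual{\AA y}{\psi v}$ equals $\dual{f}{\varphi \psi v} = 0$ by the cancellation, so only the integral on the right survives. Note that \cref{lem:smooth_multiplier} as stated uses $z \in L^\infty(\Omega)$, but that hypothesis was invoked only to estimate $\dual{\AA z}{\psi v}$, which vanishes here. Reading through the remaining four integrals with $y \in H_0^1(\Omega)$ and the Sobolev embeddings then gives
\begin{equation*}
	\norm{\AA(\psi y)}_{W^{-1,q'}(\Omega)}
	\le
	C \norm{y}_{H_0^1(\Omega)}
	\le
	C \norm{\varphi f}_{H^{-1}(\Omega)}
	\le
	C \norm{f}_{H^{-1}(\Omega)}
\end{equation*}
for some $q' > n$ (with $q' \le 6$ when $n = 3$), the last step using boundedness of the multiplication operator $f \mapsto \varphi f$ on $H^{-1}(\Omega)$.

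Applying \cref{thm:hoelderreg} to $\psi y$ then delivers $\psi y \in C_0^{0,\alpha}(\Omega)$ with the same bound. Since $\psi \equiv 1$ on $W \supset U$, $y$ agrees with $\psi y$ throughout $W$, which simultaneously yields the continuity of $y$ in a neighborhood of $U$ and $\norm{y}_{C(U)} \le C \norm{f}_{H^{-1}(\Omega)}$. Compactness of $H^{-1}(\Omega) \ni f \mapsto y \in C(U)$ then follows by restriction, since the map $f \mapsto \psi y$ factors as the continuous linear maps $H^{-1}(\Omega) \to H_0^1(\Omega) \to W^{-1,q'}(\Omega) \to C_0^{0,\alpha}(\Omega)$ composed with the compact Arzelà--Ascoli embedding $C_0^{0,\alpha}(\Omega) \embeds C_0(\Omega)$.

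The main technical obstacle is the Sobolev-exponent bookkeeping in the second paragraph: it mirrors the estimate in \cref{lem:smooth_multiplier} but must be carried out with only $y \in H_0^1(\Omega)$ in place of $z \in L^\infty(\Omega)$, which is precisely what the cancellation $\varphi \psi \equiv 0$ makes possible.
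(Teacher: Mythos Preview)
Your proof is correct and follows essentially the same cut-off/commutator route as the paper: choose $\psi$ supported in $V$ and equal to $1$ near $U$, use $\varphi\psi\equiv 0$ to kill $\dual{\AA y}{\psi v}$, estimate the commutator in $W^{-1,q'}$ via Sobolev embeddings, and invoke \cref{thm:hoelderreg} and Arzelà--Ascoli. One small expository slip: in \cref{lem:smooth_multiplier} the bound $z\in L^\infty$ is actually used in the commutator estimate (the term $z\,\nabla v^\top A\nabla\psi$), not in $\dual{\AA z}{\psi v}$; the point---which you state correctly in the next sentence---is that with $y\in H_0^1(\Omega)\hookrightarrow L^6(\Omega)$ the same four integrals can still be controlled in $W^{-1,6}(\Omega)$, exactly as the paper does.
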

\begin{proof}
 Choose $\psi\in C_c^{\infty}(\Omega)$ with $\psi_{|_{U}}=1$, and $\psi_{|_{\Omega\setminus V}}=0$.
Then the product of $\varphi$ and $\psi$ vanishes,  and we observe
\begin{equation*}
\dual{\AA (\psi y)}{v}
		=
		\dual{\AA (\psi y)}{v}
		-
		\dual{\varphi y}{\psi v}
		=
		\dual{\AA (\psi y)}{v}
		-
		\dual{\AA y}{\psi v}.
\end{equation*}
Similar to the proof of \cref{lem:smooth_multiplier} 
we therefore obtain
	\begin{align*}
		\dual{\AA (\psi y)}{v}
		&=
		\int_\Omega
		y \nabla v^\top A \nabla \psi - v \nabla\psi^\top A \nabla y
		-
		y v \, (b^\top \nabla \psi)
		+
		v y \, (c^\top \nabla \psi)
		\,\d x
		\\&\quad
		\le
		C\norm{y}_{L^6(\Omega)}\norm{\nabla v}_{L^{6/5}(\Omega)}
		+
		C\norm{\nabla y}_{L^2(\Omega)}\norm{v}_{L^{2}(\Omega)}
		+
		C\norm{y}_{L^6(\Omega)}\norm{v}_{L^{6/5}(\Omega)},
		\\&\quad
		\le
		C\norm{y}_{H_0^{1}(\Omega)}\norm{v}_{W^{1,6/5}(\Omega)},
		%\\&\quad
		%\le
		%C\norm{f}_{H^{-1}(\Omega)}\norm{v}_{W^{1,6/5}(\Omega)}
	\end{align*}
	where we have used the embeddings $H_0^1(\Omega)\embeds L^6(\Omega)$ for $y$ as well as $W_0^{1,6/5}(\Omega) \embeds L^2(\Omega)$ in the case $n\le3$ as before.
With \cref{thm:existenceh1} we deduce
\begin{equation*}
\dual{\AA (\psi y)}{v}\le C\norm{f}_{H^{-1}(\Omega)}\norm{v}_{W^{1,6/5}(\Omega)},
\end{equation*}
hence
$\norm{\AA(\psi y)}_{W^{-1,6}(\Omega)}\le C\norm{f}_{H^{-1}(\Omega)}$. Finally, this yields
\begin{equation*}
\norm{\psi y}_{C_0^{0,\alpha}(\Omega)}\le C\norm{f}_{H^{-1}(\Omega)}
\end{equation*}
by \cref{thm:hoelderreg}. 
Note that due to the compact embedding of $C_0^{0,\alpha}(\Omega)\embeds C_0(\Omega)$ the mapping $f\mapsto \psi y$ is compact from $H^{-1}(\Omega)$ into $C_0(\Omega),$
which concludes the proof.
\end{proof}
By a duality argument,
we obtain a regularity result for the adjoint equation.
\begin{lemma}\label{lem:bound_adjoint}
	Let subsets $U,V\subset\Omega$ be given,
	such that $U$ is compact, $V$ is open
	and $U \subset V$.
Let $\mu\in \MM(\Omega)$
with $\supp(\mu) \subset U$
and $\varphi\in C^\infty(\mathbb{R}^n)$ with and $\varphi_{|V}=0$.
Then the very weak solution $p\in W_0^{1,q}(\Omega)$ of 
\[
\AA^\star p = \mu
\] 
fulfills
\begin{equation}\label{eq:bound_adjoint}
	\norm{\varphi p}_{H_0^1(\Omega)}\le C\norm{\mu}_{\MM(\Omega) }.
\end{equation}
Moreover, if $\mu_k\weaklystar\mu$, then $\varphi p_k\to \varphi p$ in $H_0^1(\Omega)$,
where $p_k$ is the very weak solution for the right-hand side $\mu_k$.
\end{lemma}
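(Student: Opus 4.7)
The plan is to dualise \cref{lem:hinfboundu}: the condition $\supp(\mu) \subset U$ on the adjoint source $\mu$ corresponds, via the very weak formulation, to a primal right-hand side of the form $\varphi f$ (vanishing on $V \supset U$), a situation for which \cref{lem:hinfboundu} already supplies both boundedness and compactness.

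I would fix $q \in (1, n/(n-1))$ as in \cref{thm:existenceadjoint}, so that $q' > n$, and consider an arbitrary $f \in W^{-1,q'}(\Omega)$. Since $\varphi$ is smooth and bounded, $\varphi f \in W^{-1,q'}(\Omega)$; by \cref{thm:hoelderreg} the unique solution $y \in H_0^1(\Omega) \cap C_0(\Omega)$ of $\AA y = \varphi f$ belongs to $Z$. Testing the very weak formulation \eqref{eq:adjveryweak} for $p$ against this $y \in Z$, and using the defining identity $\dual{\varphi f}{p}_{W_0^{1,q}(\Omega)} = \dual{f}{\varphi p}_{W_0^{1,q}(\Omega)}$ of multiplication by $\varphi$, gives
\begin{equation*}
    \dual{f}{\varphi p}_{W_0^{1,q}(\Omega)}
    = \dual{\AA y}{p}_{W_0^{1,q}(\Omega)}
    = \int_\Omega y \, \d\mu.
\end{equation*}
Since $\supp(\mu) \subset U$, the right-hand side is bounded by $\norm{y}_{C(U)} \, \norm{\mu}_{\MM(\Omega)}$, and \cref{lem:hinfboundu} furnishes $\norm{y}_{C(U)} \le C \, \norm{f}_{H^{-1}(\Omega)}$. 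Hence $f \mapsto \dual{f}{\varphi p}$ is a linear functional on $W^{-1,q'}(\Omega)$ that is continuous with respect to the (weaker) $H^{-1}(\Omega)$-norm, with operator norm at most $C \norm{\mu}_{\MM(\Omega)}$.

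Next, since $W^{-1,q'}(\Omega)$ is dense in $H^{-1}(\Omega)$, this functional extends uniquely to a bounded linear functional on $H^{-1}(\Omega)$; via Riesz representation ($H^{-1}(\Omega)\dualspace = H_0^1(\Omega)$) it is represented by some $\tilde p \in H_0^1(\Omega)$ with $\norm{\tilde p}_{H_0^1(\Omega)} \le C \norm{\mu}_{\MM(\Omega)}$, and testing the resulting identification against $f \in C_c^\infty(\Omega)$ identifies $\tilde p$ with $\varphi p$ in $W_0^{1,q}(\Omega)$. This yields \eqref{eq:bound_adjoint}.

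For the final assertion—under the natural assumption that the $\mu_k$ share the support condition $\supp(\mu_k) \subset U$—I would replay the above duality for $\mu_k - \mu$ and $p_k - p$: writing $y_f := T(\varphi f)$, one obtains, for every $f \in W^{-1,q'}(\Omega)$,
\begin{equation*}
    \dual{f}{\varphi(p_k - p)}_{W_0^{1,q}(\Omega)}
    = \int_\Omega y_f \, \d(\mu_k - \mu).
\end{equation*}
The compactness clause of \cref{lem:hinfboundu} ensures that $\set{y_f \given \norm{f}_{H^{-1}(\Omega)} \le 1}$ is relatively compact in $C_0(\Omega)$, and weak-$\star$ convergence $\mu_k - \mu \weaklystar 0$ in $\MM(\Omega)$ is uniform on such compact sets, so the supremum of the right-hand side over this unit ball of $f$'s tends to zero; density of $W^{-1,q'}(\Omega)$ in $H^{-1}(\Omega)$ then gives $\norm{\varphi(p_k - p)}_{H_0^1(\Omega)} \to 0$. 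The main obstacle is the careful bookkeeping of the two dualities—in particular, matching the abstract Riesz representative $\tilde p \in H_0^1(\Omega)$ from the $H^{-1}$-extension with $\varphi p$ viewed as an element of $W_0^{1,q}(\Omega)$—but once the key identity $\dual{f}{\varphi p}_{W_0^{1,q}} = \int y \, \d\mu$ is in place, the remaining steps are routine.
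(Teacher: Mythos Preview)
Your proposal is correct and follows essentially the same duality approach as the paper: test the very weak formulation with the solution of $\AA y = \varphi f$ and invoke \cref{lem:hinfboundu}; for the compactness clause the paper packages the argument via Schauder's theorem (identifying $\mu \mapsto \varphi p$ as the adjoint of the compact map $f \mapsto y|_U$ from \cref{lem:hinfboundu}, then arguing as in the proof of \cref{thm:existenceadjoint}), whereas you argue directly that weak-$\star$ convergence is uniform on compacta. One small slip: the set $\set{y_f \given \norm{f}_{H^{-1}(\Omega)} \le 1}$ is precompact in $C(U)$, not in $C_0(\Omega)$, but since $\supp(\mu_k-\mu)\subset U$ that is all your argument needs.
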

\begin{proof}
We test the very weak formulation with the solution
$y\in H_0^1(\Omega)\cap C^{0,\alpha}_0(\Omega)$ of $\AA y=\varphi f$ for $f\in L^2(\Omega),$
and obtain
\begin{equation*}
	\int_\Omega p \varphi f \, \d x
	=
	\int_\Omega p \, (\AA y) \, \d x=\int_\Omega y \, \d\mu\le
	\norm{y}_{C(U)}\norm{\mu}_{\MM(\Omega)}%\\
	%&\le&
	%c\norm{\mu}_{\MM(\Omega)}\norm{f}_{H^{-1}(\Omega)}
	.
\end{equation*}
Note that the last estimate uses $\supp(\mu) \subset U$.
Applying \cref{lem:hinfboundu} yields
\[
\int_\Omega p \varphi f \, \d x\le C\norm{f}_{H^{-1}(\Omega)}\norm{\mu}_{\MM(\Omega)}.
\]
Since $f \in L^2(\Omega)$ was arbitrary,
this yields $\norm{\varphi p}_{H_0^1(\Omega)}\le C\norm{\mu}_{\MM(\Omega)}$. 

It remains to verify the compactness property.
As in the beginning of the proof,
we have
\begin{equation*}
	\dual{f}{\varphi \, p}_{H_0^1(\Omega)}
	=
	\dual{\mu}{y}_{C(U)}
\end{equation*}
for all $f \in L^2(\Omega)$, $\mu \in \MM(U)$,
where $\AA y = \varphi f$ and $\AA\adjoint p = \mu$ (in the very weak sense).
Using the density of $L^2(\Omega)$
in $H^{-1}(\Omega)$ and \cref{lem:hinfboundu},
this equation extends to all $f \in H^{-1}(\Omega)$.
Therefore, the mapping $\MM(U) \ni \mu \mapsto \varphi \, p \in H_0^1(\Omega)$
is the adjoint of the mapping $H^{-1}(\Omega) \ni f \mapsto y \in C(U)$
from \cref{lem:hinfboundu}.
Now, we can argue as in the proof of \cref{thm:existenceadjoint}.
\end{proof}

\subsection{Solution operator of the obstacle problem}
\label{subsec:solution_operator_obstacle}
In this section, we give some properties of the solution operator
of the variational inequality (VI)
\begin{equation}
	\label{eq:VI:unreg}\tag{$\mathbf{VI}$}
	\text{Find } y\in K \quad \text{such that } \dual{\AA y-u}{v-y} \ge 0\quad\forall v\in K 
\end{equation}
which appears as a constraint in \eqref{eq:P:unreg}.
Here
\begin{equation*}
	K :=\set{v\in H_0^1(\Omega) \given v\ge y_a\quad \text{a.e.~in }\Omega }.
\end{equation*}
We assume the same regularity of $\AA$ and $\Omega$
as in the previous section.
We further suppose that $y_a \le 0$ on $\Gamma$ in the sense of $H^1(\Omega)$
and this implies $K \ne \emptyset$.
First of all,
it is well-known
that
this VI
admits a unique solution $y \in H_0^1(\Omega)$
for each $u \in H^{-1}(\Omega)$,
see \cite[Theorem~II.2.1]{KinderlehrerStampacchia1980}
or \cite[Theorem~4.4]{Troianiello1987}.
The solution operator is denoted by
\[
	S\colon H^{-1}(\Omega)\to H_0^1(\Omega),\quad u\mapsto y
	.
\]
It is known that \eqref{eq:VI:unreg}
is equivalent to the existence of $\xi\in H^{-1}(\Omega)$ such that
\[
	\AA y=u-\xi, \quad
	\xi \in \NN_K(y)
	.
\]
Here, $\NN_K(y)$ is the normal cone of the convex set $K$.

Next, we address Hölder regularity of the solutions.
\begin{theorem}
	\label{thm:solution_VI}
	We assume
	that the obstacle $y_a$
	satisfies $y_a \in H^1(\Omega)$
	and
	$\AA y_a \in L^2(\Omega)$.
	Then, for any $u \in L^2(\Omega)$,
	we have $\AA y, \xi \in L^2(\Omega)$,
	where $y := S(u)$, $\xi := u - \AA y$.
	Moreover,
	$S \colon L^2(\Omega) \to C_0^{0,\alpha}(\Omega)$
	is continuous
	for some $\alpha \in (0,1)$.
\end{theorem}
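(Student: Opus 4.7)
Both assertions reduce to a single key step, namely upgrading the regularity of the obstacle multiplier $\xi = u - \AA y$ from $H^{-1}(\Omega)$ to $L^2(\Omega)$. Once $\xi \in L^2(\Omega)$ is established, the H\"older bound follows immediately from \cref{thm:hoelderreg}, and the continuity of $S$ will be obtained by chaining the Lipschitz continuity of the VI-solution operator on $H^{-1}(\Omega)\to H_0^1(\Omega)$ with a compactness argument based on the H\"older bound.

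For the first step I would invoke the classical Lewy--Stampacchia inequality, which in the sign convention of the paper reads
\begin{equation*}
	0 \le -\xi \le (\AA y_a - u)^+ \quad \text{a.e.\ in } \Omega.
\end{equation*}
I would prove this by penalization: approximate $y$ by $y_\varepsilon \in H_0^1(\Omega)$ solving $\AA y_\varepsilon = u + \frac{1}{\varepsilon}(y_a - y_\varepsilon)^+$, establish the pointwise bound $\frac{1}{\varepsilon}(y_a - y_\varepsilon)^+ \le (\AA y_a - u)^+$ by testing the equation satisfied by $\frac{1}{\varepsilon}(y_a - y_\varepsilon)^+ - (\AA y_a - u)^+$ with its positive part and exploiting coercivity of $\AA$, and then pass to the limit $\varepsilon \to 0$. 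The argument carries over to our non-symmetric operator with merely measurable coefficients; alternatively one may cite it directly from \cite{Troianiello1987}. Under the assumption $u, \AA y_a \in L^2(\Omega)$, this yields $\xi \in L^2(\Omega)$ with $\norm{\xi}_{L^2(\Omega)} \le \norm{\AA y_a}_{L^2(\Omega)} + \norm{u}_{L^2(\Omega)}$, and hence $\AA y = u - \xi \in L^2(\Omega)$.

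The H\"older bound then follows at once: for $n \in \{2,3\}$, the Sobolev embedding $W^{1,q}_0(\Omega) \embeds L^2(\Omega)$ (valid with $q = 6/5$ for $n=3$ and with any $q \in [1,2)$ for $n=2$) dualizes to $L^2(\Omega) \embeds W^{-1,q'}(\Omega)$ for some $q' > n$, so \cref{thm:hoelderreg} applied to $\AA y = u - \xi$ delivers $y \in C_0^{0,\alpha}(\Omega)$ with a bound in $\norm{u}_{L^2(\Omega)}$ and $\norm{\AA y_a}_{L^2(\Omega)}$. For the continuity statement I would argue by compactness, possibly at the cost of shrinking $\alpha$. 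Given $u_k \to u$ in $L^2(\Omega)$, the Lipschitz continuity $S \colon H^{-1}(\Omega) \to H_0^1(\Omega)$ yields $y_k := S(u_k) \to y := S(u)$ in $H_0^1(\Omega)$, while the previous step keeps $(y_k)$ bounded in $C_0^{0,\alpha}(\Omega)$. Since the embedding $C_0^{0,\alpha}(\Omega) \embeds C_0^{0,\alpha'}(\Omega)$ is compact for every $\alpha' < \alpha$ (by Arzel\`a--Ascoli combined with interpolation of H\"older seminorms), a subsequence--subsequence argument together with uniqueness of the $H_0^1$-limit yields $y_k \to y$ in $C_0^{0,\alpha'}(\Omega)$, and relabelling $\alpha'$ as $\alpha$ finishes the proof. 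The main technical obstacle will be the rigorous verification of the Lewy--Stampacchia inequality in the present low-regularity, non-symmetric setting --- without symmetry of $\AA$ one cannot proceed via purely variational arguments, but the penalization scheme sketched above still goes through; beyond that, the proof is essentially a chaining of the results from \cref{subsec:sol_eq}.
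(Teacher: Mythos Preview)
Your proposal is correct and the core step---the Lewy--Stampacchia bound $0 \le -\xi \le (\AA y_a - u)^+$, equivalently $u \le \AA y \le \max\{\AA y_a, u\}$---is exactly what the paper invokes (citing \cite[Theorem~4.32]{Troianiello1987} directly rather than redoing the penalization). The only genuine difference is in the continuity argument. The paper works on the right-hand side: from $u_k \to u$ in $L^2(\Omega)$ it obtains $\AA y_k$ bounded in $L^2(\Omega)$ and convergent in $H^{-1}(\Omega)$, hence weakly convergent in $L^2(\Omega)$; then it uses that the embedding $L^2(\Omega) \embeds W^{-1,5}(\Omega)$ is compact (by Schauder, as the adjoint of the compact embedding $W_0^{1,5/4}(\Omega) \embeds L^2(\Omega)$) to upgrade to strong convergence $\AA y_k \to \AA y$ in $W^{-1,5}(\Omega)$, after which the continuous linear map $\AA^{-1} \colon W^{-1,5}(\Omega) \to C_0^{0,\alpha}(\Omega)$ finishes the job without any loss in $\alpha$. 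Your route works on the state side, using boundedness of $y_k$ in $C_0^{0,\alpha}(\Omega)$ and the compact embedding into $C_0^{0,\alpha'}(\Omega)$; this is slightly more elementary (no Schauder) but costs you the passage from $\alpha$ to $\alpha'$. Since the theorem only asks for \emph{some} $\alpha$, both arguments are equally valid.
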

\begin{proof}
	We can apply \cite[Theorem~4.32]{Troianiello1987}
	and obtain
	the pointwise a.e.\ inequality
	$
	u
	\le
	\AA y
	\le
	\max\set{\AA y_a, u}
	$,
	where $u \in L^2(\Omega)$ is arbitrary
	and $y = S(u)$.
	This implies
	\begin{equation*}
		\norm{\AA y}_{L^2(\Omega)}
		\le
		\norm{u}_{L^2(\Omega)} + \norm{\AA y_a}_{L^2(\Omega)},
	\end{equation*}
	i.e.,
	$u \mapsto \AA y$ is bounded from $L^2(\Omega)$ to $L^2(\Omega)$.

	Now, let a sequence with $u_k \to u$ in $L^2(\Omega)$
	be given.
	We set $y_k := S(u_k)$.
	Since $\AA y_k$ is bounded in $L^2(\Omega)$
	and converges in $H^{-1}(\Omega)$
	to $\AA y$,
	we get
	$\AA y_k \weakly \AA y$
	in $L^2(\Omega)$.

	Since the embedding from $W_0^{1,5/4}(\Omega)$
	into $L^2(\Omega)$
	is compact
	(\cite[Theorem~7.22]{GilbargTrudinger2001}),
	the adjoint embedding
	from $L^2(\Omega)$
	into
	$W^{-1,5}(\Omega) = W_0^{1,5/4}(\Omega)^\star$
	is compact as well (by Schauder's theorem).
	Hence,
	$\AA y_k \to \AA y$
	in $W^{-1,5}(\Omega)$.
	Finally,
	$\AA^{-1}$ is continuous
	from
	$W^{-1,5}(\Omega)$
	to
	$C_0^{0,\alpha}(\Omega)$,
	see \cref{thm:hoelderreg}.
\end{proof}
Using the standard truncation idea due to Stampacchia,
we can also show the Lipschitz continuity of $S$
w.r.t.\ a weaker norm on the forces $u$.
\begin{lemma}
	\label{lem:lipschitz_with_w1p}
	Let $q' > n$ be given.
	Then, there exists $C > 0$
	such that
	\begin{equation*}
		\norm{S(u_1) - S(u_2)}_{L^\infty(\Omega)}
		\le
		C \norm{u_1 - u_2}_{W^{-1,q'}(\Omega)}
		\qquad\forall
		u_1, u_2 \in W^{-1,q'}(\Omega).
	\end{equation*}
\end{lemma}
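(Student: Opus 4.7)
I would argue via a Stampacchia-type truncation applied to the difference of the two VI solutions. Set $y_i := S(u_i)$ and $w := y_1 - y_2$. For every $k \ge 0$, consider the test functions $v_1 := y_1 - (w-k)^+ = \min(y_1, y_2 + k)$ for the VI satisfied by $y_1$ and $v_2 := y_2 + (w-k)^+$ for the VI satisfied by $y_2$. Both lie in $K$: clearly $v_2 \ge y_2 \ge y_a$, whereas for $v_1$ we note that on $\set{w \le k}$ it equals $y_1 \ge y_a$, and on $\set{w > k}$ it equals $y_2 + k \ge y_a + k \ge y_a$ because $k \ge 0$. Plugging $v_i$ into $\dual{\AA y_i - u_i}{v - y_i} \ge 0$ and adding the resulting two inequalities yields the truncation estimate
\[
	\dual{\AA w - (u_1 - u_2)}{(w-k)^+} \le 0 \qquad \forall k \ge 0.
\]

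This is precisely the statement that $w$ is a subsolution, in the Stampacchia sense, of the linear equation $\AA z = u_1 - u_2$. Since $u_1 - u_2 \in W^{-1,q'}(\Omega)$ with $q' > n$, the classical $L^\infty$-bound for subsolutions of strictly elliptic equations with $L^\infty$-coefficients and coercive bilinear form --- essentially the one-sided version of the estimate behind \cref{thm:hoelderreg} --- gives
\[
	\norm{w^+}_{L^\infty(\Omega)} \le C \norm{u_1 - u_2}_{W^{-1,q'}(\Omega)},
\]
with $C$ depending only on $\AA$, $\Omega$, and $q'$. Swapping the roles of $u_1$ and $u_2$ yields the matching bound on $w^- = (y_2 - y_1)^+$, and the two estimates together prove the claim.

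The only delicate point is that no sign is imposed on the zero-order coefficient $d$ of $\AA$, so a direct maximum-principle argument is not available. Instead, one runs the standard Stampacchia iteration on the distribution function $\mu(k) := \abs{\set{w > k}}$: the coercivity \eqref{eq:coercivity} controls $\norm{(w-k)^+}_{H_0^1(\Omega)}$ from the truncation inequality, Sobolev embedding converts this into a decay estimate for $\mu(h)$ with $h > k$, and the lower-order contributions produced by the constant shift $k$ (which involve the coefficients $b$ and $d$) are absorbed into the $H_0^1$-term via Young's inequality. This is precisely the mechanism underpinning \cref{thm:hoelderreg}; consequently one may simply invoke the relevant linear-theory result (e.g., Gilbarg--Trudinger, Theorems~8.15--8.16) once the truncation inequality above is in hand.
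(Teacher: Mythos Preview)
Your argument is correct and follows essentially the same route as the paper: the same truncation test functions $y_1 - (w-k)^+$ and $y_2 + (w-k)^+$ yield the same inequality $\dual{\AA w}{(w-k)^+} \le \dual{u_1-u_2}{(w-k)^+}$, after which the paper invokes \cite[Lemma~2.8]{Troianiello1987} (which handles the coercive case without the sign condition on $d$) and then swaps $u_1,u_2$. The only point the paper makes explicit that you leave implicit is that the residual $\norm{w}_{L^2(\Omega)}$ term arising from the linear estimate is absorbed via the $H^{-1}\to H_0^1$ Lipschitz continuity of $S$, which is precisely your ``$k=0$ plus coercivity'' observation.
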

\begin{proof}
	Let $u_1, u_2 \in W^{-1,q'}(\Omega)$ be given
	and set $y_j := S(u_j)$ for $j = 1,2$.
	For $k > 0$, we define $\hat y_k := (y_1 - y_2 - k)^+$.
	Due to
	$
		y_1 - \hat y_k
		% =
		% y_1 + \min(y_2 + k - y_1 , 0)
		=
		\min(y_2 + k , y_1 )
		\ge
		y_a
	$,
	we can test the VIs for $y_1$ and $y_2$ with $y_1 - \hat y_k$ and $y_2 + \hat y_k$,
	respectively.
	Adding the resulting inequalities leads to
	\begin{equation*}
		\dual{\AA (y_1 - y_2)}{\hat y_k}
		\le
		\dual{u_1 - u_2}{\hat y_k}
		\qquad\forall k > 0.
	\end{equation*}
	Now, we can use the arguments of \cite[Lemma~2.8]{Troianiello1987}
	(see also the remark following this lemma)
	to obtain
	\begin{equation*}
		\esssup (y_1 - y_2)
		\le
		C \norm{u_1 - u_2}_{W^{-1,p'}(\Omega)}
		.
	\end{equation*}
	Note that we can avoid the $L^2(\Omega)$-norm
	of $y_1 - y_2$
	on the right-hand side of this estimate
	due to the Lipschitz-continuity
	of $S$ from $H^{-1}(\Omega)$ to $H_0^1(\Omega)$.
	By interchanging the roles of $y_1$ and $y_2$,
	we arrive at the claimed estimate.
\end{proof}

An important property is the monotonicity
\begin{equation}
	\label{eq:monotonicity}
	u_1 \le u_2
	\quad\Rightarrow\quad
	S(u_1) \le S(u_2),
\end{equation}
see, e.g.,
\cite[Corollary, p.~242]{Troianiello1987}.

From the seminal work of Mignot,
we get the directional differentiability of
the mapping
$S \colon H^{-1}(\Omega) \to H_0^1(\Omega)$,
see \cite[Théorème~3.3]{Mignot1976}.
\begin{theorem}
	\label{thm:direc_diff}
	The solution operator $S \colon H^{-1}(\Omega) \to H_0^1(\Omega)$
	is directionally differentiable at all points $\bar u \in H^{-1}(\Omega)$.
	The directional derivative $z := S'(\bar u; h) \in H_0^1(\Omega)$
	in direction $h \in H^{-1}(\Omega)$
	is given by the unique solution of the VI
	\begin{equation}
		\label{eq:direc_diff}
		z \in \KK(\bar u),
		\qquad
		\dual{\AA z - h}{v - z} \ge 0
		\quad\forall v \in \KK(\bar u).
	\end{equation}
	Here,
	\begin{equation*}
		\KK(\bar u) := \TT_K(\bar y) \cap \bar\xi\anni,
	\end{equation*}
	where $\bar y = S(\bar u)$ and $\bar\xi := \bar u - \AA \bar y$
	are the associated state and multiplier,
	respectively.
	For the critical cone $\KK(\bar u)$,
	we have the representation
	\begin{equation}
		\label{eq:crit_cone}
		\KK(\bar u)
		=
		\set[\big]{
			v \in H_0^1(\Omega)
			\given
			v \ge 0 \text{ q.e.\ on } \set{\bar y = y_a}
			\;\text{and}\;
			v = 0 \text{ q.e.\ on } \qsupp(\bar \xi)
		}.
	\end{equation}
\end{theorem}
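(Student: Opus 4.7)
The plan is to adapt Mignot's classical argument. Set $y_t := S(\bar u + th)$, $\xi_t := \bar u + t h - \AA y_t$, and $z_t := (y_t - \bar y)/t$ for $t > 0$; the aim is to show $z_t \to z$ in $H_0^1(\Omega)$ where $z$ is the unique solution of \eqref{eq:direc_diff}. The first step is a uniform bound on $z_t$: inserting $\bar y \in K$ into the VI for $y_t$ and $y_t \in K$ into the VI for $\bar y$, combining the two inequalities and invoking coercivity \eqref{eq:coercivity}, yields $\norm{z_t}_{H_0^1(\Omega)} \le \norm{h}_{H^{-1}(\Omega)}/\gamma_1$. After extraction, $z_{t_n} \weakly z$ in $H_0^1(\Omega)$.

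Next I verify that the weak limit lies in $\KK(\bar u) = \TT_K(\bar y) \cap \bar\xi\anni$. Since $z_{t_n} \in \RR_K(\bar y)$ and the tangent cone is weakly closed, $z \in \TT_K(\bar y)$. The normal-cone properties of $\bar\xi$ at $\bar y$ and of $\xi_t$ at $y_t$ yield $\dual{\bar\xi}{z_t} \le 0$ and $\dual{\xi_t}{z_t} \ge 0$; combined with the identity $\AA z_t - h = (\bar\xi - \xi_t)/t$ (from $-\xi_t = -\bar\xi + t(\AA z_t - h)$) and the uniform bound on $z_t$, this forces $\dual{\bar\xi}{z_t} \to 0$, so $\dual{\bar\xi}{z} = 0$ and $z \in \KK(\bar u)$.

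For the VI itself, fix $w \in \RR_K(\bar y) \cap \bar\xi\anni$ and $\lambda > 0$ with $\bar y + \lambda w \in K$, then test the VI for $y_t$ with the convex combination $v_t := (1 - t/\lambda) y_t + (t/\lambda)(\bar y + \lambda w) \in K$. Expanding via $-\xi_t = -\bar\xi + t(\AA z_t - h)$, using $\dual{\bar\xi}{w} = 0$, and letting $t \to 0$ produces $\dual{\AA z - h}{w} \ge 0$. Mignot's polyhedricity of $K$ --- the density of $\RR_K(\bar y) \cap \bar\xi\anni$ in $\KK(\bar u)$ --- extends this inequality to all $w \in \KK(\bar u)$. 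Weak lower semicontinuity of the non-negative continuous quadratic form $v \mapsto \dual{\AA v}{v}$ applied to $\dual{\AA z_t}{z_t} \le \dual{h}{z_t}$ gives $\dual{\AA z - h}{z} \le 0$, while testing with $w = z$ in the extended inequality yields the reverse; together they constitute \eqref{eq:direc_diff}. Uniqueness of the VI solution then implies $z_t \weakly z$ for the whole family, and strong convergence in $H_0^1(\Omega)$ follows from $\dual{\AA z_t}{z_t} \to \dual{\AA z}{z}$ and coercivity.

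The representation \eqref{eq:crit_cone} is capacity-theoretic: the identity $\TT_K(\bar y) = \{v \in H_0^1(\Omega) : v \ge 0 \text{ q.e.\ on } \{\bar y = y_a\}\}$ is classical, and since $-\bar\xi$ is a non-negative measure concentrated on $\{\bar y = y_a\}$ (by complementarity), the condition $\dual{\bar\xi}{v} = 0$ for $v \in \TT_K(\bar y)$ is equivalent via the definition of the quasi-support to $v = 0$ q.e.\ on $\qsupp(\bar\xi)$. The main obstacle is the polyhedricity step: one must approximate an arbitrary $v \in \TT_K(\bar y) \cap \bar\xi\anni$ by radial directions that simultaneously preserve the quasi-pointwise sign constraint on $\{\bar y = y_a\}$ and the vanishing on $\qsupp(\bar\xi)$, which rests on delicate capacity-theoretic truncation arguments.
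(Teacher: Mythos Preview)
Your argument is correct and reproduces exactly Mignot's classical proof, which is precisely what the paper invokes: the paper does not supply its own proof of this theorem but simply cites \cite[Théorème~3.3]{Mignot1976} for the directional differentiability and \cite[Lemma~3.1]{Wachsmuth2013:2} for the representation \eqref{eq:crit_cone} of the critical cone. In that sense you have written out what the paper delegates to the literature, and there is nothing to compare beyond noting that your sketch is more detailed than what actually appears in the paper.
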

The formula for the critical cone
involving the quasi-support of $\bar\xi$
can be found in
\cite[Lemma~3.1]{Wachsmuth2013:2}.

Since $S \colon H^{-1}(\Omega) \to H_0^1(\Omega)$ is Lipschitz continuous,
we obtain that $S$ is even Hadamard differentiable,
i.e.,
$(S(\bar u + t_k h_k) - S(\bar u)) / t_k \to S'(\bar u; h)$
if $h_k \to h$ in $H^{-1}(\Omega)$
and $t_k \searrow 0$.

We note that the monotonicity \eqref{eq:monotonicity}
implies
\begin{equation}
	\label{eq:mon_deriv}
	h_1 \le h_2
	\quad\Rightarrow\quad
	S'(u; h_1 ) \le S'(u; h_2)
\end{equation}
for the directional derivative.

The next lemma shows that the difference quotients
converge uniformly on the set where $\bar y$ has a positive
distance from the lower bound $y_a$.
\begin{lemma}
	\label{lem:better_differentiability}
	Let the assumptions of \cref{thm:solution_VI}
	and, additionally,
	$y_a \in C(\bar\Omega)$
	be satisfied.
	For $\bar u \in L^2(\Omega)$,
	we define the state $\bar y := S(\bar u)$
	and the set
	\begin{equation*}
		\hat\Omega := \set{ \bar y \ge y_a + \sigma}
		,
	\end{equation*}
	where $\sigma > 0$ is arbitrary
	and we use the continuous representatives of $\bar y$ and $y_a$.
	\begin{enumerate}[label=(\alph*)]
		\item\label{item:better_differentiability_1}
			For an arbitrary $q' > n$, there exists a constant $C > 0$, such that
			\begin{align*}
				% \norm{S'(\bar u; h)}_{L^\infty(\hat\Omega)}
				% &\le
				% C \norm{h}_{W^{-1,q'}(\Omega)}
				% \qquad\forall h \in W^{-1,q'}(\Omega),
				% \\
				\norm{S'(\bar u; h_1) - S'(\bar u; h_2)}_{L^\infty(\hat\Omega)}
				&\le
				C \norm{h_1 - h_2}_{W^{-1,q'}(\Omega)}
				\qquad\forall h_1, h_2 \in W^{-1,q'}(\Omega).
			\end{align*}
		\item\label{item:better_differentiability_3}
			Let $\tilde\varphi \in C^\infty(\R^n)$ be given such that $\tilde\varphi$ vanishes on
			a neighborhood of $\hat\Omega$. Then,
			\begin{equation*}
				\norm{S'(\bar u; \tilde\varphi h_1) - S'(\bar u; \tilde\varphi h_2)}_{L^\infty(\hat\Omega)}
				\le
				C \norm{h_1 - h_2}_{H^{-1}(\Omega)}
				\qquad\forall h_1, h_2 \in H^{-1}(\Omega).
			\end{equation*}
		\item\label{item:better_differentiability_2}
			Let sequences $\seq{h_k}_{k \in \N} \subset L^2(\Omega)$
			and $\seq{t_k}_{k \in \N} \subset \R^+$
			be given such that
			$h_k \to h$ in $W^{-1,q'}(\Omega)$
			and
			$t_k \searrow 0$.
			We define the perturbed states $y_k := S(\bar u + t_k h_k)$.
			Then,
			the difference quotients $(y_k - \bar y)/t_k$
			converge
			towards $S'(\bar u; h)$
			uniformly on the set $\hat \Omega$
			as $k \to \infty$.
			In particular,
			$S'(\bar u; h)$ is continuous on $\hat \Omega$.
	\end{enumerate}
\end{lemma}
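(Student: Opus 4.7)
The plan is to localize $S'(\bar u;\cdot)$ to a neighborhood of $\hat\Omega$ on which the obstacle is inactive, and then reduce each claim to a linear regularity result via \cref{thm:hoelderreg,lem:hinfboundu}. Since $\bar y$ and $y_a$ are continuous and $\bar y - y_a \ge \sigma$ on $\hat\Omega$, I can fix an open set $W \subset \Omega$ with $\hat\Omega \subset W$ and $\bar y > y_a$ strictly on $\overline W$; after shrinking, $W$ is also disjoint from $\qsupp(\bar\xi)$, which lies (up to a set of capacity zero) in the coincidence set. By \eqref{eq:crit_cone}, every $\phi \in C_c^\infty(W)$ satisfies $z \pm \phi \in \KK(\bar u)$ for $z := S'(\bar u;h)$; testing \eqref{eq:direc_diff} then yields $\AA z = h$ distributionally on $W$. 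I fix a cut-off $\psi \in C_c^\infty(\Omega)$ with $\psi \equiv 1$ in a neighborhood of $\hat\Omega$ and $\supp\psi \subset W$.

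For \cref{item:better_differentiability_1}, write $w := S'(\bar u;h_1) - S'(\bar u;h_2)$. Mirroring the commutator computation in the proofs of \cref{lem:smooth_multiplier,lem:hinfboundu} gives
\[
	\dual{\AA(\psi w)}{v} = \dual{\AA w}{\psi v} + R(w,v),\qquad v \in H_0^1(\Omega),
\]
with a remainder bounded by $C\|w\|_{H_0^1(\Omega)}\|v\|_{W^{1,q}(\Omega)}$ for the conjugate exponent $q<2$ of $q'$. Approximation of $\psi v$ by $C_c^\infty(W)$ functions in $W_0^{1,q}(\Omega)$ extends the distributional identity $\AA w = h_1 - h_2$ on $W$ to $\dual{\AA w}{\psi v} = \dual{h_1-h_2}{\psi v}_{W^{-1,q'},W_0^{1,q}}$. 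A standard cross-test in \eqref{eq:direc_diff} combined with the coercivity \eqref{eq:coercivity} delivers $\|w\|_{H_0^1(\Omega)} \le C\|h_1 - h_2\|_{H^{-1}(\Omega)}$, whence $\|\AA(\psi w)\|_{W^{-1,q'}(\Omega)} \le C\|h_1 - h_2\|_{W^{-1,q'}(\Omega)}$. \cref{thm:hoelderreg} then yields the claimed $L^\infty(\hat\Omega)$ bound for $q' \in (n, 6]$, and the case $q' > 6$ follows from the embedding $W^{-1,q'}(\Omega) \embeds W^{-1,6}(\Omega)$.

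Part \cref{item:better_differentiability_3} follows from the same computation after shrinking $\supp\psi$ into the open set on which $\tilde\varphi \equiv 0$. The main pairing then reads $\dual{h_1-h_2}{\tilde\varphi \psi v} = 0$, only the remainder $R$ survives, and the weaker bound $\|w\|_{H_0^1(\Omega)} \le C\|\tilde\varphi(h_1-h_2)\|_{H^{-1}(\Omega)} \le C'\|h_1-h_2\|_{H^{-1}(\Omega)}$ suffices on the right-hand side.

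For \cref{item:better_differentiability_2}, \cref{lem:lipschitz_with_w1p} gives $\|y_k - \bar y\|_{L^\infty(\Omega)} \le Ct_k\|h_k\|_{W^{-1,q'}(\Omega)} \to 0$, so $y_k > y_a$ strictly on $W$ for large $k$ and consequently $\AA y_k = \bar u + t_k h_k$ distributionally on $W$. The difference quotient $d_k := (y_k - \bar y)/t_k$ therefore satisfies $\AA(d_k - z) = h_k - h$ distributionally on $W$, so the localization from \cref{item:better_differentiability_1} delivers
\[
	\|\psi(d_k - z)\|_{C_0^{0,\alpha}(\Omega)} \le C\bigl(\|h_k - h\|_{W^{-1,q'}(\Omega)} + \|d_k - z\|_{H_0^1(\Omega)}\bigr),
\]
whose right-hand side vanishes as $k \to \infty$ by hypothesis and by the Hadamard differentiability of $S \colon H^{-1}(\Omega) \to H_0^1(\Omega)$. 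Uniform convergence on $\hat\Omega$ follows, and continuity of $S'(\bar u;h)$ on $\hat\Omega$ is inherited from the continuous difference quotients (\cref{thm:solution_VI}) as a uniform limit. The main obstacle throughout is the justification of $\dual{\AA w}{\psi v} = \dual{h_1-h_2}{\psi v}$ for $v \in H_0^1(\Omega)$: since $\AA w \in H^{-1}(\Omega)$ only matches the more regular object $h_1-h_2 \in W^{-1,q'}(\Omega)$ distributionally on $W$, the identification hinges on approximating $\psi v$ in $W_0^{1,q}(\Omega)$ by $C_c^\infty(W)$ functions, which in turn requires $\supp\psi$ to be a compact subset of the open set $W$.
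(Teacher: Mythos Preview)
Your proof is correct but takes a different route from the paper's. The paper works on the \emph{data} side: it writes
\[
	S'(\bar u;h_1)-S'(\bar u;h_2)=\AA^{-1}(h_1-h_2)-\AA^{-1}\hat\xi,
	\qquad
	\hat\xi:=\bigl(h_1-\AA S'(\bar u;h_1)\bigr)-\bigl(h_2-\AA S'(\bar u;h_2)\bigr),
\]
observes via the critical-cone description that $(1-\varphi)\hat\xi=0$ for a cut-off $\varphi$ vanishing near $\hat\Omega$, and then applies \cref{lem:hinfboundu} to the $\hat\xi$-piece and \cref{thm:hoelderreg} to the $(h_1-h_2)$-piece as black boxes; for (c) the analogous splitting uses the multipliers $\xi_k,\bar\xi$ of the full VI. Your approach works on the \emph{solution} side: you multiply $w$ by a cut-off $\psi$ supported in the inactive set, control $\AA(\psi w)$ in $W^{-1,q'}(\Omega)$ via the commutator identity, and then invoke \cref{thm:hoelderreg} once. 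This effectively inlines the mechanism behind \cref{lem:hinfboundu} rather than citing it, which is slightly longer but just as clean; it also makes transparent why only the $H_0^1$-size of $w$ enters the remainder, which is exactly what drives (b) and (c).

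One small slip to fix: the extension of the distributional identity $\AA w=h_1-h_2$ on $W$ to the pairing $\dual{\AA w}{\psi v}$ requires approximating $\psi v$ by $C_c^\infty(W)$ functions in $H_0^1(\Omega)$, not merely in $W_0^{1,q}(\Omega)$, because the left-hand side is an $H^{-1}\times H_0^1$ pairing and $\AA w$ is only in $H^{-1}(\Omega)$. This is harmless, since $\psi v\in H_0^1(\Omega)$ has compact support in $W$ and mollification provides the needed $H_0^1$-approximants; but your last paragraph should say $H_0^1$ rather than $W_0^{1,q}$.
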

\begin{proof}
	Note that we get continuity of $\bar y$
	from \cref{thm:solution_VI}
	and
	$y_a$ is continuous by assumption.
	Thus, the sets $\hat\Omega$ and
	\begin{equation*}
		\hat\Omega_2 := \set{ \bar y \le y_a + \sigma/2}
	\end{equation*}
	are closed.
	In the sequel, we are going to apply the regularity result
	\cref{lem:hinfboundu} with $U = \hat\Omega$.
	Therefore, we fix a function $\varphi \in C^\infty(\R^n)$,
	such that $0 \le \varphi \le 1$ on $\R^n$,
	$\varphi = 0$ on a neighborhood $V$ of $U = \hat\Omega$
	and $\varphi = 1$ on $\hat\Omega_2$.
	Note that this is possible,
	since
	the sets
	$\hat\Omega, \hat\Omega_2$ have a positive distance.

	We start with \ref{item:better_differentiability_1}.
	Let $h_1, h_2 \in W^{-1,q'}(\Omega)$ be given.
	We define the functional
	\begin{equation*}
		\hat\xi
		:=
		\parens[\big]{ h_1 - \AA S'(\bar u; h_1) }
		-
		\parens[\big]{ h_2 - \AA S'(\bar u; h_2) }
		\in \KK(\bar u)\polar 
		,
	\end{equation*}
	see \eqref{eq:direc_diff}.
	Note that
	\begin{equation*}
		\norm{\hat\xi}_{H^{-1}(\Omega)}
		\le
		\norm{h_1 - h_2}_{H^{-1}(\Omega)}
		.
	\end{equation*}
	For arbitrary $v \in H_0^1(\Omega)$,
	we have $(1-\varphi) v = 0$ q.e.\ on $\hat\Omega_2 \supset \set{\bar y = y_a}$.
	Thus, $\pm (1-\varphi) v \in \KK(\bar u)$.
	Thus,
	$\dual{(1-\varphi) \hat\xi}{v} = 0$,
	i.e.,
	$\hat\xi = \varphi \hat\xi$.
	By \cref{thm:hoelderreg,lem:hinfboundu}, we get
	\begin{align*}
		\norm{S'(\bar u; h_1) - S'(\bar u; h_2)}_{L^\infty(\hat\Omega)}
		&=
		\norm{\AA^{-1}(\varphi \hat\xi - h_1 + h_2)}_{L^\infty(\hat\Omega)}
		\\&
		\le
		C \parens*{ \norm{\hat\xi}_{H^{-1}(\Omega)} + \norm{h_1 - h_2}_{W^{-1,q'}(\Omega)} }
		\\&
		\le
		C \parens*{ \norm{h_1 - h_2}_{H^{-1}(\Omega)} + \norm{h_1 - h_2}_{W^{-1,q'}(\Omega)} }
	\end{align*}
	and the assertion follows.

	The proof of \ref{item:better_differentiability_3} is very similar.
	One just has to replace \cref{thm:hoelderreg}
	by another application of \cref{lem:hinfboundu}.

	Next, we show \ref{item:better_differentiability_2}.
	Since $S \colon W^{-1,q'}(\Omega) \to L^\infty(\Omega)$
	is Lipschitz by \cref{lem:lipschitz_with_w1p},
	there exists $N \in \N$ such that
	\begin{equation*}
		y_k(x) > y_a(x)
		\qquad\forall x \in \Omega \setminus \hat \Omega_2, k \ge N.
	\end{equation*}
	Hence, the associated multiplier $\xi_k := (\bar u + t_k h_k) - \AA y_k \in L^2(\Omega)$
	is supported on the set $\hat \Omega_2$.
	Now, we consider the difference quotient of the multipliers
	$\hat\xi_k := (\xi_k - \bar\xi)/t_k$,
	where $\bar\xi := \bar u - \AA \bar y$.
	This difference quotient is supported on $\hat\Omega_2$
	and converges in $H^{-1}(\Omega)$
	towards $\hat\xi := h - \AA S'(\bar u; h)$,
	see \cref{thm:direc_diff}.
	This implies
	$\hat\xi_k = \varphi \hat\xi_k \to \varphi \hat\xi = \hat\xi$.

	Thus, we can apply
	\cref{lem:hinfboundu}
	and obtain
	\begin{equation*}
		\norm*{
			\AA^{-1} \hat\xi_k
			-
			\AA^{-1} \hat\xi
		}_{L^\infty(\hat\Omega)}
		=
		\norm{
			\AA^{-1} \parens*{\varphi (\hat\xi_k - \hat\xi)}
		}_{L^\infty(\hat\Omega)}
		\le
		\norm{\hat\xi_k - \hat\xi}_{H^{-1}(\Omega)}
		\to
		0
		.
	\end{equation*}
	Together with
	\begin{equation*}
		\frac{y_k - \bar y}{t_k}
		=
		\AA^{-1} h + \AA^{-1} \hat\xi_k
		\qquad\text{and}\qquad
		S'(\bar u; h)
		=
		\AA^{-1} h + \AA^{-1} \hat\xi
		,
	\end{equation*}
	this shows the claim.
\end{proof}

A well-known property
of the obstacle problem with a lower bound
is the pointwise convexity of the solution operator.
This renders the state constraint convex w.r.t.\ the control $u$
and will become important for our analysis
of the optimal control problem.
\begin{lemma}
	\label{lem:pw_convex}
	Let $u_1, u_2 \in H^{-1}(\Omega)$ and $\alpha \in (0,1)$ be given.
	Then,
	\begin{equation*}
		S(\alpha u_1 + (1-\alpha) u_2)
		\le
		\alpha S(u_1)
		+
		(1-\alpha) S(u_2)
		\qquad\text{a.e.\ in $\Omega$}.
	\end{equation*}
\end{lemma}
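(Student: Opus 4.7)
My plan is to compare $y_\alpha := S(\alpha u_1 + (1-\alpha) u_2)$ with the pointwise average $\tilde y := \alpha S(u_1) + (1-\alpha) S(u_2)$ by showing $(y_\alpha - \tilde y)^+ = 0$. Writing $y_i := S(u_i)$ and $\xi_i := u_i - \AA y_i$, the defining VI gives $\xi_i \in \NN_K(y_i)$, and in particular $\dual{\xi_i}{\phi} \le 0$ for every nonnegative $\phi \in H_0^1(\Omega)$ (because $y_i + \phi \in K$). Since $y_1, y_2 \ge y_a$, the convex combination satisfies $\tilde y \ge y_a$, hence $\tilde y \in K$, and
\begin{equation*}
\AA \tilde y = \alpha u_1 + (1-\alpha) u_2 - \tilde\xi, \qquad \tilde\xi := \alpha \xi_1 + (1-\alpha)\xi_2,
\end{equation*}
where $\dual{\tilde\xi}{\phi} \le 0$ for every nonnegative $\phi \in H_0^1(\Omega)$.

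Next I exploit that $v := \min(y_\alpha, \tilde y) = y_\alpha - (y_\alpha - \tilde y)^+$ lies in $K$, since both $y_\alpha$ and $\tilde y$ dominate $y_a$. Testing the VI for $y_\alpha$ with $v$ yields
\begin{equation*}
\dual{\AA y_\alpha - \alpha u_1 - (1-\alpha) u_2}{(y_\alpha - \tilde y)^+} \le 0,
\end{equation*}
while the identity for $\tilde y$, combined with the sign of $\tilde\xi$ applied to the nonnegative function $(y_\alpha - \tilde y)^+$, gives
\begin{equation*}
\dual{\AA \tilde y - \alpha u_1 - (1-\alpha) u_2}{(y_\alpha - \tilde y)^+} = -\dual{\tilde\xi}{(y_\alpha - \tilde y)^+} \ge 0.
\end{equation*}
Subtracting and setting $w := y_\alpha - \tilde y \in H_0^1(\Omega)$, I obtain $\dual{\AA w}{w^+} \le 0$.

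The key step that closes the argument is to show $\dual{\AA w}{w^+} \ge \gamma_1 \norm{w^+}_{H_0^1(\Omega)}^2$, which together with the previous inequality forces $w^+ = 0$. Decomposing $w = w^+ - w^-$, this reduces to checking that the cross term $\dual{\AA w^-}{w^+}$ vanishes. Using the explicit form \eqref{eq:diff_op} of $\AA$, every summand of $\dual{\AA w^-}{w^+}$ contains one of the products $w^+ w^-$, $w^+ \nabla w^-$ or $w^- \nabla w^+$, and each of these vanishes almost everywhere by the standard property of the positive-part truncation in $H_0^1(\Omega)$. Hence $\dual{\AA w}{w^+} = \dual{\AA w^+}{w^+} \ge \gamma_1 \norm{w^+}_{H_0^1(\Omega)}^2$ by the coercivity \eqref{eq:coercivity}, so $w^+ \equiv 0$, which is the claim.

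The potential subtlety (and what I would watch out for) is handling the first-order terms $b, c$ in $\AA$: without them, the argument is the classical comparison principle, and verifying that the disjoint-support property of $w^+$ and $w^-$ kills all mixed terms is the one place where the low regularity \eqref{eq:regularity_coefficients} of the coefficients needs to be addressed explicitly.
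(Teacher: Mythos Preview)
Your argument is correct and follows essentially the same route as the paper's proof: both set $w := (y_\alpha - \tilde y)^+$, test the variational inequalities with $y_i + w$ and $y_\alpha - w$ (you phrase the former via the multipliers $\xi_i$, which is equivalent), combine them to obtain $\dual{\AA(y_\alpha - \tilde y)}{w} \le 0$, and then use the identity $\dual{\AA v}{v^+} = \dual{\AA v^+}{v^+}$ together with coercivity to conclude $w = 0$. The only difference is presentational; your extra remark on why the cross term $\dual{\AA w^-}{w^+}$ vanishes makes explicit what the paper states without proof.
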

\begin{proof}
	For convenience, we reproduce the short proof from
	\cite[Lemme~4.1]{Mignot1976}.

	We set
	$u_3 := \alpha u_1 + (1-\alpha) u_2$
	and
	$y_i := S(u_i)$ for $i = 1,\ldots, 3$.
	We have to show that
	$w := \parens{y_3 - \alpha y_1 - (1-\alpha) y_2 }^+$
	is zero.
	This definition directly implies
	$w \ge 0$
	and
	$y_3 - w = \min\braces{y_3, \alpha y_1 + (1-\alpha) y_2} \ge y_a $.
	Thus,
	we obtain
	\begin{align*}
		\dual{\AA y_1 - u_1}{w}
		&=
		\dual{\AA y_1 - u_1}{y_1 + w - y_1}
		\ge
		0,
		\\
		\dual{\AA y_2 - u_2}{w}
		&=
		\dual{\AA y_2 - u_2}{y_2 + w - y_2}
		\ge
		0,
		\\
		\dual{\AA y_3 - u_3}{-w}
		&=
		\dual{\AA y_3 - u_3}{y_3-w - y_3}
		\ge
		0
		.
	\end{align*}
	We multiply the first two inequalities by $\alpha$ and $(1-\alpha),$ respectively.
	Adding the three resulting inequalities yields
	\begin{equation*}
		\dual{\AA (y_3 - \alpha y_1 - (1-\alpha) y_2)}{w}
		\le
		0.
	\end{equation*}
	The structure of the differential operator gives
	\begin{equation*}
		\dual{\AA v^+}{v^+}
		=
		\dual{\AA v}{v^+}
		\qquad\forall v \in H_0^1(\Omega)
	\end{equation*}
	and together with the coercivity of $\AA$
	we obtain $w = 0$.
\end{proof}
From this pointwise convexity,
we obtain two inequalities for the directional derivative
by the usual arguments.
\begin{corollary}
	\label{cor:conv_direc_deriv}
	Let $u, h, h_2 \in H^{-1}(\Omega)$ be given.
	Then,
	\begin{align*}
		S(u) + S'(u; h) &\le S(u + h)
		\qquad\text{a.e.\ in $\Omega$},
		\\
		S'(u; h + h_2) &\le S'(u; h) + S'(u; h_2)
		\qquad\text{a.e.\ in $\Omega$}.
	\end{align*}
\end{corollary}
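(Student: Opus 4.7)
The plan is to deduce both inequalities from the pointwise convexity of $S$ established in \cref{lem:pw_convex} by picking an appropriate convex combination, passing to difference quotients, and letting the parameter tend to zero.

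For the first inequality, I would fix $t \in (0,1)$ and apply \cref{lem:pw_convex} with the convex combination $\alpha (u + h) + (1-\alpha) u = u + \alpha h$ where $\alpha = t$, which yields
\begin{equation*}
    S(u + t h) \le t \, S(u + h) + (1-t)\, S(u) \qquad \text{a.e.\ in } \Omega.
\end{equation*}
Rearranging gives $\bigl(S(u + th) - S(u)\bigr)/t \le S(u+h) - S(u)$ a.e. Now I would let $t \searrow 0$: the left-hand side converges to $S'(u; h)$ in $H_0^1(\Omega)$ by \cref{thm:direc_diff}, hence in $L^2(\Omega)$ by the Sobolev embedding, and so a.e.\ along a subsequence. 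Passing to the limit in the pointwise a.e.\ inequality yields the claim.

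For the second inequality, I would use the midpoint splitting $u + t(h + h_2) = \tfrac{1}{2}(u + 2 t h) + \tfrac{1}{2}(u + 2 t h_2)$ and apply \cref{lem:pw_convex} with $\alpha = 1/2$ to obtain
\begin{equation*}
    S\bigl(u + t(h+h_2)\bigr) - S(u) \le \tfrac{1}{2}\bigl(S(u + 2 t h) - S(u)\bigr) + \tfrac{1}{2}\bigl(S(u + 2 t h_2) - S(u)\bigr)
\end{equation*}
a.e.\ in $\Omega$. Dividing by $t$ puts the right-hand side in the form of the two difference quotients $(S(u+2th) - S(u))/(2t)$ and $(S(u+2th_2) - S(u))/(2t)$, which converge to $S'(u; h)$ and $S'(u; h_2)$, respectively, in $H_0^1(\Omega)$ as $t \searrow 0$; the left-hand side converges to $S'(u; h+h_2)$. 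Extracting an a.e.\ convergent subsequence as before yields the claim.

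The main obstacle is strictly bookkeeping: the pointwise convexity gives inequalities that hold only a.e., and the directional derivative is a priori an $H_0^1(\Omega)$-limit. The only nontrivial step is to justify passing to the limit \emph{pointwise a.e.}, which is handled by the standard subsequence argument based on the embedding $H_0^1(\Omega) \hookrightarrow L^2(\Omega)$. No new analytic tools beyond \cref{lem:pw_convex} and \cref{thm:direc_diff} are needed.
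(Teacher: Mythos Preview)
Your proof is correct and follows essentially the same route as the paper: both arguments apply \cref{lem:pw_convex} to the convex combinations $(1-t)u + t(u+h)$ and $\tfrac12(u+2th)+\tfrac12(u+2th_2)$, form difference quotients, and pass to the limit $t\searrow 0$. The only difference is that you make explicit the subsequence argument for pointwise a.e.\ convergence, which the paper leaves implicit.
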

\begin{proof}
	For any $t \in (0,1)$ we have
	\begin{equation*}
		S(u + t h) = S( (1-t) u + t (u + h) )
		\le
		(1-t) S(u) + t S(u + h).
	\end{equation*}
	Now, we subtract $S(u),$ divide by $t > 0$
	and pass to the limit $t \searrow 0$
	to arrive at the first assertion.
	The second assertion follows similarly by considering
	\begin{equation*}
		2 \, \parens[\big]{S(u + t \, (h + h_2)) - S(u)}
		\le
		S(u + t \, (2 h)) - S(u)
		+
		S(u + t \, (2 h_2)) - S(u)
		,
	\end{equation*}
	dividing by $t > 0,$ passing to the limit $t \searrow 0$
	and using the positive homogeneity of $S'(u; \cdot)$.
\end{proof}

\subsection{Optimal control problem}
\label{subsec:oc}
We will now discuss the optimal control problem \eqref{eq:P:unreg}. To this end, let us collect
all the assumptions which have been made in the previous preliminary results.
Additionally,
we make further assumptions
concerning the optimal control problem,
in particular we will assume the existence of a Slater point, from which we will eventually deduce existence of a Lagrange multiplier associated with the state constraints. 

\begin{assumption}\leavevmode
	\label{asm:big_list}
	\begin{enumerate}[label=(\roman*)]
		\item
			\label{item:asm_domain}
			The domain $\Omega \subset \R^n,$ $n \in \set{2,3}$,
			is bounded and
			satisfies the uniform exterior cone condition,
			see \cite[p.~205]{GilbargTrudinger2001}.
		\item
			\label{item:asm_diff_op}
			The differential operator $\AA$ is given as in \eqref{eq:diff_op},
			such that
			\eqref{eq:regularity_coefficients}--\eqref{eq:coercivity}
			are satisfied.
		\item
			\label{item:asm_obstacle}
			The obstacle $y_a \in H^1(\Omega) \cap C(\bar\Omega)$ satisfies
			$y_a \le 0$ on $\Gamma$ in the sense
			$\max\set{y_a, 0} \in H_0^1(\Omega)$
			and
			$\AA y_a \in L^2(\Omega)$.
		\item
			\label{item:asm_state_const}
			The state constraint has the regularity $y_b \in  C(\bar\Omega)$ and satisfies
			$y_b>0$ on $\Gamma$.
		\item
			\label{item:asm_Uad}
			The control set $\Uad \subset L^2(\Omega)$
			is convex, closed, and non-empty.
		\item
			\label{item:asm_objective}
			The objective $J \colon H_0^1(\Omega) \times L^2(\Omega) \to \R$
			is assumed to be
			continuously Fréchet-differentiable and bounded from below.
			We require that $J$ is
			sequentially lower semi-continuous
			w.r.t.\ to the strong topology in $H_0^1(\Omega)$ and the weak topology in $L^2(\Omega)$, that is
			$J(y,u) \le \liminf_{k \to \infty} J(y_k, u_k)$
			for all sequences $\seq[\big]{(y_k, u_k)}_{k \in \N} \subset H_0^1(\Omega) \times L^2(\Omega)$
			satisfying $y_k \to y$ in $H_0^1(\Omega)$ and $u_k \weakly u$ in $L^2(\Omega)$.
			Finally, we assume that $J$ is coercive w.r.t.\ the second variable on the feasible set $\Uad$,
			that is
			the boundedness of $\seq{u_k}_{k \in \N}$ in $L^2(\Omega)$ follows from the boundedness of $\seq{J(y_k, u_k)}_{k \in \N}$
			for all sequences $\seq[\big]{(y_k, u_k)}_{k \in \N} \subset H_0^1(\Omega) \times \Uad$.
		\item
			\label{item:asm_slater}
			There exists a Slater point $\hat u \in \Uad$
			with $S(\hat u) \le y_b - \tau$
			on $\Omega$
			for some $\tau > 0$.
	\end{enumerate}
\end{assumption}

Due to the Slater condition,
we have
$y_a \le y_b - \tau$
on $\Omega$.
In the case without control constraints,
this apparently weaker condition
already implies the Slater condition.
\begin{lemma}
	\label{lem:slater_from_distance}
	We assume \cref{asm:big_list}~\ref{item:asm_domain}--\ref{item:asm_state_const}.
	If, additionally, $y_a \le y_b - \tau$ on $\Omega$ for some $\tau > 0$, $y_a<0$ on $\Gamma$,
	and $\Uad = L^2(\Omega)$,
	then
	\cref{asm:big_list}~\ref{item:asm_slater}
	follows.
\end{lemma}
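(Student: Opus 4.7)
The plan is to construct an explicit candidate state $\hat y \in K$ sitting strictly below $y_b$ and then pass to a Slater control $\hat u \in L^2(\Omega) = \Uad$ by approximating the (only $H^{-1}$-regular) data $\AA \hat y$ in a negative Sobolev space on which $S$ is Lipschitz into $L^\infty(\Omega)$; cf.~\cref{lem:lipschitz_with_w1p}.

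First I would use the strict boundary inequalities $y_a < 0$ and $y_b > 0$ on $\Gamma$, together with continuity of $y_a, y_b$ on $\bar\Omega$, to fix $\kappa > 0$ so small that
\[
	F := \set{ x \in \bar\Omega \given y_a(x) \ge -\kappa } \cup \set{ x \in \bar\Omega \given y_b(x) \le \kappa }
\]
is a compact subset of $\Omega$. Then I would pick $\chi \in C_c^\infty(\Omega)$ with $0 \le \chi \le 1$ and $\chi \equiv 1$ on $F$, and set $\hat y := \chi \, y_a$. Since $\chi$ has compact support, $\hat y \in H_0^1(\Omega) \cap C(\bar\Omega)$. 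Distinguishing $x \in F$ (where $\chi = 1$, so $\hat y = y_a$) from $x \notin F$ (where $y_a(x) < -\kappa$, $y_b(x) > \kappa$, and $\hat y = \chi y_a \in [y_a, 0]$), one checks directly that
\[
	y_a \le \hat y \le y_b - \tau' \quad \text{on } \bar\Omega, \quad \text{where } \tau' := \min(\tau, \kappa) > 0.
\]
Consequently $\hat y \in K$ and, since $\hat y$ trivially solves \eqref{eq:VI:unreg} for the data $\AA \hat y$ with vanishing multiplier, $S(\AA \hat y) = \hat y$.

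Next I would verify $\AA \hat y \in W^{-1, q'}(\Omega)$ for some $q' > n$ (with $q' \le 2n/(n-2)$ in case $n = 3$, arbitrary finite $q'$ in case $n = 2$) by rerunning the estimate in the proof of \cref{lem:smooth_multiplier}. Although that lemma is stated for $z \in H_0^1(\Omega)$, its proof uses only $z \in L^\infty(\Omega)$ together with the fact that $\chi z \in H_0^1(\Omega)$; both remain valid here for $z = y_a \in H^1(\Omega) \cap C(\bar\Omega)$, because $\chi \in C_c^\infty(\Omega)$ forces $\chi y_a \in H_0^1(\Omega)$ and $\AA y_a \in L^2(\Omega) \subset W^{-1, q'}(\Omega)$ supplies the required regularity of the data. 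Since $C_c^\infty(\Omega) \subset L^2(\Omega)$ is dense in $W^{-1, q'}(\Omega)$, I then pick a sequence $(\hat u_k)_{k \in \N} \subset L^2(\Omega)$ with $\hat u_k \to \AA \hat y$ in $W^{-1, q'}(\Omega)$ and invoke \cref{lem:lipschitz_with_w1p} to get
\[
	\norm{S(\hat u_k) - \hat y}_{L^\infty(\Omega)}
	= \norm{S(\hat u_k) - S(\AA \hat y)}_{L^\infty(\Omega)}
	\le C \, \norm{\hat u_k - \AA \hat y}_{W^{-1, q'}(\Omega)}
	\to 0.
\]
For $k$ large this yields $S(\hat u_k) \le y_b - \tau'/2$ on $\bar\Omega$, so $\hat u := \hat u_k$ is a Slater point with gap $\tau'/2$.

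The main obstacle I anticipate is precisely the verification of $\AA \hat y \in W^{-1, q'}(\Omega)$: \cref{lem:smooth_multiplier} does not apply verbatim because $y_a \notin H_0^1(\Omega)$, so one has to re-examine that proof carefully to confirm that the decisive bound survives in the slightly weaker setting $z \in H^1(\Omega) \cap C(\bar\Omega)$ combined with a compactly supported cutoff. All other ingredients --- the pointwise bounds on $\hat y$, the density of $L^2(\Omega)$ in $W^{-1, q'}(\Omega)$, and the application of \cref{lem:lipschitz_with_w1p} --- are routine.
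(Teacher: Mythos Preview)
Your proof is correct and follows the same overall strategy as the paper: construct a candidate state in $K$ sitting strictly below $y_b$, observe that $S$ applied to its $\AA$-image recovers it, then approximate that $\AA$-image by $L^2$ data using density in $W^{-1,q'}(\Omega)$ together with the Lipschitz bound of \cref{lem:lipschitz_with_w1p}. The only genuine difference lies in the choice of candidate. The paper simply asserts the existence of an ``arbitrarily smooth'' function $\tilde y \ge y_a$ with $\tilde y = 0$ on $\Gamma$ and positive distance to $y_b$, so that $\AA \tilde y \in W^{-1,q'}(\Omega)$ is automatic. You instead take the explicit choice $\hat y = \chi y_a$, which is only $H_0^1 \cap C$, and then recover $\AA \hat y \in W^{-1,q'}(\Omega)$ by rerunning the computation of \cref{lem:smooth_multiplier} with $z = y_a \in H^1(\Omega) \cap C(\bar\Omega)$ in place of $z \in H_0^1(\Omega)$. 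That adaptation is indeed harmless: the proof there uses only $z \in L^\infty(\Omega)$ (which you have directly from $y_a \in C(\bar\Omega)$) and $\AA z \in W^{-1,q'}(\Omega)$ (which follows from $\AA y_a \in L^2(\Omega)$ and the embedding $L^2(\Omega) \embeds W^{-1,q'}(\Omega)$ for the relevant range of $q'$). Your route is more explicit and closes the small hand-wave of actually producing a smooth $\tilde y$ with the required two-sided bounds; the paper's route is shorter once that construction is granted.
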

\begin{proof}
	Since $y_a, y_b$ are continuous, $y_b>0>y_a$ on $\Gamma,$ and $y_b-y_a\ge \tau>0$, it is possible to construct an arbitrarily smooth function $\tilde y\ge y_a$ with $\tilde y=0$ on $\Gamma$ and positive distance to $y_b$.
	Now we define $\tilde u:=\AA\tilde y\in W^{-1,q'}(\Omega)$, $q'>n$.
	By a density argument, we can smooth $\tilde u$ and construct $\hat u\in L^2(\Omega)$ such that,
	with $\hat y=S(\hat u)$, \cref{lem:lipschitz_with_w1p}
	guarantees
	$\norm{\tilde y-\hat y}_{L^\infty(\Omega)}\le \varepsilon$ for any fixed, positive $\varepsilon$.
	Therefore, choosing $\hat u$ corresponding to $\varepsilon>0$ small enough,
	$\hat y$ has positive distance to $y_b$, meaning that $\hat u$ fulfills the Slater point property.
\end{proof}
Note that one cannot expect $\hat u\in \Uad$ if $\Uad\neq L^2(\Omega)$.

If the admissible set $\Uad$ has a minimal point,
i.e., $u_b \in \Uad$ with $u \ge u_b$ for all $u \in \Uad$,
then there exists a Slater point if and only if
$u_b$ is a Slater point.
This follows easily from the monotonicity of $S$,
see \eqref{eq:monotonicity}.

From now on,
we will always assume that \cref{asm:big_list}
is satisfied.

The existence of the Slater point $\hat u$ will not only be used to show optimality conditions. As a side effect, it also
guarantees the existence of solutions.
\begin{theorem}
	\label{thm:existence}
	There exists at least one globally optimal control $\bar u\in \Uad$ to \eqref{eq:P:unreg} with associated optimal state $\bar y\in H_0^1(\Omega)$.
\end{theorem}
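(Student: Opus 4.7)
The plan is to apply the direct method of the calculus of variations. The feasible set is non-empty thanks to the Slater point $\hat u$ from \cref{asm:big_list}~\ref{item:asm_slater}, and the infimum is finite since $J$ is bounded from below. I pick a minimizing sequence $\seq{(y_k, u_k)}_{k \in \N}$ with $u_k \in \Uad$, $y_k = S(u_k)$, $y_k \le y_b$ a.e.\ in $\Omega$, and $J(y_k, u_k) \to \inf$.

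The coercivity of $J$ on $\Uad$ (part of \cref{asm:big_list}~\ref{item:asm_objective}) yields boundedness of $\seq{u_k}$ in $L^2(\Omega)$. After extracting a subsequence, $u_k \weakly \bar u$ in $L^2(\Omega)$, and since $\Uad$ is convex and closed (hence weakly closed), $\bar u \in \Uad$. To pass to the limit in the state equation, I would exploit that the embedding $H_0^1(\Omega) \embeds L^2(\Omega)$ is compact, so by Schauder's theorem the adjoint embedding $L^2(\Omega) \embeds H^{-1}(\Omega)$ is also compact. Therefore $u_k \to \bar u$ strongly in $H^{-1}(\Omega)$, and the Lipschitz continuity of $S \colon H^{-1}(\Omega) \to H_0^1(\Omega)$ from \cref{thm:existenceh1} (applied to the VI, see the Lipschitz continuity property that follows from \eqref{eq:h1bound}) gives $y_k = S(u_k) \to S(\bar u) =: \bar y$ strongly in $H_0^1(\Omega)$.

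It remains to check feasibility of $\bar y$ w.r.t.\ the state constraint and apply lower semi-continuity. Passing to a further subsequence so that $y_k \to \bar y$ pointwise a.e., the inequalities $y_k \le y_b$ pass to the limit, giving $\bar y \le y_b$ a.e.\ in $\Omega$. Finally, the assumed lower semi-continuity of $J$ w.r.t.\ strong convergence in $H_0^1(\Omega)$ and weak convergence in $L^2(\Omega)$ yields
\begin{equation*}
    J(\bar y, \bar u) \le \liminf_{k \to \infty} J(y_k, u_k) = \inf \eqref{eq:P:unreg},
\end{equation*}
so $(\bar y, \bar u)$ is a global minimizer.

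I do not anticipate any real obstacles here; the only mildly delicate point is the nonlinearity of $S$, but it is handled cleanly by the compact embedding $L^2(\Omega) \embeds H^{-1}(\Omega)$ together with Lipschitz continuity of $S$ on $H^{-1}(\Omega)$, which upgrades the weak convergence $u_k \weakly \bar u$ to strong convergence of the states in $H_0^1(\Omega)$.
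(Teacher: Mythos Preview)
Your argument is correct and is precisely the ``standard argument'' the paper invokes without spelling out; the paper's own proof consists of two sentences (feasibility from the Slater point, then ``by standard arguments''). One small citation slip: the Lipschitz continuity of $S \colon H^{-1}(\Omega) \to H_0^1(\Omega)$ is a property of the variational inequality (it follows from the coercivity \eqref{eq:coercivity} by testing the two VIs with each other's solutions), not of \cref{thm:existenceh1}, which treats only the linear equation; the paper uses this Lipschitz property of $S$ explicitly after \cref{thm:direc_diff}.
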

\begin{proof}
	Due to \cref{asm:big_list}~\ref{item:asm_slater}, there exists a
	feasible pair $(\hat y, \hat u)$.
	Then, we infer the existence of an optimal solution $(\bar y,\bar u)$ to Problem \eqref{eq:P:unreg} by standard arguments.
\end{proof}

Note that due to the nonlinearity of the solution operator $S$,
one cannot show uniqueness of the solution.

\section{Primal optimality conditions}
\label{sec:primal_systems}
In this section,
we address necessary optimality conditions for \eqref{eq:P:unreg}
which do not involve dual quantities.
We start by masking the state constraint as a convex control constraint
via \cref{lem:pw_convex}.
\begin{lemma}
	\label{lem:mask_state_constraint}
	We define
	\begin{align*}
		\Ustate
		&:=
		\set{
			u \in L^2(\Omega)
			\given
			S(u) \le y_b
			\;\text{in }\Omega
		}
		,
		\\
		\Ueff
		&:=
		\Uad \cap \Ustate
		=
		\set{
			u \in \Uad
			\given
			S(u) \le y_b
			\;\text{in }\Omega
		}.
	\end{align*}
	The sets $\Ustate, \Ueff \subset L^2(\Omega)$
	are closed and convex.
\end{lemma}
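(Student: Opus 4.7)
The plan is to handle $\Ustate$ first and then obtain the claim for $\Ueff = \Uad \cap \Ustate$ for free, since the intersection of closed convex sets is closed and convex, and $\Uad$ has these properties by \cref{asm:big_list}~\ref{item:asm_Uad}.

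For convexity of $\Ustate$, I would take arbitrary $u_1, u_2 \in \Ustate$ and $\alpha \in (0,1)$. By \cref{lem:pw_convex} applied to the solution operator $S$ of the obstacle problem,
\begin{equation*}
	S(\alpha u_1 + (1-\alpha) u_2)
	\le
	\alpha \, S(u_1) + (1-\alpha) \, S(u_2)
	\le
	\alpha \, y_b + (1-\alpha) \, y_b = y_b
	\quad\text{a.e.\ in } \Omega,
\end{equation*}
so that $\alpha u_1 + (1-\alpha) u_2 \in \Ustate$. This is the step where the pointwise convexity of $S$ (which is special to the obstacle-from-below setting) is absolutely essential; it is the whole reason why the state constraint can be encoded as a convex constraint on the control.

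For closedness of $\Ustate$, I would pick a sequence $\seq{u_k}_{k \in \N} \subset \Ustate$ with $u_k \to u$ in $L^2(\Omega)$. By \cref{thm:solution_VI}, the solution operator $S \colon L^2(\Omega) \to C_0^{0,\alpha}(\Omega)$ is continuous, hence $S(u_k) \to S(u)$ in $C_0(\bar\Omega)$ and in particular pointwise on $\Omega$. Passing to the limit in the pointwise inequality $S(u_k) \le y_b$ yields $S(u) \le y_b$, so $u \in \Ustate$.

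The only mild subtlety is making sure the inequality $S(u) \le y_b$ is interpreted consistently throughout (as a pointwise inequality between continuous functions, which is the natural setting given \cref{thm:solution_VI} and \cref{asm:big_list}~\ref{item:asm_state_const}); once this is fixed, both arguments are routine. I do not anticipate any real obstacle — the work was done in \cref{lem:pw_convex} and \cref{thm:solution_VI}, and this lemma is simply the packaging that turns the state constraint into a convex constraint on $u$, which will be exploited in the subsequent B-stationarity analysis.
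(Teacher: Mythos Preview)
Your proposal is correct and follows exactly the paper's approach: convexity of $\Ustate$ via \cref{lem:pw_convex}, closedness via the continuity of $S\colon L^2(\Omega)\to C_0^{0,\alpha}(\Omega)$ from \cref{thm:solution_VI}, and then $\Ueff$ as the intersection with the closed convex set $\Uad$. The paper's proof is just a two-line summary of precisely these steps.
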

\begin{proof}
	The convexity follows from \cref{lem:pw_convex}
	and the closedness from the continuity of $S\colon L^2(\Omega) \to C_0^{0,\alpha}(\Omega)$,
	see \cref{thm:solution_VI}.
\end{proof}
Using this result,
we can reformulate \eqref{eq:P:unreg}
and obtain the equivalent problem
\begin{equation}
	\label{eq:P:modified}
	\begin{aligned}
		\text{Minimize}&\quad J(S(u),u) \\
		\text{with respect to}&\quad (y,u)\in H_0^1(\Omega)\times L^2(\Omega)\\
		\text{such that}& \quad u \in \Ueff.
	\end{aligned}
\end{equation}
Since the set $\Ueff$ is closed and convex,
we could apply \cite[Theorem~1.1]{Wachsmuth2014:2}
to obtain a system of C-stationarity.
This system contains the normal cone to $\Ueff$
in $L^2(\Omega)$
and it is not immediately clear
how to evaluate this normal cone.
After we have characterized the normal cone
in \cref{lem:normal_cone_via_strong},
we comment on this approach at the end of \cref{sec:strong_stat}.

Another possibility is to use the directional differentiability of $S$
to arrive at a primal optimality system.
\begin{lemma}
	\label{lem:B_stat_first}
	Let $\bar u$ be locally optimal for \eqref{eq:P:unreg}
	with associated state $\bar y=S(\bar u)$.
	Then,
	\begin{equation*}
		\dual{J_y(\bar y, \bar u)}{S'(\bar u; h)} + \innerprod{J_u(\bar y,\bar u)}{h} \ge 0
		\qquad\forall
		h \in \TT_{\Ueff}(\bar u)
	\end{equation*}
	is necessary for the optimality of $\bar u$.
\end{lemma}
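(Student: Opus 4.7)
The plan is to reformulate \eqref{eq:P:unreg} as the unconstrained-looking problem \eqref{eq:P:modified} of minimizing the reduced objective $F(u) := J(S(u),u)$ over the convex set $\Ueff$, and then to apply the classical first-order necessary condition for minimization over a convex set: if $F$ has a suitable directional derivative at $\bar u$, then $F'(\bar u; h) \ge 0$ for every $h$ in the radial cone $\RR_{\Ueff}(\bar u)$, and by a continuity argument the inequality persists on the tangent cone $\TT_{\Ueff}(\bar u)$.

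First, I would fix $h \in \RR_{\Ueff}(\bar u)$, so that there exists $\lambda > 0$ with $\bar u + \lambda h \in \Ueff$. By convexity of $\Ueff$ (\cref{lem:mask_state_constraint}), we have $\bar u + t h \in \Ueff$ for every $t \in (0,\lambda]$, so these admissible perturbations of $\bar u$ remain in the feasible set. Local optimality then gives
\begin{equation*}
    J\bigl(S(\bar u + t h), \bar u + t h\bigr) - J(\bar y, \bar u) \ge 0
\end{equation*}
for all sufficiently small $t > 0$. Next, I would divide by $t$ and pass to the limit $t \searrow 0$. Since $S \colon H^{-1}(\Omega) \to H_0^1(\Omega)$ is Lipschitz continuous and directionally differentiable, it is Hadamard differentiable (as remarked after \cref{thm:direc_diff}); in particular $(S(\bar u + t h) - \bar y)/t \to S'(\bar u; h)$ strongly in $H_0^1(\Omega)$. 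Combined with continuous Fréchet differentiability of $J$ on $H_0^1(\Omega) \times L^2(\Omega)$ (see \cref{asm:big_list}~\ref{item:asm_objective}), a standard chain-rule argument yields
\begin{equation*}
    \dual{J_y(\bar y, \bar u)}{S'(\bar u; h)} + \innerprod{J_u(\bar y, \bar u)}{h} \ge 0,
\end{equation*}
which is the asserted inequality for every $h \in \RR_{\Ueff}(\bar u)$.

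It remains to extend this inequality to the whole tangent cone $\TT_{\Ueff}(\bar u) = \cl\bigl(\RR_{\Ueff}(\bar u)\bigr)$, where the closure is taken in $L^2(\Omega)$. To this end I would show that the left-hand side is continuous in $h$ w.r.t.\ the $L^2(\Omega)$-topology. The second summand is obviously linear and continuous. For the first summand, I would use that $S'(\bar u; \cdot)$ is the solution map of the VI \eqref{eq:direc_diff}, which is Lipschitz continuous from $H^{-1}(\Omega)$ into $H_0^1(\Omega)$ by standard VI theory (cf.\ \cref{thm:existenceh1} and a subtraction argument on the two VIs). Composing with the continuous embedding $L^2(\Omega) \embeds H^{-1}(\Omega)$ gives Lipschitz continuity of $h \mapsto S'(\bar u; h)$ from $L^2(\Omega)$ into $H_0^1(\Omega)$, so pairing with the fixed element $J_y(\bar y, \bar u) \in H^{-1}(\Omega)$ yields the desired continuity. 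Thus the inequality extends from the radial cone to its $L^2$-closure, completing the proof.

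The main obstacle I anticipate is not the convex-analytic passage from the radial cone to the tangent cone, but the justification that $h \mapsto S'(\bar u; h)$ is globally Lipschitz (rather than merely positively homogeneous and sublinear as \cref{cor:conv_direc_deriv} shows). This hinges on the fact that the critical cone $\KK(\bar u)$ appearing in \eqref{eq:direc_diff} does not depend on $h$, so two solutions of \eqref{eq:direc_diff} with right-hand sides $h_1$ and $h_2$ can be tested against each other in the standard way to produce the Lipschitz estimate in $H^{-1}(\Omega) \to H_0^1(\Omega)$. Once this is in hand, everything else is routine.
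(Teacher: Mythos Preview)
Your proof is correct and follows essentially the same route as the paper: establish the inequality on the radial cone $\RR_{\Ueff}(\bar u)$ by differentiating the reduced objective along feasible rays, then extend to $\TT_{\Ueff}(\bar u)$ by density and continuity of $h \mapsto S'(\bar u;h)$. Your justification of the Lipschitz continuity of $S'(\bar u;\cdot)$ via testing the two VIs \eqref{eq:direc_diff} against each other is correct and more explicit than the paper, which simply asserts continuity of the left-hand side.
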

Here,
$J_y(\bar y,\bar u) \in H^{-1}(\Omega)$
and
$J_u(\bar y,\bar u) \in L^2(\Omega)$
are the partial derivatives of $J$
w.r.t.\ $y$ and $u$
evaluated at $(\bar y, \bar u)$.
\begin{proof}
	For any $h \in \RR_{\Ueff}(\bar u)$,
	we have $u + t h \in \Ueff$ for $t > 0$ small enough.
	Thus,
	\begin{equation*}
		J(S(u+t h),u+t h) - J(S(u),u) \ge 0.
	\end{equation*}
	Dividing by $t > 0$ and passing to the limit $t \searrow 0$,
	we obtain
	\begin{equation*}
		\dual{J_y(\bar y,\bar u)}{S'(\bar u; h)} + \innerprod{J_u(\bar y,\bar u)}{h} \ge 0
		\qquad\forall
		h \in \RR_{\Ueff}(\bar u)
		.
	\end{equation*}
	Since $\RR_{\Ueff}(\bar u)$ is dense in $\TT_{\Ueff}(\bar u)$
	and since the left-hand side of the inequality is continuous
	w.r.t.\ $u \in L^2(\Omega)$,
	the claim follows.
\end{proof}

Our next goal is the characterization
of the tangent cone of $\Ueff$.
We start by the investigation of the tangent cone of $\Ustate$.

\begin{theorem}
	\label{thm:tangent_Ustate}
	Let $\bar u \in \Ustate$ be given.
	Then,
	\begin{equation}
		\label{eq:tangent_Ustate}
		\TT_{\Ustate}(\bar u)
		=
		\set{
			h \in L^2(\Omega)
			\given
			S'(\bar u; h) \le 0 \text{ everywhere on } \Omega_b
		}
		,
	\end{equation}
	where $\Omega_b := \set{\bar y = y_b}$ with $\bar y = S(\bar u).$
\end{theorem}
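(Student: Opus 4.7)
The key geometric observation, used for both inclusions, is that $\Omega_b \subset \hat\Omega$, where $\hat\Omega := \set{\bar y \ge y_a + \sigma}$ is the set appearing in \cref{lem:better_differentiability} for any fixed $\sigma \in (0, \tau)$. Indeed, on $\Omega_b$ we have $\bar y = y_b \ge y_a + \tau$ since the Slater condition forces $y_a \le y_b - \tau$. Moreover, $\Omega_b$ is a compact subset of $\Omega$, being closed in $\bar\Omega$ and disjoint from $\Gamma$ (since $y_b > 0 = \bar y$ on $\Gamma$). We further fix an exponent $q' > n$ for which $L^2(\Omega) \embeds W^{-1,q'}(\Omega)$, so that \cref{lem:better_differentiability} is applicable to $L^2$-directions.

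For the inclusion "$\subseteq$", let $h \in \RR_{\Ustate}(\bar u)$, so that $S(\bar u + t h) \le y_b$ for all sufficiently small $t > 0$. Restricted to $\Omega_b$, this reads $(S(\bar u + t h) - \bar y)/t \le 0$, and \cref{lem:better_differentiability}~\ref{item:better_differentiability_2}, applied to the constant sequence $h_k = h$, yields uniform convergence of these difference quotients to $S'(\bar u; h)$ on $\hat\Omega$. Hence $S'(\bar u; h) \le 0$ everywhere on $\Omega_b$. The extension to arbitrary $h \in \TT_{\Ustate}(\bar u) = \cl \RR_{\Ustate}(\bar u)$ is then immediate from the continuity of $L^2(\Omega) \ni h \mapsto S'(\bar u; h) \in L^\infty(\hat\Omega)$ provided by \cref{lem:better_differentiability}~\ref{item:better_differentiability_1}.

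For the reverse inclusion "$\supseteq$", let $h \in L^2(\Omega)$ satisfy $S'(\bar u; h) \le 0$ on $\Omega_b$. I would show that $h_\epsilon := h + \epsilon(\hat u - \bar u) \in \RR_{\Ustate}(\bar u)$ for every $\epsilon \in (0,1)$, where $\hat u \in \Uad$ is the Slater point with $S(\hat u) \le y_b - \tau$; since $h_\epsilon \to h$ in $L^2(\Omega)$, this yields $h \in \TT_{\Ustate}(\bar u)$. The decomposition
\begin{equation*}
	\bar u + t h_\epsilon = (1 - t\epsilon) \parens[\big]{\bar u + s h} + t\epsilon \, \hat u,
	\qquad s := \frac{t}{1 - t\epsilon},
\end{equation*}
combined with the pointwise convexity of $S$ (\cref{lem:pw_convex}) and $S(\hat u) \le y_b - \tau$ reduces the required $S(\bar u + t h_\epsilon) \le y_b$ to the pointwise estimate
\begin{equation*}
	S(\bar u + s h) \le y_b + s \, \epsilon \, \tau \qquad \text{on } \Omega
\end{equation*}
for all sufficiently small $s > 0$. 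To verify this, I partition $\Omega$ through an open neighborhood $N \subset \hat\Omega$ of $\Omega_b$ chosen so small---exploiting continuity of $S'(\bar u; h)$ on $\hat\Omega$ and compactness of $\Omega_b$---that $S'(\bar u; h) \le \epsilon\tau/2$ on $N$. On $N$, the uniform first-order expansion $S(\bar u + s h) = \bar y + s\, S'(\bar u; h) + o(s)$ from \cref{lem:better_differentiability}~\ref{item:better_differentiability_2} together with $\bar y \le y_b$ gives the estimate for small $s$; on the compact complement $\bar\Omega \setminus N$, the continuous function $y_b - \bar y$ attains a positive minimum, and the Lipschitz bound of \cref{lem:lipschitz_with_w1p} yields $S(\bar u + s h) \le \bar y + C s \|h\|_{L^2} \le y_b$ once $s$ is small enough. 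The main obstacle is the \emph{pointwise everywhere} (rather than a.e.) nature of the estimate, which forces the uniform-convergence machinery on $\hat\Omega$; it is precisely the Slater perturbation $\hat u - \bar u$ that supplies the slack $\epsilon\tau$ needed to absorb the $o(s)$-remainders produced by the first-order expansion.
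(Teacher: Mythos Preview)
Your argument is correct and uses the same ingredients as the paper (uniform convergence of the difference quotients on $\hat\Omega$, the Slater point, and the pointwise convexity of $S$), but the organization of both inclusions differs.

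For ``$\subset$'', the paper works directly with an arbitrary tangent direction $h$: it takes sequences $t_k\searrow 0$, $h_k\to h$ with $\bar u+t_k h_k\in\Ustate$ and applies \cref{lem:better_differentiability}~\ref{item:better_differentiability_2} once to the \emph{varying} $h_k$. You instead first treat radial directions (constant $h$) and then pass to the closure via the Lipschitz estimate \cref{lem:better_differentiability}~\ref{item:better_differentiability_1}. Both are equally short; the paper's version avoids the two-step reduction.

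For ``$\supset$'', the difference is more substantial. The paper constructs an approximating sequence $h_k\to h$ with $\bar u+t_k h_k\in\Ustate$ by monitoring the maxima $d_k:=\sup_{\Omega_k}S'(\bar u;h)$ on shrinking neighborhoods $\Omega_k:=\{\bar y\ge y_b-s_k\}$ of $\Omega_b$, together with the uniform remainder $r_k$, and then mixes in the Slater direction with the adaptive weight $\alpha_k=t_k(d_k+r_k)/\tau$. Your route is cleaner: you fix $\epsilon$, take a \emph{single} open neighborhood $N\subset\hat\Omega$ of $\Omega_b$ on which $S'(\bar u;h)\le\epsilon\tau/2$, and show directly that $h_\epsilon=h+\epsilon(\hat u-\bar u)$ is a radial direction. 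This avoids the moving sets $\Omega_k$ and the sequence $d_k$ entirely; the convex decomposition $\bar u+th_\epsilon=(1-t\epsilon)(\bar u+sh)+t\epsilon\,\hat u$ does the same job as the paper's mixing, but with a fixed (rather than vanishing) Slater coefficient. The price is that the threshold on $s$ depends on $\epsilon$, which is harmless since $\epsilon$ is fixed. In short: both proofs exploit exactly the same estimates, but yours packages the Slater correction once per $\epsilon$, while the paper's lets the correction vanish along a single sequence.
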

Note that $S'(\bar u; h)$ is continuous in the neighborhood
$\set{\bar y \ge y_a + \zeta/2}$
of
$\Omega_b$
via \cref{lem:better_differentiability}.
Thus, the inequality can be understood in an
``everywhere''-sense.
\begin{proof}
	``$\subset$'':
	Let $h \in \TT_{\Ustate}(\bar u)$ be given.
	Then, there are sequences
	$\seq{u_k}_{k \in \N} \subset \Ustate$, $\seq{t_k}_{k \in \N} \subset \R^+$
	such that
	$u_k \to \bar u$, $t_k \searrow 0$
	and $h_k := (u_k - \bar u) / t_k \to h$ in $L^2(\Omega)$.
	We define $y_k := S(u_k) = S(\bar u + t_k h_k)$.
	Then,
	$0 \ge \frac{y_k - \bar y}{t_k}$
	holds everywhere on $\Omega_b$.
	Due to \cref{lem:better_differentiability}~\ref{item:better_differentiability_2},
	the right-hand side
	converges uniformly on $\Omega_b$ towards $S'(\bar u; h)$.
	Hence, $S'(\bar u; h) \le 0$ on $\Omega_b$.

	``$\supset$'':
	Let $h \in L^2(\Omega)$ with $S'(\bar u; h) \le 0$ on $\Omega_b$
	be given.
	In case $\Omega_b = \emptyset$,
	the continuous functions $\bar y$ and $y_b$ have a positive distance.
	Therefore,
	the claim follows from the continuity of $S \colon L^2(\Omega) \to C_0^{0,\alpha}(\Omega)$,
	see \cref{thm:solution_VI}.

	Otherwise, let $\seq{t_k}_{k \in \N} \subset \R^+$ with $t_k \searrow 0$ be given.
	Due to the continuity of $S$ from $L^2(\Omega)$
	to $C_0^{0,\alpha}(\Omega)$,
	there exists a sequence $\seq{s_k}_{k \in \N} \subset \R^+$, $s_k \searrow 0$,
	such that
	\begin{equation*}
		S(\bar u + t_k h) \le S(\bar u) + s_k
		\qquad
		\text{on } \Omega.
	\end{equation*}
	W.l.o.g.\ we assume $s_k \le \zeta/3$.
	Therefore,
	the sets
	\begin{equation*}
		\Omega_k :=
		\set{\bar y \ge y_b - s_k}
		,
		\quad
		\hat\Omega :=
		\set{\bar y \ge y_a + \zeta / 3}
	\end{equation*}
	satisfy
	$\Omega_b \subset \Omega_k \subset \hat\Omega$
	for all $k \in \N$.
	We define the scalar sequence
	\begin{equation*}
		d_k
		:=
		\sup\set{
			S'(\bar u; h)(x)
			\given
			x \in \Omega_k
		}
		\ge
		0.
	\end{equation*}
	We claim that $d_k \searrow 0$.
	Indeed, otherwise we would get a sequence $\seq{x_k}_{k \in \N}$
	with $x_k \in \Omega_k$ and $S'(\bar u; h)(x_k) \ge \varepsilon > 0$.
	This sequence has accumulation points
	and due to continuity, all accumulation points $\bar x$
	satisfy $S'(\bar u; h)(\bar x) \ge \varepsilon > 0$ and $\bar y(\bar x)  \ge y_b(\bar x)$, i.e., $\bar x \in \Omega_b$.
	This is a contradiction to $S'(\bar u; h) \le 0$ on $\Omega_b$.

	Due to \cref{lem:better_differentiability}~\ref{item:better_differentiability_2},
	\begin{equation*}
		r_k
		:=
		\norm*{
			\frac{ S(\bar u + t_k h) - \bar y }{t_k}
			-
			S'(\bar u; h)
		}_{C(\hat \Omega)}
		\searrow
		0
		.
	\end{equation*}
	Now we have
	\begin{align*}
		S(\bar u + t_k h)
		&
		\le
		S(\bar u) + t_k S'(\bar u; h) + t_k r_k
		\le
		y_b + t_k \, \parens{ d_k + r_k }
		&& \text{on }\Omega_k \subset \hat \Omega,
		\\
		S(\bar u + t_k h)
		&
		\le
		S(\bar u) + s_k
		\le
		y_b
		&& \text{on }\Omega \setminus \Omega_k.
	\end{align*}
	Next, we use the Slater point $\hat u \in L^2(\Omega)$,
	i.e., $S(\hat u) \le y_b - \tau$ for some $\tau > 0$.
	We set
	\begin{equation*}
		h_k
		:=
		(1-\alpha_k) h
		+
		\frac{\alpha_k}{t_k} \, (\hat u - \bar u)
		,
		\qquad
		\alpha_k := \frac{d_k + r_k}{\tau} t_k
		.
	\end{equation*}
	From $\alpha_k/t_k \to 0$ we get $h_k \to h$ in $L^2(\Omega)$.
	Moreover,
	for $k$ large enough we
	have $\alpha_k \in (0,1)$
	and
	via \cref{lem:pw_convex}
	we obtain
	\begin{align*}
		S(\bar u + t_k h_k)
		&=
		S\parens[\big]{
			(1-\alpha_k) \, (\bar u + t_k h)
			+
			\alpha_k \hat u
		}
		\\
		&\le
		(1-\alpha_k) S(\bar u + t_k h)
		+
		\alpha_k S(\hat u)
		\\
		&\le
		(1-\alpha_k) \, ( y_b + t_k \, (d_k + r_k))
		+
		\alpha_k \, (y_b - \tau)
		\\
		&=
		y_b
		+
		(1-\alpha_k) t_k \, (d_k + r_k)
		-\alpha_k \tau
		\\
		&\le
		y_b
		+
		t_k \, (d_k + r_k)
		-\alpha_k \tau
		=
		y_b
		\qquad\text{on } \Omega.
	\end{align*}
	This shows $\bar u + t_k h_k \in \Ustate$.
	Together with
	$h_k \to h$ in $L^2(\Omega)$
	we get $h \in \TT_{\Ustate}(\bar u)$.
\end{proof}

Using the Slater point again,
we can characterize
the tangent cone and normal cone
to
$\Ueff = \Uad \cap \Ustate$.

\begin{theorem}
	\label{thm:cones_Ueff}
	Let $\bar u \in \Ueff$ be given.
	Then,
	\begin{align*}
		\TT_{\Ueff}(\bar u)
		&=
		\TT_{\Uad}(\bar u)
		\cap
		\TT_{\Ustate}(\bar u)
		,
		&
		\NN_{\Ueff}(\bar u)
		&=
		\NN_{\Uad}(\bar u)
		+
		\NN_{\Ustate}(\bar u)
		.
	\end{align*}
\end{theorem}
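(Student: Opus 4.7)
The plan is to handle the two trivial inclusions directly and to deal with the two non-trivial ones via (i) a Slater-type perturbation argument that mirrors the proof of \cref{thm:tangent_Ustate}, and (ii) polar cone duality under a Robinson-type constraint qualification coming from the Slater condition. The inclusion $\TT_{\Ueff}(\bar u) \subset \TT_{\Uad}(\bar u) \cap \TT_{\Ustate}(\bar u)$ is immediate from $\Ueff \subset \Uad \cap \Ustate$ and the monotonicity of tangent cones. Since $\Ueff \subset \Uad$ and $\Ueff \subset \Ustate$ also give $\NN_{\Uad}(\bar u) \subset \NN_{\Ueff}(\bar u)$ and $\NN_{\Ustate}(\bar u) \subset \NN_{\Ueff}(\bar u)$, the inclusion $\NN_{\Uad}(\bar u) + \NN_{\Ustate}(\bar u) \subset \NN_{\Ueff}(\bar u)$ follows because $\NN_{\Ueff}(\bar u)$ is a convex cone.

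For $\TT_{\Uad}(\bar u) \cap \TT_{\Ustate}(\bar u) \subset \TT_{\Ueff}(\bar u)$, I would take $h$ in the intersection and, using the convexity of $\Uad$, choose $t_k \searrow 0$ and $h_k \to h$ in $L^2(\Omega)$ with $u_k := \bar u + t_k h_k \in \Uad$. I then apply the Slater correction
\begin{equation*}
\tilde u_k := (1-\alpha_k) u_k + \alpha_k \hat u, \qquad \tilde h_k := \frac{\tilde u_k - \bar u}{t_k} = (1-\alpha_k) h_k + \frac{\alpha_k}{t_k}(\hat u - \bar u),
\end{equation*}
with $\alpha_k := \tau^{-1} t_k (d_k + r_k)$, where $d_k, r_k \searrow 0$ are defined exactly as in the proof of \cref{thm:tangent_Ustate}. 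Convexity of $\Uad$ gives $\tilde u_k \in \Uad$, while \cref{lem:pw_convex} together with $S(\hat u) \le y_b - \tau$ yields $S(\tilde u_k) \le (1-\alpha_k) S(u_k) + \alpha_k (y_b - \tau)$. The uniform bound $S(u_k) \le \bar y + s_k$ on $\Omega$ from \cref{thm:solution_VI} combined with the uniform convergence $[S(u_k) - \bar y]/t_k \to S'(\bar u; h)$ on a neighborhood of $\Omega_b := \set{\bar y = y_b}$ from \cref{lem:better_differentiability}~\ref{item:better_differentiability_2} (which applies since $L^2(\Omega) \embeds W^{-1,q'}(\Omega)$ for $q' \in (n, 6]$) makes the two-region estimate from the proof of \cref{thm:tangent_Ustate} carry over, yielding $S(\tilde u_k) \le y_b$ for all sufficiently large $k$, i.e.\ $\tilde u_k \in \Ueff$. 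Since $\alpha_k / t_k \to 0$, we have $\tilde h_k \to h$ in $L^2(\Omega)$, and therefore $h \in \TT_{\Ueff}(\bar u)$.

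For $\NN_{\Ueff}(\bar u) \subset \NN_{\Uad}(\bar u) + \NN_{\Ustate}(\bar u)$, I would first note that the continuity of $S \colon L^2(\Omega) \to C(\bar\Omega)$ from \cref{thm:solution_VI} together with the strict inequality $S(\hat u) \le y_b - \tau$ places $\hat u$ in the $L^2$-interior of $\Ustate$. This gives $\hat u - \bar u \in \operatorname{int} \TT_{\Ustate}(\bar u)$, and combined with the trivial $\hat u - \bar u \in \TT_{\Uad}(\bar u)$, one obtains the Robinson-type CQ $0 \in \operatorname{int}(\TT_{\Uad}(\bar u) - \TT_{\Ustate}(\bar u))$. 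Under this CQ, the classical polar cone formula
\begin{equation*}
\parens[\big]{\TT_{\Uad}(\bar u) \cap \TT_{\Ustate}(\bar u)}\polar = \TT_{\Uad}(\bar u)\polar + \TT_{\Ustate}(\bar u)\polar
\end{equation*}
holds with the sum on the right already closed. Combined with the tangent-cone identity just proved, this gives the claim.

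The hard part is the tangent-cone step: unlike in \cref{thm:tangent_Ustate}, the direction $h$ here lies only in the closure of the radial cone of $\Uad$, so one must carry the construction out with a variable approximating sequence $h_k$. The required uniform convergence of the difference quotients for variable directions is precisely what \cref{lem:better_differentiability}~\ref{item:better_differentiability_2} supplies, and balancing the Slater correction against both the $O(t_k)$ violation on the critical strip around $\Omega_b$ and the $O(s_k)$ violation outside while preserving $\alpha_k / t_k \to 0$ is the crux of the argument.
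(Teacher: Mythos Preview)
Your argument is correct, but it takes a substantially longer route than the paper. The paper observes, exactly as you do, that the Slater point $\hat u$ lies in the $L^2$-interior of $\Ustate$ by the continuity of $S\colon L^2(\Omega)\to C_0^{0,\alpha}(\Omega)$; but from there it applies the subdifferential sum rule \cite[Corollary~16.38]{BauschkeCombettes2011} directly to $\delta_{\Ueff}=\delta_{\Uad}+\delta_{\Ustate}$ to obtain the normal-cone identity first, and then derives the tangent-cone identity by a one-line polarization via the bipolar theorem. In particular, the paper never needs a constructive tangent-cone argument.

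Your detour through the tangent cone---redoing the proof of \cref{thm:tangent_Ustate} with a variable approximating sequence $h_k$ and invoking \cref{lem:better_differentiability}~\ref{item:better_differentiability_2}---is valid and self-contained, but it is superfluous once you have the interior-point condition: the Robinson CQ you invoke for the normal-cone step is precisely the hypothesis of the sum rule, so you could have obtained $\NN_{\Ueff}(\bar u)=\NN_{\Uad}(\bar u)+\NN_{\Ustate}(\bar u)$ first and then polarized, bypassing the constructive step entirely. What your approach buys is an explicit tangent-direction construction within $\Ueff$, which could be useful elsewhere; what the paper's approach buys is brevity.
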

\begin{proof}
	Due to the continuity of $S \colon L^2(\Omega) \to C_0^{0,\alpha}(\Omega)$,
	see \cref{thm:solution_VI},
	the Slater point $\hat u$ is an interior point of $\Ustate$
	and belongs to $\Uad$.
	Hence, we can apply
	the sum rule of convex analysis \cite[Corollary~16.38]{BauschkeCombettes2011}
	to the indicator function $\delta_{\Ueff} = \delta_{\Uad} \cap \delta_{\Ustate}$
	and obtain
	\begin{equation*}
		\NN_{\Ueff}(\bar u)
		=
		\partial \delta_{\Ueff}(\bar u)
		=
		\partial \delta_{\Uad}(\bar u)
		+
		\partial \delta_{\Ustate}(\bar u)
		=
		\NN_{\Uad}(\bar u)
		+
		\NN_{\Ustate}(\bar u).
	\end{equation*}
	The tangent cone can be obtained by polarization via the bipolar theorem.
\end{proof}
Together with \cref{lem:B_stat_first},
we obtain the following optimality condition.

\begin{theorem}
	\label{thm:B_stat_second}
	Every locally optimal solution $\bar u$ of \eqref{eq:P:unreg}
	satisfies
	\begin{equation}
		\label{eq:B_stat_second}
		\dual{J_y(\bar y,\bar u)}{S'(\bar u; h)} + \innerprod{J_u(\bar y,\bar u)}{h} \ge 0
		\qquad\forall
		h \in \TT_{\Uad}(\bar u),
		S'(\bar u; h) \le 0 \text{ on } \Omega_b,
	\end{equation}
	where
	$\Omega_b := \set{\bar y = y_b}$
	and
	$\bar y := S(\bar u)$.
\end{theorem}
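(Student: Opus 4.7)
The proof is essentially an assembly of three results that have already been prepared in this section, so the work has been frontloaded and only the final concatenation remains. My plan is the following.

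First, I would invoke \cref{lem:B_stat_first} at the local minimizer $\bar u$, which yields the variational inequality
\begin{equation*}
    \dual{J_y(\bar y,\bar u)}{S'(\bar u; h)} + \innerprod{J_u(\bar y,\bar u)}{h} \ge 0
    \qquad\forall h \in \TT_{\Ueff}(\bar u).
\end{equation*}
This is the abstract form of B-stationarity for the reformulated problem \eqref{eq:P:modified}; all that remains is to express the tangent cone $\TT_{\Ueff}(\bar u)$ in terms of the primal data.

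Second, I would decompose the tangent cone using \cref{thm:cones_Ueff}, which asserts $\TT_{\Ueff}(\bar u) = \TT_{\Uad}(\bar u) \cap \TT_{\Ustate}(\bar u)$. The validity of this intersection formula hinges on the Slater point $\hat u$ being an interior point of $\Ustate$ (via the continuity of $S \colon L^2(\Omega) \to C_0^{0,\alpha}(\Omega)$), but that has already been handled in \cref{thm:cones_Ueff} and does not need to be revisited here.

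Third, I would substitute the explicit characterization of $\TT_{\Ustate}(\bar u)$ from \cref{thm:tangent_Ustate}, namely that $h \in \TT_{\Ustate}(\bar u)$ if and only if $S'(\bar u; h) \le 0$ everywhere on $\Omega_b = \set{\bar y = y_b}$. Combining this with the condition $h \in \TT_{\Uad}(\bar u)$ produces exactly \eqref{eq:B_stat_second}. One small remark I would add is that the pointwise inequality $S'(\bar u; h) \le 0$ on $\Omega_b$ makes sense in the everywhere (as opposed to q.e.\ or a.e.) sense because \cref{lem:better_differentiability}~\ref{item:better_differentiability_2} guarantees that $S'(\bar u; h)$ is continuous on a neighborhood of $\Omega_b$, where the state $\bar y$ is strictly above the obstacle $y_a$. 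There is no genuine obstacle in the proof; the theorem is a corollary of the three results just cited.
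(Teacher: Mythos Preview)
Your proposal is correct and matches the paper's approach exactly: the theorem is stated immediately after \cref{thm:cones_Ueff} with the remark ``Together with \cref{lem:B_stat_first}, we obtain the following optimality condition,'' and no further proof is given. Your three-step assembly of \cref{lem:B_stat_first}, \cref{thm:cones_Ueff}, and \cref{thm:tangent_Ustate} is precisely what the paper intends.
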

Although we have derived a characterization of the tangent cone
of $\Ustate$, see \cref{thm:tangent_Ustate},
this cannot be employed
to obtain an expression for the normal cone,
due to the nonlinearity of $S'(\bar u; \cdot)$.
Even if an explicit formula for this normal cone would be available,
the primal optimality condition \eqref{eq:B_stat_second}
cannot be turned directly into a dual optimality condition,
since the left-hand side in \eqref{eq:B_stat_second}
depends nonlinearly on $h$.
We mention that a formula for $\NN_{\Ustate}(\bar u)$
will be given in
\cref{lem:normal_cone_via_strong} below.

\section{Dual optimality conditions via regularization}
\label{sec:regularization}
In this section,
we are going to derive
optimality conditions
which include multipliers
via a regularization procedure.
We will prove the following theorem.
\begin{theorem}
	\label{thm:c_stationarity}
	Every local solution $(\bar y, \bar u)$
	of \eqref{eq:P:unreg}
	is C-stationary,
	i.e., there exist
	multipliers
	$p \in W_0^{1,q}(\Omega)$,
	$\mu \in H^{-1}(\Omega)$,
	$\nu \in \MM(\Omega)^+$,
	$\lambda \in L^2(\Omega)$
	such that
	$p \in H^1(\hat\Omega_a)$
	for some open $\hat\Omega_a \supset \set{\bar y = y_a}$
	and
	such that the system
	\begin{subequations}
		\label{eq:c_stat}
		\begin{align}
			\label{eq:c_stat_1}
			\AA\adjoint p + J_y(\bar y,\bar u) + \nu + \mu &= 0,
			\\
			\label{eq:c_stat_2}
			J_u(\bar y,\bar u) + \lambda - p &= 0,
			\\
			\label{eq:c_stat_3}
			p &= 0 \text{ q.e.\ on } \qsupp(\bar \xi),
			\\
			\label{eq:c_stat_4}
			\dual{\mu}{v}_{H_0^1(\Omega)}
			&= 0 \quad\forall v \in H_0^1(\Omega), v = 0 \text{ q.e.\ on } \set{\bar y = y_a},
			\\
			\label{eq:c_stat_45}
			\dual{\mu}{\Phi p}_{H_0^1(\Omega)} &\ge 0
			\quad
			\forall \Phi \in W^{1,\infty}(\Omega)^+,\, \Phi_{|_{\Omega \setminus \hat\Omega_a}}=0,
			\\
			\label{eq:c_stat_5}
			\supp(\nu) &\subset \Omega_b,
			\\
			\label{eq:c_stat_6}
			\lambda &\in \NN_{\Uad}(\bar u)
		\end{align}
	\end{subequations}
	is satisfied.
	Here,
	the adjoint equation is to be understood in the very weak sense,
	see \eqref{eq:adjveryweak2},
	and $q \in (1, n / (n - 1))$ can be chosen arbitrarily.
	Note that $\Phi p\in H^1_0(\Omega)$ even though $p$ itself is only in $W_0^{1,p}(\Omega)$.
\end{theorem}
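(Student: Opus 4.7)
My plan is to regularize the state constraint by Moreau-Yosida penalization while keeping the obstacle problem untouched, so that the resulting state-constraint-free problems fall under the framework of \cite{Wachsmuth2014:2}, and then to pass to the limit. Concretely, for $\alpha > 0$ I would minimize
\begin{equation*}
  J(S(u),u) + \frac{1}{2\alpha}\norm{\max(0, S(u)-y_b)}_{L^2(\Omega)}^2 + \frac{1}{2}\norm{u-\bar u}_{L^2(\Omega)}^2
\end{equation*}
over $u \in \Uad$ intersected with a closed $L^2$-ball $B_\rho(\bar u)$ where $\rho > 0$ is fixed such that $\bar u$ is globally optimal for \eqref{eq:P:unreg} on $B_\rho(\bar u)$. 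Existence of a minimizer $u_\alpha$ with state $y_\alpha := S(u_\alpha)$ follows as in \cref{thm:existence}. Using the feasibility of $\bar u$ to bound the penalized objective uniformly, the weak lower semicontinuity of the quadratic term, and the continuity of $S \colon L^2(\Omega) \to C_0^{0,\alpha}(\Omega)$ from \cref{thm:solution_VI}, a standard Barbu-type argument yields $u_\alpha \to \bar u$ strongly in $L^2(\Omega)$ and $y_\alpha \to \bar y$ uniformly on $\bar\Omega$; in particular the ball constraint becomes inactive for small $\alpha$.

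For each $\alpha$, the regularized problem is state-constraint-free and \cite{Wachsmuth2014:2} provides multipliers $p_\alpha \in H_0^1(\Omega)$, $\mu_\alpha \in H^{-1}(\Omega)$, $\lambda_\alpha \in L^2(\Omega)$ with $\nu_\alpha := \alpha^{-1}\max(0, y_\alpha - y_b) \in L^2(\Omega)^+$ satisfying a regularized analogue of \eqref{eq:c_stat}: the adjoint equation $\AA\adjoint p_\alpha + J_y(y_\alpha, u_\alpha) + \nu_\alpha + \mu_\alpha = 0$ in $H^{-1}(\Omega)$, the control equation $J_u(y_\alpha,u_\alpha) + (u_\alpha - \bar u) + \lambda_\alpha - p_\alpha = 0$ with $\lambda_\alpha \in \NN_{\Uad}(u_\alpha)$, the quasi-support condition $p_\alpha = 0$ q.e.\ on $\qsupp(\xi_\alpha)$ with $\xi_\alpha := u_\alpha - \AA y_\alpha$, the obstacle condition \eqref{eq:c_stat_4} for $\mu_\alpha$ on $\set{y_\alpha = y_a}$, and the sign condition $\dual{\mu_\alpha}{\Phi p_\alpha}_{H_0^1(\Omega)} \ge 0$ for admissible $\Phi$. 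To bound $\nu_\alpha$, I would test the first-order inequality of the regularized problem with $h := \hat u - u_\alpha \in \TT_{\Uad}(u_\alpha)$, use $S'(u_\alpha; h) \le S(\hat u) - y_\alpha \le y_b - y_\alpha - \tau$ from \cref{cor:conv_direc_deriv} together with the Slater property, and combine this with $\nu_\alpha(y_b - y_\alpha) \le 0$ a.e.\ to obtain $\tau \norm{\nu_\alpha}_{L^1(\Omega)} \le C$; thus $\nu_\alpha$ is uniformly bounded in $\MM(\Omega)$. The very weak adjoint theory of \cref{thm:existenceadjoint}, applied to \eqref{eq:adjveryweak2} with the splitting $J_y(y_\alpha,u_\alpha) + \mu_\alpha \in H^{-1}(\Omega)$ and $\nu_\alpha \in \MM(\Omega)$, then yields a uniform $W_0^{1,q}(\Omega)$-bound on $p_\alpha$, and $\lambda_\alpha$ is bounded in $L^2(\Omega)$ by the control equation.

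Passing to subsequences, $\nu_\alpha \weaklystar \nu$ in $\MM(\Omega)^+$, $p_\alpha \to p$ strongly in $W_0^{1,q}(\Omega)$ by the compactness statement of \cref{thm:existenceadjoint}, and $\lambda_\alpha \weakly \lambda$ in $L^2(\Omega)$. Then \eqref{eq:c_stat_2} is immediate, \eqref{eq:c_stat_6} is standard convex analysis, \eqref{eq:c_stat_1} passes to the limit in the very weak sense \eqref{eq:adjveryweak2} which defines $\mu$ via the splitting, and \eqref{eq:c_stat_5} follows from $\supp \nu_\alpha \subset \set{y_\alpha > y_b}$ together with uniform convergence $y_\alpha \to \bar y$; the conditions \eqref{eq:c_stat_3} and \eqref{eq:c_stat_4} are inherited by capacity-theoretic arguments as in \cite{Wachsmuth2014:2}. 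For the higher regularity claim, I exploit that $\set{\bar y = y_a}$ and $\Omega_b$ have positive distance (by \cref{asm:big_list}\ref{item:asm_state_const} and Slater) to choose an open $\hat\Omega_a \supset \set{\bar y = y_a}$ and a cutoff $\varphi \in C^\infty(\R^n)$ with $\varphi \equiv 1$ on $\hat\Omega_a$ and $\varphi \equiv 0$ on a neighborhood of $\Omega_b$. Splitting $p = p^{(1)} + p^{(2)}$ via $\AA\adjoint p^{(1)} = -\nu$ and $\AA\adjoint p^{(2)} = -J_y(\bar y,\bar u) - \mu$, \cref{lem:bound_adjoint} gives $\varphi p^{(1)} \in H_0^1(\Omega)$ from the localized measure $\nu$, while $p^{(2)} \in H_0^1(\Omega)$ by Lax-Milgram once $\mu \in H^{-1}(\Omega)$ is established; combining the two yields $p \in H^1(\hat\Omega_a)$.

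The main obstacle is the $H^{-1}(\Omega)$-regularity of $\mu$ together with the passage to the limit in the sign condition \eqref{eq:c_stat_45}: uniform $H^{-1}$-bounds on $\mu_\alpha$ are not directly available because $\nu_\alpha \in L^2(\Omega)$ is not uniformly bounded in $H^{-1}(\Omega)$. The remedy is to exploit that $\mu_\alpha$ is, in a capacity sense, concentrated on $\set{y_\alpha = y_a}$, which has positive distance to $\Omega_b$; after localization with $\varphi$, the $\nu_\alpha$-contribution to $\mu_\alpha$ disappears and one obtains uniform $H^{-1}(\Omega)$-bounds on the localized quantity, which suffices to identify $\mu$ as an element of $H^{-1}(\Omega)$ through the very weak formulation and to pass to the limit in $\dual{\mu_\alpha}{\Phi p_\alpha} \ge 0$ via the strong convergence $\Phi p_\alpha \to \Phi p$ in $H_0^1(\Omega)$ delivered by \cref{lem:bound_adjoint}.
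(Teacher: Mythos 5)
Your overall strategy is the same as the paper's: penalize only the state constraint (plus the localization term $\tfrac12\norm{u-\bar u}^2$), apply the C-stationarity system of \cite{Wachsmuth2014:2} to the regularized problems, bound $\nu_\alpha$ in $L^1(\Omega)$ via the Slater point and \cref{cor:conv_direc_deriv}, and pass to the limit using the uniform separation of the active sets. The convergence of the primal quantities, the $L^1$-bound on $\nu_\alpha$, the complementarity \eqref{eq:c_stat_5}, and the localization idea behind $p\in H^1(\hat\Omega_a)$ all match the paper. However, there are two genuine gaps, both concerning the multiplier $\mu_\alpha$.

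First, the uniform $H^{-1}(\Omega)$-bound on $\mu_\alpha$. Your proposed remedy, namely that ``after localization with $\varphi$ the $\nu_\alpha$-contribution to $\mu_\alpha$ disappears,'' is circular as stated: from the adjoint equation one has $\dual{\mu_\alpha}{v}=\dual{\mu_\alpha}{\varphi v}=-\dual{\AA\adjoint p_\alpha+J_y(y_\alpha,u_\alpha)}{\varphi v}$, so estimating $\norm{\mu_\alpha}_{H^{-1}(\Omega)}$ this way requires an a priori $H^1$-bound on $p_\alpha$ near $\supp\varphi$, which is exactly what is not yet available without a bound on $\mu_\alpha$. The paper breaks this circle in \cref{lem:bound_p_mu} by the three-way splitting $p_\alpha=p_\alpha^y+p_\alpha^\mu+p_\alpha^\nu$ according to the three right-hand sides, bounding $p_\alpha^y$ in $H_0^1(\Omega)$ and $\varphi p_\alpha^\nu$ in $H_0^1(\Omega)$ via \cref{lem:bound_adjoint}, and then invoking coercivity together with the sign condition \eqref{eq:strong_stationarity_low_6} with $\Phi\equiv 1$ to get $\gamma_1\norm{p_\alpha^\mu}_{H_0^1(\Omega)}^2\le-\dual{\mu_\alpha}{p_\alpha}+\dual{\mu_\alpha}{p_\alpha^y}+\dual{\mu_\alpha}{\varphi p_\alpha^\nu}\le C\norm{\mu_\alpha}_{H^{-1}(\Omega)}$, which closes because $\norm{\mu_\alpha}_{H^{-1}(\Omega)}$ is comparable to $\norm{p_\alpha^\mu}_{H_0^1(\Omega)}$. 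The use of the sign condition with $\Phi\equiv1$ is the missing ingredient; the localization alone does not yield the estimate. Second, the limit passage in \eqref{eq:c_stat_45}: you claim $\Phi p_\alpha\to\Phi p$ strongly in $H_0^1(\Omega)$ ``delivered by \cref{lem:bound_adjoint},'' but that lemma only provides strong convergence for the part of the adjoint state driven by the measures $\nu_\alpha$, whose supports stay away from $\hat\Omega_a$. The part $p_\alpha^\mu$ converges only weakly in $H_0^1(\Omega)$, since $\mu_\alpha\weakly\mu$ only weakly in $H^{-1}(\Omega)$, so the term $\dual{\mu_\alpha}{\Phi p_\alpha^\mu}=\dual{-\AA\adjoint p_\alpha^\mu}{\Phi p_\alpha^\mu}$ is quadratic in a weakly convergent quantity; the paper handles it in \cref{lem:sign_p_mu} via the upper semicontinuity argument of \cite[Lemma~4.5]{Wachsmuth2014:2} rather than by strong convergence. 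Both gaps are repaired by the same device, the splitting of $p_\alpha$ into its $J_y$-, $\mu$- and $\nu$-driven parts, which is absent from your plan.
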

The proof of this theorem
is divided into several steps,
which will be addressed in the remaining part
of this section:
\begin{itemize}
	\item
		\cref{subsec:reg_aux}:
		Existence of solutions and optimality condition
		for regularized problems.
	\item
		\cref{subsec:boundedness_mult}:
		Boundedness of the multipliers of the regularized optimality system.
	\item
		\cref{subsec:passage_limit}:
		Passage to the limit in the optimality system.
\end{itemize}

Throughout the remaining part of this section,
we fix a local solution
$(\bar y, \bar u)$ of \eqref{eq:P:unreg}.

\subsection{Regularized problems}
\label{subsec:reg_aux}
In order to derive an optimality condition for problem \eqref{eq:P:unreg},
we consider a regularization of the state constraint by penalization of any violation of the constraints, see \cite{itokun03}.
Clearly,
other regularization approaches would be viable as well,
i.e., a regularization of the obstacle problem.
For a regularization parameter $\gamma > 0$,
define the regularized problem
\begin{equation}
	\label{eq:P:reg}
	\tag{$\mathbf{P}_\gamma$}
	\begin{aligned}
		\text{Minimize}&\quad J(y,u) + \frac\gamma2 \norm{\max\{0, y - y_b\}}_{L^2(\Omega)}^2 + \frac12 \norm{ u - \bar u}^2_{L^2(\Omega)}\\
		\text{with respect to}&\quad (y,u)\in H_0^1(\Omega)\times L^2(\Omega)\\
		\text{such that}& \quad y\in K ,\; \dual{\AA y-u}{v-y} \ge 0\quad\forall v\in K,\\
		\text{and}&\quad u \in \Uad.
	\end{aligned}
\end{equation}

Note that the term $\frac12 \norm{ u - \bar u}^2_{L^2(\Omega)}$ in the regularized objective functional 
is necessary to prove convergence if $(\bar y, \bar u)$ is not a strict local minimizer.

We proceed by proving that the minimizer $(\bar y, \bar u)$ can be approximated by local solutions of the regularized problem \eqref{eq:P:reg}.

\begin{lemma}
	\label{lem:existence_local_solution}
	There exists a sequence $\seq{\gamma_k}_{k \in \N}$
	with $\gamma_k \to \infty$,
	such that
	there exists a
	local solution $(y_k, u_k)$ of \eqref{eq:P:reg} with $\gamma = \gamma_k$
	for each $k \in \N$
	and $u_k \to \bar u$ in $L^2(\Omega)$.
	Thus, $y_k \to \bar y$ in $H_0^1(\Omega)$
	and
	$\xi_k := u_k - \AA y_k \to \bar u - \AA \bar y =: \bar\xi$
	in $H^{-1}(\Omega)$.
\end{lemma}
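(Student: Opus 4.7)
The plan is a localization argument combined with the direct method of the calculus of variations. Since $(\bar y,\bar u)$ is a local minimizer of \eqref{eq:P:unreg}, there exists $r>0$ such that $(\bar y,\bar u)$ is globally optimal for \eqref{eq:P:unreg} among all feasible pairs with $\norm{u-\bar u}_{L^2(\Omega)}\le r$. Fix any sequence $\gamma_k\to\infty$ and consider the auxiliary problem obtained from \eqref{eq:P:reg} with $\gamma=\gamma_k$ by adding the extra constraint $\norm{u-\bar u}_{L^2(\Omega)}\le r$. Existence of a global minimizer $(y_k,u_k)$ of this auxiliary problem follows by the direct method: a minimizing sequence of controls is bounded in $L^2(\Omega)$, its weak limit $u$ inherits strong convergence of the associated states via the continuity of $S\colon L^2(\Omega)\to C_0^{0,\alpha}(\Omega)$ from \cref{thm:solution_VI}, and the integrand is weakly lower semi-continuous by \cref{asm:big_list}~\ref{item:asm_objective}.

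The main obstacle is showing $u_k\to\bar u$ strongly in $L^2(\Omega)$. Because the reference pair $(\bar y,\bar u)$ is admissible for the auxiliary problem and both penalty terms vanish on it,
\begin{equation*}
J(y_k,u_k) + \frac{\gamma_k}{2}\norm{\max\set{0,y_k-y_b}}_{L^2(\Omega)}^2 + \frac12\norm{u_k-\bar u}_{L^2(\Omega)}^2 \le J(\bar y,\bar u).
\end{equation*}
Since $J$ is bounded below and $\seq{u_k}$ lies in a bounded ball, the penalty term forces $\norm{\max\set{0,y_k-y_b}}_{L^2(\Omega)}\to 0$. Extract a subsequence with $u_k\weakly u^\star$; continuity of $S\colon L^2(\Omega)\to C_0^{0,\alpha}(\Omega)$ yields $y_k=S(u_k)\to y^\star:=S(u^\star)$ uniformly, hence $y^\star\le y_b$ and $(y^\star,u^\star)$ is feasible for \eqref{eq:P:unreg} with $\norm{u^\star-\bar u}_{L^2(\Omega)}\le r$. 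Weak lower semi-continuity of $J$ combined with the local optimality of $(\bar y,\bar u)$ gives
\begin{equation*}
J(y^\star,u^\star)+\tfrac12\liminf_{k\to\infty}\norm{u_k-\bar u}_{L^2(\Omega)}^2
\le \liminf_{k\to\infty}\parens[\big]{J(y_k,u_k)+\tfrac12\norm{u_k-\bar u}_{L^2(\Omega)}^2}
\le J(\bar y,\bar u)\le J(y^\star,u^\star),
\end{equation*}
so $\norm{u_k-\bar u}_{L^2(\Omega)}\to 0$ and $u^\star=\bar u$. A standard subsequence-subsequence argument yields convergence of the full sequence.

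Once strong convergence is established, for $k$ sufficiently large one has $\norm{u_k-\bar u}_{L^2(\Omega)}<r$, so the auxiliary ball constraint is inactive and $(y_k,u_k)$ is in fact a local minimizer of \eqref{eq:P:reg} itself. Finally, the Lipschitz continuity of $S\colon H^{-1}(\Omega)\to H_0^1(\Omega)$ yields $y_k\to\bar y$ in $H_0^1(\Omega)$, and $\xi_k=u_k-\AA y_k\to\bar u-\AA\bar y=\bar\xi$ in $H^{-1}(\Omega)$ follows immediately from the boundedness of $\AA\colon H_0^1(\Omega)\to H^{-1}(\Omega)$.
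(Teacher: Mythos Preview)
Your argument is essentially the paper's: localize by intersecting $\Uad$ with a closed ball around $\bar u$, take global minimizers $(y_k,u_k)$ of the resulting auxiliary problems, compare with the feasible pair $(\bar y,\bar u)$, and conclude that the ball constraint becomes inactive.

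Two small imprecisions are worth fixing. First, after extracting $u_k\weakly u^\star$ you invoke the \emph{strong} continuity $S\colon L^2(\Omega)\to C_0^{0,\alpha}(\Omega)$ from \cref{thm:solution_VI}; that statement does not apply to merely weakly convergent controls. What you actually need (and what the paper uses) is the compact embedding $L^2(\Omega)\hookrightarrow H^{-1}(\Omega)$ together with the Lipschitz continuity of $S\colon H^{-1}(\Omega)\to H_0^1(\Omega)$, which gives $y_k\to S(u^\star)$ in $H_0^1(\Omega)$. Second, your displayed chain only yields $\liminf_k\norm{u_k-\bar u}_{L^2(\Omega)}^2\le 0$, which does not by itself give $\norm{u_k-\bar u}_{L^2(\Omega)}\to 0$ along the extracted subsequence. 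The paper avoids this by writing $\tfrac12\norm{u_k-\bar u}^2\le J(\bar y,\bar u)-J(y_k,u_k)$, taking the $\limsup$, and using lower semicontinuity of $J$ to bound the right-hand side by $J(\bar y,\bar u)-J(y^\star,u^\star)\le 0$; this gives $\limsup_k\norm{u_k-\bar u}^2=0$ directly, and no subsequence-subsequence step is needed.
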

\begin{proof}
	We use a meanwhile classical localization argument.
	Let $\delta > 0$ denote the radius of optimality of $\bar u$.
	We introduce the auxiliary problems
	\begin{equation}
		\label{eq:P:reg_loc}
		\tag{$\mathbf{P}_{\gamma,\delta}$}
		\begin{aligned}
			\text{Minimize}&\quad J(y,u) + \frac\gamma2 \norm{\max\{0, y - y_b\}}_{L^2(\Omega)}^2 + \frac12 \norm{u - \bar u}^2_{L^2(\Omega)}\\
			\text{with respect to}&\quad (y,u)\in H_0^1(\Omega)\times L^2(\Omega)\\
			\text{such that}& \quad y\in K, \; \dual{\AA y-u}{v-y} \ge 0\quad\forall v\in K \\
			\text{and}& \quad u \in \Uad \cap B_\delta(\bar u)
			.
		\end{aligned}
	\end{equation}
	By the usual arguments, these problems possess global solutions.
	Let $\seq{\gamma_k}_{k \in \N}$ be an arbitrary sequence of positive numbers with $\gamma_k \to \infty$.
	We denote by $(y_k, u_k)$ a global solution of problem \eqref{eq:P:reg_loc} with $\gamma = \gamma_k$.

	Since the sequence $\seq{u_k}_{k \in \N}$ is bounded in $L^2(\Omega)$,
	we can extract a weakly convergent subsequence (denoted by the same symbol).
	We denote by $\tilde u$ the weak limit and
	by compact embedding, we have
	$y_k \to \tilde y := S(\tilde u)$ in $H_0^1(\Omega)$.
	Since $(\bar y, \bar u)$ is feasible for 
	\eqref{eq:P:reg_loc},
	we find
	\begin{equation}
		\label{eq:estimate_objectives}
		J(\bar y, \bar u) \ge J(y_k, u_k) + \frac{\gamma_k}2 \norm{\max\{0, y_k - y_b\}}_{L^2(\Omega)}^2
		+ \frac12 \norm{ u_k - \bar u}^2_{L^2(\Omega)}.
	\end{equation}
	From this inequality we infer that $\tilde y \le y_b$.
	Hence, $(\tilde y, \tilde u)$ is a feasible point for \eqref{eq:P:unreg}
	and by lower semicontinuity of $J$, we find
	\begin{equation*}
		J(\bar y, \bar u)
		\ge
		J(\tilde y, \tilde u) + \frac12 \limsup_{k \to \infty}\norm{ u_k - \bar u}^2_{L^2(\Omega)}
		\ge
		J(\tilde y, \tilde u) + \frac12 \norm{ \tilde u - \bar u}^2_{L^2(\Omega)}
		.
	\end{equation*}
	Since $\tilde u \in B_\delta(\bar u)$, we obtain
	\begin{equation*}
		(\tilde y, \tilde u) = (\bar y, \bar u)
		\quad\text{and}\quad
		\norm{u_k - \bar u}_{L^2(\Omega)} \to 0
		,
	\end{equation*}
	i.e.,
	$u_k \to \bar u$ in $L^2(\Omega)$.
	Hence, the constraint $u_k \in B_\delta(\bar u)$
	is not active for large $k$ and the result follows.
\end{proof}

The regularized problem 
\eqref{eq:P:reg}
is a
standard optimal control problem of the obstacle problem
with control constraints
and a differentiable objective function.
Thus, we obtain a primal optimality condition similar to \cref{lem:B_stat_first},
i.e.,
\begin{equation}
 \label{eq:B_stat_reg}
 0
 \le
 \dual{J_y(y_k, u_k)}{S'(u_k, h)}
 +
 \gamma \, \innerprod{\max\set{0,y_k - y_b}}{S'(u_k; h)}
 +
 \innerprod{J_u(y_k, u_k)}{h}
 +
 \innerprod{u_k - \bar u}{h}
\end{equation}
folds for all $h \in \TT_{\Uad}(u_k)$.
On the other hand,
local solutions satisfy a system of C-stationarity,
see \cite[Theorem~1.1]{Wachsmuth2014:2}
and (under higher regularity assumptions on the data) \cite[Propositions~3.5--3.8]{SchielaWachsmuth2013}.
This yields the following result.

\begin{lemma}
	\label{lem:optimality_condition_regularized}
	Let 
	$(y_k, u_k)$ be locally optimal for \eqref{eq:P:reg}.
	Then, there exist
	$\mu_k \in H^{-1}(\Omega)$, $\lambda_k \in L^2(\Omega)$ and $p_k \in H_0^1(\Omega)$
	such that the system
	\begin{subequations}
		\label{eq:strong_stationarity_low}
		\begin{align}
			\label{eq:strong_stationarity_low_1}
			\AA^\star p_k + J_y(y_k,u_k) + \gamma_k \max\{0, y_k - y_b\} + \mu_k &= 0 \quad\text{in } H^{-1}(\Omega), \\
			\label{eq:strong_stationarity_low_2}
			J_u(y_k,u_k) + (u_k - \bar u) + \lambda_k - p_k &= 0 \quad\text{in } L^2(\Omega), \\
			\label{eq:strong_stationarity_low_4}
			p_k & = 0 \quad\text{q.e.\ on } \Omega_{s,k}, \\
			\label{eq:strong_stationarity_low_5}
			\dual{\mu_k}{v}_{H_0^1(\Omega)}
			&= 0 \quad\forall v \in H_0^1(\Omega), v = 0 \text{ q.e.\ on } \Omega_{a,k},
			\\
			\label{eq:strong_stationarity_low_6}
			\dual{\mu_k}{\Phi p_k}_{H_0^1(\Omega)} &\ge 0
			\quad
			\forall \Phi \in W^{1,\infty}(\Omega)^+,
			\\
			\label{eq:strong_stationarity_low_7}
			\lambda_k &\in \NN_{\Uad}(u_k)
		\end{align}
	\end{subequations}
	is satisfied.
	Here,
	\begin{align*}
		\Omega_{a,k} &:= \set{y_k = y_a},
		&
		\Omega_{s,k} &:= \qsupp{\xi_k}
	\end{align*}
	are the active and strictly active set
	for the obstacle problem at $(y_k, u_k)$, respectively,
	and $\xi_k := u_k - \AA y_k$ is the corresponding multiplier.
	Note that both sets are defined up to sets of capacity zero.
\end{lemma}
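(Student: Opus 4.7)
The plan is to invoke \cite[Theorem~1.1]{Wachsmuth2014:2} directly: the regularized problem \eqref{eq:P:reg} fits precisely into the framework of control-constrained optimal control of the obstacle problem with a continuously Fréchet-differentiable objective, so all that remains is to identify the pieces of the resulting system with those in \eqref{eq:strong_stationarity_low}.

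First I would write the reduced objective of \eqref{eq:P:reg} as
\[
\tilde J_k(y,u) := J(y,u) + \frac{\gamma_k}{2}\,\norm{\max\set{0,y-y_b}}_{L^2(\Omega)}^2 + \frac12\,\norm{u-\bar u}_{L^2(\Omega)}^2
\]
and verify that $\tilde J_k \in C^1(H_0^1(\Omega)\times L^2(\Omega);\R)$. The only nontrivial piece is the penalty, whose $y$-derivative is $\gamma_k\,\max\set{0,y-y_b} \in L^2(\Omega) \embeds H^{-1}(\Omega)$ by chain rule together with the $C^1$-property of $y\mapsto \tfrac12\norm{\max\set{0,y-y_b}}_{L^2}^2$ (this is the squared distance to the closed convex set $\set{y\le y_b}$ in $L^2$). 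Thus
\[
\tilde J_{k,y}(y,u) = J_y(y,u) + \gamma_k \max\set{0,y-y_b},
\qquad
\tilde J_{k,u}(y,u) = J_u(y,u) + (u-\bar u).
\]

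Next I would check that the standing hypotheses of \cite[Theorem~1.1]{Wachsmuth2014:2} hold under \cref{asm:big_list}: the differential operator $\AA$ is coercive on $H_0^1(\Omega)$, the obstacle satisfies $y_a \in H^1(\Omega)$ with $\AA y_a \in L^2(\Omega)$, and $\Uad \subset L^2(\Omega)$ is convex and closed. The theorem then applies to the local minimizer $(y_k,u_k)$ of \eqref{eq:P:reg} and yields multipliers $p_k \in H_0^1(\Omega)$, $\mu_k \in H^{-1}(\Omega)$, $\lambda_k \in L^2(\Omega)$ satisfying the adjoint equation $\AA\adjoint p_k + \tilde J_{k,y}(y_k,u_k) + \mu_k = 0$ in $H^{-1}(\Omega)$, the gradient equation $\tilde J_{k,u}(y_k,u_k) + \lambda_k - p_k = 0$ in $L^2(\Omega)$, the normal-cone inclusion $\lambda_k \in \NN_{\Uad}(u_k)$, the strictly-active-set condition $p_k = 0$ q.e.\ on $\qsupp(\xi_k)$, the active-set complementarity $\dual{\mu_k}{v} = 0$ for all $v \in H_0^1(\Omega)$ vanishing q.e.\ on $\set{y_k = y_a}$, and finally the C-stationarity sign condition $\dual{\mu_k}{\Phi p_k} \ge 0$ for all nonnegative $\Phi \in W^{1,\infty}(\Omega)$.

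Substituting the explicit formulas for $\tilde J_{k,y}$ and $\tilde J_{k,u}$ and using the notation $\Omega_{a,k} := \set{y_k = y_a}$, $\Omega_{s,k} := \qsupp(\xi_k)$ gives precisely the system \eqref{eq:strong_stationarity_low_1}--\eqref{eq:strong_stationarity_low_7}. There is no real obstacle in this argument; the work is entirely cited. The only care needed is to treat $\gamma_k\max\set{0,y_k-y_b}$ as an $L^2(\Omega)$-function which we then view in $H^{-1}(\Omega)$ via the canonical embedding when writing the adjoint equation in \eqref{eq:strong_stationarity_low_1}.
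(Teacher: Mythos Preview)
Your proposal is correct and follows exactly the approach indicated in the paper: the paper does not give a standalone proof of this lemma but simply points out, in the paragraph preceding it, that \eqref{eq:P:reg} is a standard control-constrained obstacle problem with a differentiable objective and then cites \cite[Theorem~1.1]{Wachsmuth2014:2} (and \cite[Propositions~3.5--3.8]{SchielaWachsmuth2013}). Your write-up merely spells out the identification of $\tilde J_{k,y}$ and $\tilde J_{k,u}$ and the verification of the hypotheses, which is precisely what the paper leaves implicit.
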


\subsection{Boundedness of the multipliers}
\label{subsec:boundedness_mult}

From now on,
we will not only fix $(\bar y,\bar u)$ (with associated multiplier $\bar\xi$), but also
sequences
$\seq{\gamma_k}_{k \in \N}$
and
$\seq{(y_k, u_k)}_{k \in \N}$
as in \cref{lem:existence_local_solution}
and the corresponding sequences of multipliers
from \cref{lem:optimality_condition_regularized}.
Recall that \cref{lem:existence_local_solution}
already implies the convergence results for the primal quantities
$u_k,y_k,$ and $\xi_k$.
We check that this implies bounds on the dual variables,
in order to pass to the limit in the optimality system \eqref{eq:strong_stationarity_low} in \cref{lem:optimality_condition_regularized}.

For brevity, we introduce
the regularized counterpart to the multiplier $\nu$ for the state
constraint via
\begin{equation}
	\label{eq:def_nu_k}
	\nu_k
	:= \gamma_k \max\{0, y_k - y_b\}
	,
\end{equation}
as well as the set
on which the state constraint is violated or active,
i.e.,
\begin{equation*}
	\Omega_{b,k} := \set{y_k \ge y_b}.
\end{equation*}
Note that $\nu_k$ is an approximation of a Lagrange multiplier
for the pointwise state constraint $\bar y\le y_b$
in the unregularized problem \eqref{eq:P:unreg} with support contained in $\Omega_{b,k}$.
Our first goal is to bound $\nu_k$, $\mu_k,$ and $p_k$ in appropriate spaces.
To this end, we observe that the supports of $\nu_k$ and $\mu_k$ are uniformly separated.
\begin{lemma}
	\label{lem:separation_support}
	There exists a constant $\rho > 0$
	such that
	\begin{equation*}
		\dist(\Omega_{a,k}, \Omega_{b,k}) \ge \rho
	\end{equation*}
	holds for all $k$.
\end{lemma}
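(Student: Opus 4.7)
The plan is to argue by contradiction after upgrading $y_k \to \bar y$ from the $H_0^1$-convergence supplied by \cref{lem:existence_local_solution} to uniform convergence on $\bar\Omega$. Once uniform convergence is in hand, the defining (in-)equalities of $\Omega_{a,k}$ and $\Omega_{b,k}$ can be passed to the limit pointwise, and the contradiction will come from the Slater separation $y_a \le y_b - \tau$ on $\Omega$ noted right after \cref{asm:big_list}.

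Concretely, the first step would be to combine $u_k \to \bar u$ in $L^2(\Omega)$ with the continuity of $S \colon L^2(\Omega) \to C_0^{0,\alpha}(\Omega)$ from \cref{thm:solution_VI} to obtain $y_k \to \bar y$ in $C_0^{0,\alpha}(\Omega)$, hence in particular uniformly on $\bar\Omega$. (Alternatively, one could use the Lipschitz estimate of \cref{lem:lipschitz_with_w1p} after embedding $L^2(\Omega) \embeds W^{-1,q'}(\Omega)$ for a suitable $q' > n$.) In particular, each $y_k$ admits a continuous representative, and $\Omega_{a,k} = \set{y_k = y_a}$ together with $\Omega_{b,k} = \set{y_k \ge y_b}$ are genuine closed subsets of the compact set $\bar\Omega$.

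Next, assuming the claim fails, I would extract sequences $x_{a,k} \in \Omega_{a,k}$ and $x_{b,k} \in \Omega_{b,k}$ with $\abs{x_{a,k} - x_{b,k}} \to 0$, and then, by compactness of $\bar\Omega$, pass to a subsequence so that both converge to a common point $\bar x \in \bar\Omega$. Passing to the limit in $y_k(x_{a,k}) = y_a(x_{a,k})$ using uniform convergence of $y_k$ and continuity of $y_a$ yields $\bar y(\bar x) = y_a(\bar x)$, and similarly $y_k(x_{b,k}) \ge y_b(x_{b,k})$ gives $\bar y(\bar x) \ge y_b(\bar x)$. Combining these two yields $y_a(\bar x) \ge y_b(\bar x)$, which contradicts $y_a \le y_b - \tau < y_b$ pointwise on $\Omega$.

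The main obstacle is really just the upgrade from $H_0^1$-convergence to $C(\bar\Omega)$-convergence, since $H_0^1$-convergence alone does not allow pointwise arguments at specific points $x_{a,k}, x_{b,k}$. Once this is settled via \cref{thm:solution_VI} (or \cref{lem:lipschitz_with_w1p}), the remainder is a routine compactness/contradiction argument that exploits the strict separation $y_a \le y_b - \tau$ inherited from the Slater condition; no further delicate estimates involving the multipliers $\mu_k$, $\nu_k$, or $p_k$ are needed at this point.
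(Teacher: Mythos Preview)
Your argument is correct and follows essentially the same route as the paper: upgrade the $H_0^1$-convergence of the states to a uniform/H\"older bound via the mapping properties of $S$ from \cref{thm:solution_VI}, and then exploit the strict gap $y_a \le y_b - \tau$. The paper phrases this as a direct estimate from a uniform H\"older bound on $\seq{y_k}$, whereas you package it as a compactness/contradiction argument, but the substance is identical.
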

\begin{proof}
	By \cref{asm:big_list}~\ref{item:asm_domain} the controls $u_k$ are bounded in $L^2(\Omega)$, and the associated states $y_k$
	are bounded in $H^2(\Omega)$ due to the mapping properties of $S$.
	Hence, their Hölder-norm is uniformly bounded
	and the result follows from $y_a \le y_b - \tau$.
\end{proof}
As a consequence, we obtain the following auxiliary result:
\begin{lemma}
 \label{lem:sets}
 There exist open sets $\hat \Omega_a\supset \Omega_a$ and $\hat \Omega_b\supset \Omega_b$
 such that $\Omega_{a,k}\subset \hat \Omega_a$, $\Omega_{b,k}\subset \hat \Omega_b$
 for all $k$ sufficiently large as well as $\rho>0$ 
 such that
 \begin{equation*}
  \dist(\hat \Omega_{a}, \hat \Omega_{b}) > \rho.
 \end{equation*}
\end{lemma}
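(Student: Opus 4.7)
The plan is to thicken the closed limit sets $\Omega_a := \set{\bar y = y_a}$ and $\Omega_b := \set{\bar y = y_b}$ into fixed open neighborhoods that have positive mutual distance and eventually absorb $\Omega_{a,k}$ and $\Omega_{b,k}$. The starting point is an upgrade of the convergence from \cref{lem:existence_local_solution} to uniform convergence on $\bar\Omega$: since $\seq{u_k}_{k \in \N}$ is bounded in $L^2(\Omega)$, \cref{thm:solution_VI} yields a uniform bound of $\seq{y_k}_{k \in \N}$ in $C_0^{0,\alpha}(\Omega)$ for some $\alpha > 0$, and combining this with the compact embedding $C_0^{0,\alpha}(\Omega) \embeds C_0(\Omega)$ and $y_k \to \bar y$ in $H_0^1(\Omega)$, a subsequence--subsequence argument (using uniqueness of the limit $\bar y$) yields $y_k \to \bar y$ uniformly on $\bar\Omega$.

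Next, I would introduce the closed auxiliary sets
\begin{equation*}
	A := \set{x \in \bar\Omega \given \bar y(x) \le y_a(x) + \tau/4},
	\qquad
	B := \set{x \in \bar\Omega \given \bar y(x) \ge y_b(x) - \tau/4}.
\end{equation*}
The Slater gap $y_b - y_a \ge \tau$ on $\Omega$ (noted immediately after \cref{asm:big_list} and extending to $\bar\Omega$ by continuity) together with the feasibility chain $y_a \le \bar y \le y_b$ implies that any common point of $A$ and $B$ would force $y_b - y_a \le \tau/2$, a contradiction. Hence $A$ and $B$ are disjoint closed subsets of the compact set $\bar\Omega$, so $d := \dist(A,B) > 0$.

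Finally, set
\begin{equation*}
	\hat\Omega_a := \set{x \in \Omega \given \dist(x,A) < d/3},
	\qquad
	\hat\Omega_b := \set{x \in \Omega \given \dist(x,B) < d/3},
	\qquad
	\rho := d/4.
\end{equation*}
These are open subsets of $\Omega$ with $\hat\Omega_a \supset A \cap \Omega \supset \Omega_a$ and $\hat\Omega_b \supset B \cap \Omega \supset \Omega_b$; the triangle inequality gives $\dist(\hat\Omega_a, \hat\Omega_b) \ge d/3 > \rho$. For the remaining inclusions, pick $k$ large enough that $\norm{y_k - \bar y}_{C(\bar\Omega)} < \tau/4$: if $x \in \Omega_{a,k}$, then $y_k(x) = y_a(x)$ forces $\bar y(x) \le y_a(x) + \tau/4$, so $x \in A \cap \Omega \subset \hat\Omega_a$; the inclusion $\Omega_{b,k} \subset \hat\Omega_b$ follows symmetrically from $y_k(x) \ge y_b(x)$. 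The only nonroutine step is the uniform convergence upgrade; the remainder is elementary point-set topology built from the Slater gap.
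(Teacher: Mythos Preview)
Your proof is correct and follows essentially the same strategy as the paper: define enlarged sets using the $\tau/4$ margin afforded by the Slater gap $y_b - y_a \ge \tau$, use uniform convergence $y_k \to \bar y$ to absorb $\Omega_{a,k}$ and $\Omega_{b,k}$, and then argue positive distance. The only cosmetic difference is that the paper defines $\hat\Omega_a := \{\bar y < y_a + \tau/4\}$ and $\hat\Omega_b := \{\bar y > y_b - \tau/4\}$ directly as open sets and invokes H\"older continuity of $\bar y$ for the distance bound, whereas you pass through the closed versions $A,B$ and use compactness of $\bar\Omega$ before thickening by $d/3$; your route is arguably a bit cleaner, but the content is the same.
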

\begin{proof}
 We define
 \begin{align*}
  \hat \Omega_a &:=\set[\Big]{\bar y < y_a + \frac{\tau}{4}},
  &
  \hat \Omega_b &:=\set[\Big]{\bar y > y_b - \frac{\tau}{4}}.
 \end{align*}
 For $x\in \Omega_{a,k}$, we observe that
 $\bar y(x)=\bar y(x)-y_k(x)+y_k(x)\le \norm{\bar y-y_k}_{L^\infty(\Omega)}+y_a(x)$.
 For $k$ large enough, uniform convergence of $y_k$ towards $\bar y$ yields $x\in \hat\Omega_{a}$.
 The set $\hat\Omega_b$ can be treated analogously.
 From $y_b-\frac{\tau}{4}-y_a-\frac{\tau}{4}< \tau-\frac{\tau}{2}= \frac{\tau}{2}$
 and the Hölder continuity of $\bar y$ the result follows.
\end{proof}
The boundedness
of the multiplier approximations $\nu_k$
is a simple consequence of the Slater point property.
\begin{lemma}
	\label{lem:boundedness_nu}
	There exists $C > 0$ such that $\norm{\nu_k}_{L^1(\Omega)} \le C$.
\end{lemma}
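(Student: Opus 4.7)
My plan is to exploit the Slater point $\hat u$ from \cref{asm:big_list}\ref{item:asm_slater} by testing the regularized primal optimality condition \eqref{eq:B_stat_reg} with the direction $h := \hat u - u_k$. Since $\hat u \in \Uad$ and $\Uad$ is convex, we have $\hat u - u_k \in \RR_{\Uad}(u_k) \subset \TT_{\Uad}(u_k)$, so this choice is admissible in \eqref{eq:B_stat_reg}.

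The key pointwise estimate comes from \cref{cor:conv_direc_deriv}. Applied at $u = u_k$ with direction $\hat u - u_k$, it yields
\[
y_k + S'(u_k; \hat u - u_k) \le S(\hat u) \le y_b - \tau \quad \text{a.e.\ in } \Omega.
\]
Since $\nu_k = \gamma_k \max\{0, y_k - y_b\}$ is supported on $\set{y_k \ge y_b}$, on that set we have $y_b - y_k \le 0$, and hence $S'(u_k; \hat u - u_k) \le -\tau$ there. Pairing this with $\nu_k \ge 0$ produces the crucial inequality
\[
\innerprod{\nu_k}{S'(u_k; \hat u - u_k)} \le -\tau \, \norm{\nu_k}_{L^1(\Omega)}.
\]

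Substituting this bound into \eqref{eq:B_stat_reg} and rearranging gives
\[
\tau \, \norm{\nu_k}_{L^1(\Omega)} \le \dual{J_y(y_k,u_k)}{S'(u_k; \hat u - u_k)} + \innerprod{J_u(y_k,u_k) + u_k - \bar u}{\hat u - u_k}.
\]
It then remains to bound the right-hand side uniformly in $k$: continuity of $J_y$ and $J_u$ combined with the convergence $(y_k, u_k) \to (\bar y, \bar u)$ from \cref{lem:existence_local_solution} bounds $J_y(y_k, u_k)$ in $H^{-1}(\Omega)$ and $J_u(y_k, u_k)$ in $L^2(\Omega)$; the sequences $\hat u - u_k$ and $u_k - \bar u$ are bounded in $L^2(\Omega) \embeds H^{-1}(\Omega)$; and the directional derivatives $S'(u_k; \hat u - u_k)$ are bounded in $H_0^1(\Omega)$ via the variational characterization \eqref{eq:direc_diff} together with the Lipschitz estimate \eqref{eq:h1bound}.

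The only real obstacle is matching the Slater gap $\tau$ to a uniform pointwise sign condition on $S'(u_k; \hat u - u_k)$ over $\supp(\nu_k)$; once \cref{cor:conv_direc_deriv} is invoked to give the a.e.\ upper bound $S(\hat u) - y_k$ on this directional derivative, the remainder of the argument is standard boundedness bookkeeping.
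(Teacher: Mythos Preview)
Your proof is correct and follows essentially the same route as the paper: test \eqref{eq:B_stat_reg} with $h=\hat u-u_k$, use \cref{cor:conv_direc_deriv} together with the Slater condition to get $S'(u_k;\hat u-u_k)\le y_b-\tau-y_k$ and hence $\innerprod{\nu_k}{S'(u_k;\hat u-u_k)}\le -\tau\norm{\nu_k}_{L^1(\Omega)}$, then bound the remaining terms via the convergence from \cref{lem:existence_local_solution}. The paper phrases the key inequality as $\innerprod{\nu_k}{y_b-y_k-\tau}\ge -C$ and then uses $\innerprod{\nu_k}{y_b-y_k}\le 0$, but this is just a minor rearrangement of the same estimate.
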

\begin{proof}
We start with the B-stationarity \eqref{eq:B_stat_reg} with $h = \hat u - u_k$, i.e.,
\begin{align*}
 0&\le
 \dual{J_y(y_k,u_k)}{S'(u_k; \hat u-u_k)}
 +
 \gamma_k \, \innerprod{\max\{0,y_k-y_b\}}{S'(u_k; \hat u-u_k)}
 \\&\qquad+
 \innerprod{J_u(y_k,u_k)}{\hat u-u_k}
 +
 \innerprod{u_k - \bar u}{\hat u - u_k}
 .
\end{align*}
Due to the convergence properties of $y_k$ and $u_k$,
the first, third and fourth addend can be bounded by a constant.
Thus,
\begin{equation*}
 \gamma_k \, \innerprod{\max\{0,y_k-y_b\}}{S'(u_k; \hat u-u_k)}
 \ge
 -C.
\end{equation*}
Due to the convexity of the solution operator $S$, \cref{cor:conv_direc_deriv} can be used
to obtain a linearized Slater condition for the local solutions $u_k$ of \eqref{eq:P:reg}
from the Slater point $\hat u$. Indeed, for all $k>0$ we have
\[
 y_k + S'(u_k; \hat u-u_k)\le S(\hat u)\le y_b-\tau.
\]
Combining the last two inequalities yields
\begin{equation*}
 \innerprod{\nu_k}{y_b - y_k - \tau}
 =
 \gamma_k \, \innerprod{\max\{0,y_k-y_b\}}{y_b - y_k - \tau}
 \ge
 -C.
\end{equation*}
Since $\innerprod{\nu_k}{y_b - y_k} \le 0$
by definition of $\nu_k$,
we obtain
$\norm{\nu_k}_{L^1(\Omega)} \le C \tau^{-1}$.
\end{proof}

Next, we show the boundedness of the adjoint state $p$ and of the multiplier $\mu$.
\begin{lemma}
 \label{lem:bound_p_mu}
 For every $q \in \parens[\big]{1,n/(n-1)}$, there exists $C > 0$ such that
 \begin{equation*}
  \norm{p_k}_{W_0^{1,q}(\Omega)}
  +
  \norm{p_k}_{H^{1}(\hat\Omega_a)}
  +
  \norm{\mu_k}_{H^{-1}(\Omega)}
  \le
  C,
 \end{equation*}
 where $\hat\Omega_a$ is defined in \cref{lem:sets}.
\end{lemma}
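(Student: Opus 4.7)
The plan is to derive the three bounds by a coupled argument that exploits the separation of the supports of $\mu_k$ (concentrated on $\Omega_{a,k}\subset\hat\Omega_a$) and $\nu_k$ (concentrated on $\Omega_{b,k}\subset\hat\Omega_b$) established in \cref{lem:sets}. The $L^1$-bound on $\nu_k$ (hence its uniform boundedness in $\MM(\Omega)$, since $\nu_k\ge 0$) has already been obtained in \cref{lem:boundedness_nu}. Using that $\dist(\hat\Omega_a,\hat\Omega_b)\ge\rho>0$, I fix a cut-off function $\Phi\in W^{1,\infty}(\Omega)$ with $0\le\Phi\le 1$, $\Phi=1$ on an open neighborhood $V$ of $\hat\Omega_a$, and $\Phi=0$ on $\hat\Omega_b$. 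Two features of this cut-off will be used repeatedly: first, $(1-\Phi)v$ vanishes on $\hat\Omega_a\supset\Omega_{a,k}$ so that by \eqref{eq:strong_stationarity_low_5} one has $\dual{\mu_k}{v}=\dual{\mu_k}{\Phi v}$ for every $v\in H_0^1(\Omega)$; second, $\Phi w$ vanishes on $\hat\Omega_b\supset\Omega_{b,k}\supset\supp(\nu_k)$ so that $\innerprod{\nu_k}{\Phi w}=0$ for every $w$.

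Three estimates now have to be chained. (a) \emph{Caccioppoli estimate for $p_k$.} Testing \eqref{eq:strong_stationarity_low_1} with $\Phi^2 p_k\in H_0^1(\Omega)$, using the vanishing of the $\nu_k$-term and \eqref{eq:strong_stationarity_low_6} applied to $\Phi^2\in W^{1,\infty}(\Omega)^+$ to obtain $\dual{\mu_k}{\Phi^2 p_k}\ge 0$, rewriting $\dual{\AA^\star p_k}{\Phi^2 p_k}=\dual{\AA(\Phi^2 p_k)}{p_k}$, exploiting the ellipticity \eqref{eq:elliptic} on the principal term, and absorbing the cross-terms that involve $\nabla(\Phi^2)=2\Phi\nabla\Phi$ via Young's inequality yields
\begin{equation*}
    \norm{p_k}_{H^1(V)}^2\le C\bigl(\norm{p_k}_{L^2(\Omega)}^2+1\bigr).
\end{equation*}
(b) \emph{Bound on $\mu_k$.} For $v\in H_0^1(\Omega)$ one has $\dual{\mu_k}{v}=\dual{\mu_k}{\Phi v}=-\dual{\AA(\Phi v)}{p_k}-\dual{J_y}{\Phi v}$, where the $\nu_k$-term again vanishes. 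Estimating the bilinear form of $\AA$ as in \cref{lem:smooth_multiplier}, and using that every integrand in $\dual{\AA(\Phi v)}{p_k}$ is supported on $V=\supp(\Phi)$, produces
\begin{equation*}
    \norm{\mu_k}_{H^{-1}(\Omega)}\le C\bigl(\norm{p_k}_{H^1(V)}+1\bigr).
\end{equation*}
(c) \emph{Bound on $p_k$ in $W_0^{1,q}$.} Splitting $\AA^\star p_k=-(J_y+\mu_k)-\nu_k$ into an $H^{-1}$-part and an $\MM$-part, the very weak formulation \eqref{eq:adjveryweak2} together with \cref{thm:existenceadjoint} gives
\begin{equation*}
    \norm{p_k}_{W_0^{1,q}(\Omega)}\le C\bigl(\norm{J_y}_{H^{-1}}+\norm{\mu_k}_{H^{-1}}+\norm{\nu_k}_{\MM}\bigr).
\end{equation*}

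Closing the loop is the main obstacle. Chaining (a)--(c) with the Sobolev embedding $W_0^{1,q}(\Omega)\embeds L^2(\Omega)$ (valid in $n\le 3$ whenever $q$ is close enough to $n/(n-1)$) produces a self-referential system
\begin{equation*}
    R_k\le C(L_k+1),\qquad M_k\le C(R_k+1),\qquad Q_k\le C(M_k+1),\qquad L_k\le CQ_k,
\end{equation*}
with $R_k=\norm{p_k}_{H^1(V)}$, $M_k=\norm{\mu_k}_{H^{-1}}$, $Q_k=\norm{p_k}_{W_0^{1,q}}$, $L_k=\norm{p_k}_{L^2}$, in which the product of the constants need not be smaller than one. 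I intend to close it by a scaling-contradiction argument: assuming $\alpha_k:=R_k+M_k+Q_k\to\infty$, I rescale $(p_k,\mu_k,\nu_k,J_y)$ by $\alpha_k$, extract weakly convergent subsequences in $W_0^{1,q}$, $H^1(V)$, $H^{-1}$ and (via Rellich) strongly in $L^2$, pass to the limit in the rescaled adjoint equation and in the sign condition (using the compact embedding $H^1(V)\embeds L^2(V)$ for the term against the weakly-$\star$ convergent $\hat\mu_k$), and derive a contradiction with $\hat M:=\lim\hat M_k\le 1$ from the fact that the limit adjoint state must then vanish. The technical core is thus a combination of \eqref{eq:strong_stationarity_low_6} (providing the correct sign of the $\mu_k$-term in the energy estimate) and the separation lemma (permitting the cut-off manipulations in (a), (b)).
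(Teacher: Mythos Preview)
Your strategy---Caccioppoli estimate on $p_k$, cut-off bound on $\mu_k$, very-weak $W^{1,q}$-estimate, and a rescaling/contradiction closure---is a genuinely different route from the paper's. The paper avoids any contradiction by decomposing $p_k=p_k^y+p_k^\mu+p_k^\nu$ according to the three right-hand-side contributions $J_y$, $\mu_k$, $\nu_k$. The pieces $p_k^y$ and $\varphi p_k^\nu$ are bounded immediately (in $H_0^1$, respectively via \cref{lem:bound_adjoint}). For $p_k^\mu$ one writes
\[
\gamma_1\norm{p_k^\mu}_{H_0^1}^2
\le
-\dual{\mu_k}{p_k^\mu}
=
\dual{\mu_k}{p_k^\nu}+\dual{\mu_k}{p_k^y}-\dual{\mu_k}{p_k},
\]
uses \eqref{eq:strong_stationarity_low_6} with $\Phi\equiv 1$ to drop $-\dual{\mu_k}{p_k}\le 0$, replaces $\dual{\mu_k}{p_k^\nu}$ by $\dual{\mu_k}{\varphi p_k^\nu}$ via the support property, and closes with $\norm{\mu_k}_{H^{-1}}\sim\norm{p_k^\mu}_{H_0^1}$. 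This gives a linear bound on $\norm{p_k^\mu}_{H_0^1}$ directly. What the decomposition buys is precisely that coercivity is applied to $p_k^\mu$ alone, so the quadratic estimate closes against a linear right-hand side and no compactness or rescaling is needed.

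Your closing step, however, is not justified as written. To obtain $\hat p=0$ in the rescaled limit you propose to pass $\dual{\hat\mu_k}{\Phi^2\hat p_k}\ge 0$ to the limit ``using $H^1(V)\embeds L^2(V)$''. That embedding yields only strong $L^2$-convergence of $\Phi^2\hat p_k$, which is insufficient against $\hat\mu_k\rightharpoonup\hat\mu$ in $H^{-1}(\Omega)$: this is a weak--weak pairing. A correct closure is to substitute the rescaled adjoint equation, obtaining $\dual{\hat\mu_k}{\Phi^2\hat p_k}=-\dual{\AA(\Phi^2\hat p_k)}{\hat p_k}+o(1)$, rewrite $\dual{\AA(\Phi^2\hat p_k)}{\hat p_k}=\dual{\AA(\Phi\hat p_k)}{\Phi\hat p_k}+E_k$ where the commutator $E_k$ converges (products of strongly $L^2$-convergent $\hat p_k$ with weakly $L^2$-convergent $\Phi\nabla\hat p_k$), and use weak lower semicontinuity of $v\mapsto\dual{\AA v}{v}$ on $H_0^1$. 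Since the limit right-hand side $\hat\mu$ lies in $H^{-1}$, Lax--Milgram forces $\hat p\in H_0^1$, and then $\gamma_1\norm{\hat p}_{H_0^1}^2=-\dual{\hat\mu}{\hat p}=-\dual{\hat\mu}{\Phi^2\hat p}\le 0$ finishes the job. There is also a minor mismatch: your Caccioppoli controls $\norm{p_k}_{H^1(\{\Phi=1\})}$, whereas step~(b) needs $\norm{p_k}_{H^1(\supp\Phi)}$; you should use a second, wider cut-off in (a).
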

\begin{proof}
We split the adjoints $p_k$ into the sum of
$p_k^y,p_k^\mu,p_k^\nu \in H_0^1(\Omega)$,
defined via
\begin{align*}
		\AA^\star p_k^y + J_y(y_k, u_k) &= 0 \quad\text{in } H^{-1}(\Omega), \\
		\AA^\star p_k^\mu + \mu_k &= 0 \quad\text{in } H^{-1}(\Omega), \\
		\AA^\star p_k^\nu + \gamma_k \max\{0, y_k - y_b\} &= 0 \quad\text{in } H^{-1}(\Omega).
\end{align*}
Due to $(y_k, u_k) \to (\bar y, \bar u)$,
the term $J_y(y_k, u_k)$ is bounded in $H^{-1}(\Omega)$,
see \cref{asm:big_list}~\ref{item:asm_objective}.
This implies the boundedness of $p^y_k$, i.e.,
\begin{equation*}
 \norm{p^y_k}_{H_0^1(\Omega)} \le C.
\end{equation*}
Next, we are going to bound $p^\nu_k$.
To this end, let $\varphi \in C_c^\infty(\Omega)$
be given, such that $\varphi = 1$ on $\hat\Omega_a$
and $\varphi = 0$ on $\hat\Omega_b$.
Since the equation for $p^\nu_k$ can be understood in the very weak sense,
see \eqref{eq:adjveryweak2},
we can apply
\cref{thm:existenceadjoint,lem:bound_adjoint}
in combination with \cref{lem:boundedness_nu}
to obtain
\begin{equation*}
 \norm{p^\nu_k}_{W_0^{1,q}(\Omega)}
 +
 \norm{\varphi p^\nu_k}_{H_0^1(\Omega)}
 \le
 C.
\end{equation*}
To obtain a uniform bound for $p_k^\mu$, we write
\begin{equation*}
	\gamma_1 \norm{p_k^\mu}_{H_0^1(\Omega)}^2
	\le
	\dual{\AA^\star p_k^\mu}{p_k^\mu}
	=
	-\dual{\mu_k}{p_k^\mu}
	=
	\dual{\mu_k}{p_k^\nu}
	+\dual{\mu_k}{p_k^y}
	-\dual{\mu_k}{p_k}
	.
\end{equation*}
In order to bound the first term,
we use
\begin{equation*}
	\dual{\mu_k}{(1 - \varphi) v} = 0
	\qquad\forall v \in H_0^1(\Omega)
\end{equation*}
due to \eqref{eq:strong_stationarity_low_5}.
For the third term,
we can apply \eqref{eq:strong_stationarity_low_6} with $\Phi = 1$
and obtain $-\dual{\mu_k}{p_k} \le 0$.
Now,
the above inequality yields
\begin{align*}
	\gamma_1 \norm{p_k^\mu}_{H_0^1(\Omega)}^2
	&\le
	\dual{\mu_k}{\varphi p_k^\nu}
	+\dual{\mu_k}{p_k^y}
	\\
	&\le
	C \norm{\mu_k}_{H^{-1}(\Omega)}\norm{\varphi p_k^\nu}_{H_0^1(\Omega)}
	+ C \norm{\mu_k}_{H^{-1}(\Omega)} \norm{p_k^y}_{H_0^1(\Omega)}.
\end{align*}
Together with
\begin{equation*}
	C^{-1} \norm{p_k^\mu}_{H_0^1(\Omega)}
	\le
	\norm{\mu_k}_{H^{-1}(\Omega)}
	\le
	C \norm{p_k^\mu}_{H_0^1(\Omega)}
\end{equation*}
which follows from the coercivity of $\AA\adjoint$,
we obtain the claim.
\end{proof}

\subsection{Passage to the limit in the optimality system}
\label{subsec:passage_limit}

From the boundedness results in \cref{lem:boundedness_nu} and \cref{lem:bound_p_mu} we conclude that there exist weakly convergent subsequences, denoted by the same index $k$, satisfying
		\begin{align*}
	p_k&\weakly p\quad\text{in } W_0^{1,q}(\Omega), &
	\nu_k&\weaklystar \nu\quad\text{in } \MM(\Omega), &
	\mu_k&\weakly \mu\quad\text{in } H^{-1}(\Omega).
	\end{align*}
	In the following steps, we will prove that the limits satisfy the optimality system of \cref{thm:c_stationarity}.
To this end, we recall the strong convergences
	\begin{align*}
	u_k&\to \bar u\quad\text{in } L^{2}(\Omega), &
	y_k&\to \bar y\quad\text{in } H_0^1(\Omega)
	\end{align*}
	from \cref{lem:existence_local_solution}, and note that the strong convergence
	 	\begin{equation*}
		\lambda_k\to \lambda\quad\text{in } L^2(\Omega)
		\end{equation*}
	with $\lambda\in \NN_{\Uad}(\bar u)$ is then a simple consequence
	of the gradient equation \eqref{eq:strong_stationarity_low_2} in \cref{lem:optimality_condition_regularized}
	and the closedness of the graph of the normal cone.
	This proves \eqref{eq:c_stat_2} and \eqref{eq:c_stat_6}.
	Also, \eqref{eq:c_stat_1} is immediately clear.
	It remains to prove
		the complementarity condition for $\nu$, as well as the properties \eqref{eq:c_stat_3}-\eqref{eq:c_stat_45}.
 First, let us show that the weak limit $\nu$ fulfills the complementarity condition \eqref{eq:c_stat_5} for the unregularized problem \eqref{eq:P:unreg}.
\begin{lemma}
	\label{lem:complementarity_nu}
	The weak limit $\nu$ fulfills $\nu\ge 0$ as well as $\supp(\nu) \subset \Omega_b$, see \eqref{eq:c_stat_5}.
\end{lemma}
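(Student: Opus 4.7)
The plan is to exploit two properties of the regularized multipliers $\nu_k = \gamma_k \max\{0, y_k - y_b\}$: they are nonnegative as $L^1$-functions, and their supports are contained in $\Omega_{b,k} = \{y_k \geq y_b\}$. Combined with strong uniform convergence $y_k \to \bar y$, this should transfer both nonnegativity and support localization to the weak-$\star$ limit $\nu$.

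First, I would establish the uniform convergence $y_k \to \bar y$ in $C(\bar\Omega)$. This does not follow directly from \cref{lem:existence_local_solution}, which only provides $H_0^1(\Omega)$-convergence, but it is a consequence of $u_k \to \bar u$ in $L^2(\Omega)$ together with the continuity of $S \colon L^2(\Omega) \to C_0^{0,\alpha}(\Omega)$ established in \cref{thm:solution_VI}.

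Next, for the nonnegativity $\nu \ge 0$, I pick an arbitrary $\varphi \in C_0(\Omega)$ with $\varphi \ge 0$. Since $\nu_k \ge 0$ pointwise a.e., we have $\int_\Omega \varphi \, \nu_k \, \d x \ge 0$. Passing to the limit using $\nu_k \weaklystar \nu$ in $\MM(\Omega)$ yields $\int_\Omega \varphi \, \d\nu \ge 0$, and since $\varphi \ge 0$ was arbitrary, $\nu \ge 0$.

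For the support condition, I fix any $\varphi \in C_0(\Omega)$ with $\supp(\varphi) \cap \Omega_b = \emptyset$. Since $\Omega_b = \{\bar y = y_b\}$ is closed, and $\supp(\varphi)$ is compact, the continuous function $y_b - \bar y$ is bounded below by some $\eta > 0$ on $\supp(\varphi)$. By the uniform convergence of $y_k$, for all $k$ sufficiently large we obtain $y_k \le y_b - \eta/2 < y_b$ on $\supp(\varphi)$, hence $\max\{0, y_k - y_b\} \equiv 0$ on $\supp(\varphi)$ and therefore $\int_\Omega \varphi \, \nu_k \, \d x = 0$. Passing to the limit gives $\int_\Omega \varphi \, \d\nu = 0$, which means $\supp(\nu) \subset \Omega_b$.

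The only subtle point is the need for uniform (not merely $H_0^1$) convergence of the states; everything else is a routine passage to the limit against test functions in $C_0(\Omega)$. This subtlety is handled by invoking \cref{thm:solution_VI}, so no serious obstacle arises.
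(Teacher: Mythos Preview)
Your proof is correct, but it takes a different route to the support condition than the paper does. For nonnegativity both arguments are the same. For $\supp(\nu)\subset\Omega_b$, the paper instead proves the complementarity identity $\int_\Omega(\bar y - y_b)\,\d\nu = 0$: from the optimality estimate \eqref{eq:estimate_objectives} one gets $\int_\Omega \nu_k\,(y_k - y_b)\,\d x = \gamma_k\|\max\{0,y_k - y_b\}\|_{L^2(\Omega)}^2 \to 0$, and then passes to the limit using $y_k\to\bar y$ in $C_0(\Omega)$; combined with $\nu\ge 0$ and $\bar y - y_b\le 0$ this forces $\nu$ to be concentrated on $\{\bar y = y_b\}$. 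Your argument bypasses \eqref{eq:estimate_objectives} entirely: you use only the support property $\supp(\nu_k)\subset\{y_k\ge y_b\}$ together with the uniform convergence $y_k\to\bar y$ (via \cref{thm:solution_VI}) to show that $\nu_k$ eventually vanishes on any test function supported away from $\Omega_b$. This is arguably more elementary and self-contained, while the paper's version has the advantage of producing the complementarity relation $\dual{\nu}{\bar y - y_b}=0$ explicitly, which is the natural KKT-type statement.
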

\begin{proof}
Nonnegativity of $\nu$ is an immediate consequence of $\nu_k\ge 0$ for all $k$.
Moreover, from \eqref{eq:estimate_objectives},
	we observe
	\begin{equation*}
		\int_\Omega \nu_k \, (y_k - y_b) \, \d x
		=
		\gamma_k \int_\Omega \max\{0, y_k - y_b\}^2 \, \d x
		\to 0.
	\end{equation*}
	The mapping properties of $S$
	guarantee
		$y_k \to \bar y$
	in $C_0(\Omega)$, and hence
	\begin{equation*}
		\dual{\nu}{\bar y - y_b}_{C_0(\Omega)}
		=
		0.
	\end{equation*}
Feasibility of $\bar y$, i.e.\ $\bar y-y_b\le 0$ concludes the proof.
\end{proof}
The conditions \eqref{eq:c_stat_3} and \eqref{eq:c_stat_4} on $p$ and $\mu$ follow from results in \cite{Wachsmuth2014:2}:
\begin{lemma}
	\label{lem:condition_on_p}
	The weak limit $p$ of $\seq{p_k}$ satisfies $p= 0$ q.e.\ on $\qsupp(\bar\xi)$, see \eqref{eq:c_stat_3}.
\end{lemma}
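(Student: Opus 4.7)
The plan is to pass to the limit in the quasi-everywhere relation $p_k = 0$ on $\qsupp(\xi_k)$ supplied by \eqref{eq:strong_stationarity_low_4} using the strong convergence $\xi_k \to \bar\xi$ in $H^{-1}(\Omega)$ from \cref{lem:existence_local_solution} together with the convergence of $p_k$. Since $p_k$ converges only weakly in $W_0^{1,q}(\Omega)$ with $q<2$, the q.e.-identity has no intrinsic meaning in the limit space and I first need to upgrade the convergence to the Sobolev scale $H^1$ in a neighborhood of the support of $\bar\xi$.

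To that end I would invoke \cref{lem:bound_p_mu}, which provides the uniform bound $\norm{p_k}_{H^1(\hat\Omega_a)} \le C$, where $\hat\Omega_a$ is the open neighborhood of $\set{\bar y = y_a}$ furnished by \cref{lem:sets}. Along a subsequence, $p_k$ then converges weakly in $H^1(\hat\Omega_a)$ to some limit $\tilde p$, and uniqueness of weak limits in $L^q(\hat\Omega_a)$ forces $\tilde p = p|_{\hat\Omega_a}$. By the complementarity relation in the VI, $\qsupp(\bar\xi) \subset \set{\bar y = y_a} \subset \hat\Omega_a$, so the claim reduces to an assertion inside $\hat\Omega_a$, where the sequence is $H^1$-bounded.

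The decisive step is then a capacity-theoretic passage to the limit: the nonpositive functionals $\xi_k \in H^{-1}(\Omega)^-$ converge strongly in $H^{-1}(\Omega)$ to $\bar\xi \in H^{-1}(\Omega)^-$, and the q.e.-vanishing of $p_k$ on $\qsupp(\xi_k)$ must be transported to the weak limit $p$ on $\qsupp(\bar\xi)$. Precisely this implication was established in \cite{Wachsmuth2014:2} (and further refined in \cite{HarderWachsmuth2017:2}) via the fine-topological characterization of the quasi-support; I would quote that result directly and apply it to the extracted subsequence to conclude $p = 0$ q.e.\ on $\qsupp(\bar\xi)$. A subsequence-subsequence argument then removes the subsequence extraction.

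The principal obstacle is this last capacity-theoretic step, because the property \emph{``$w = 0$ q.e.\ on $A$''} is neither stable under weak $H^1$-convergence of $w$ alone nor under unrestricted variation of $A$. The two ingredients that rescue the argument are the strong $H^{-1}$-convergence of the sign-constrained multipliers $\xi_k$ and the fine-topological interpretation of $\qsupp$; both are exactly what makes the cited abstract convergence lemma applicable in the present setting.
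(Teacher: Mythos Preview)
Your proposal is correct and takes essentially the same route as the paper: both reduce the claim to the capacity-theoretic convergence result \cite[Lemma~4.2]{Wachsmuth2014:2}, using \eqref{eq:strong_stationarity_low_4} and the strong convergence $\xi_k \to \bar\xi$ in $H^{-1}(\Omega)$. The paper's proof is a one-line citation of that lemma; you additionally spell out the localization to $\hat\Omega_a$ via the $H^1$-bound from \cref{lem:bound_p_mu}, which is a reasonable clarification since $p_k$ is only bounded in $W_0^{1,q}(\Omega)$ globally and the q.e.\ statement for the limit requires the local $H^1$-regularity of $p$ that the paper records separately in \cref{thm:c_stationarity}.
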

\begin{proof}
	This follows from \eqref{eq:strong_stationarity_low_4} via \cite[Lemma~4.2]{Wachsmuth2014:2}.
\end{proof}
 
\begin{lemma}
	\label{lem:condition_on_mu}
	The weak limit $\mu$ of $\seq{\mu_k}$ satisfies
	\begin{equation*}
		\mu\in\set{v\in H_0^1(\Omega)\given v=0 \text{ q.e. on } \set{\bar y = y_a} }\anni,
	\end{equation*}
	see  \eqref{eq:c_stat_4}.
\end{lemma}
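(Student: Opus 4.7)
The plan is to pass the annihilator property \eqref{eq:strong_stationarity_low_5} of each $\mu_k$ to the limit by means of a strong approximation of the test function. Fix $v \in H_0^1(\Omega)$ with $v = 0$ q.e.\ on $\set{\bar y = y_a}$. Because $\mu_k \weakly \mu$ in $H^{-1}(\Omega)$, it suffices to construct a sequence $v_k \in H_0^1(\Omega)$ with $v_k \to v$ strongly in $H_0^1(\Omega)$ and $v_k = 0$ q.e.\ on $\Omega_{a,k} = \set{y_k = y_a}$; then $\dual{\mu_k}{v_k}_{H_0^1(\Omega)} = 0$ combined with weak-strong convergence yields $\dual{\mu}{v}_{H_0^1(\Omega)} = 0$.

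The approximants will be built by truncation against the \enquote{slack} $y_k - y_a$. Concretely, for $n \in \N$ I would define
\begin{equation*}
	v_k^{(n)} := \operatorname{sign}(v)\, \min\parens[\big]{\abs{v},\, n\,(y_k - y_a)}
	,
\end{equation*}
where $y_k - y_a \ge 0$ q.e.\ because $y_k \in K$. This function lies in $H_0^1(\Omega)$ (one writes $y_k - y_a = y_k + (-y_a)$ and uses $\max\set{y_a,0} \in H_0^1(\Omega)$ from \cref{asm:big_list}~\ref{item:asm_obstacle} to handle the boundary), and it vanishes q.e.\ on $\Omega_{a,k}$ by construction. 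Now one argues in two stages. First, for fixed $n$, since $y_k \to \bar y$ in $H_0^1(\Omega)$ and the lattice operations are continuous on $H_0^1(\Omega)$, one has $v_k^{(n)} \to v^{(n)} := \operatorname{sign}(v)\,\min(\abs{v}, n\,(\bar y - y_a))$ in $H_0^1(\Omega)$. Second, as $n \to \infty$, the functions $v^{(n)}$ converge to $v$ in $H_0^1(\Omega)$: on the quasi-open set $\set{\bar y > y_a}$ the estimate $n\,(\bar y - y_a) > \abs{v}$ holds on a set exhausting this region as $n\to\infty$, so $v^{(n)} \to v$ pointwise q.e.\ there, while $v$ and $v^{(n)}$ both vanish q.e.\ on $\set{\bar y = y_a}$ by hypothesis; the dominated convergence theorem (with dominating function $\abs{v}$ and $\abs{\nabla v}$, using $\nabla v = 0$ a.e.\ on $\set{v = 0}$) then gives the desired convergence of gradients in $L^2$. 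A diagonal argument produces $v_k := v_k^{(n_k)}$ with the required properties.

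The main obstacle is the strong $H_0^1(\Omega)$-convergence $v^{(n)} \to v$ as $n \to \infty$. Weak convergence and $L^2$-convergence are straightforward, but strong convergence of the gradients requires a careful use of the fact that the gradient of an $H_0^1$-function vanishes a.e.\ on the set where the function is zero, applied to $v$ and to the comparison function $n\,(\bar y - y_a)$. Once this technicality is dispatched and the diagonalization is carried out, the passage to the limit $\dual{\mu_k}{v_k} \to \dual{\mu}{v}$ is immediate from the bound on $\norm{\mu_k}_{H^{-1}(\Omega)}$ established in \cref{lem:bound_p_mu}, completing the verification of \eqref{eq:c_stat_4}.
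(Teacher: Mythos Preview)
The paper's own proof is a one-line citation of \cite[Lemma~4.3]{Wachsmuth2014:2}; you are essentially reproving that external lemma. Your overall strategy—construct a recovery sequence by truncating against $n(y_k-y_a)$ and then diagonalize—is precisely the standard route behind that result, and step~(a) (continuity of the lattice operations on $H^1$) is fine.

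There is, however, a real gap in your justification of step~(b). You claim that dominated convergence with majorant $\abs{\nabla v}$ yields $\nabla v^{(n)}\to\nabla v$ in $L^2(\Omega)$. But on the set $\set{\abs{v}>n(\bar y-y_a)}$ one has $\nabla v^{(n)}=\pm\, n\,\nabla(\bar y-y_a)$, which is \emph{not} dominated by $\abs{\nabla v}$; your DCT argument handles only the $\nabla v$-contribution to $\nabla(v-v^{(n)})$, not the term $n\,\nabla(\bar y - y_a)\,\chi_{\set{\abs{v}>n(\bar y-y_a)}}$, whose $L^2$-norm involves $n^2\!\int_{\set{\abs{v}>n(\bar y - y_a)}}\abs{\nabla(\bar y - y_a)}^2\,\d x$ and is not obviously bounded. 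The statement you need—$\min(\abs{v},nw)\to\abs{v}$ in $H_0^1(\Omega)$ whenever $w\ge0$ in $H^1(\Omega)$ and $v=0$ q.e.\ on $\set{w=0}$—is true, but its proof requires a genuine capacity-theoretic argument (it is essentially the identification of $\set{v\in H_0^1(\Omega)\given v=0\text{ q.e.\ on }\set{w=0}}$ with $H_0^1$ of the quasi-open set $\set{w>0}$). This is exactly the nontrivial content packaged in the cited lemma; you have located the crux correctly, but the DCT shortcut does not close it.
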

\begin{proof}
	This follows from \eqref{eq:strong_stationarity_low_5} via \cite[Lemma~4.3]{Wachsmuth2014:2}.
\end{proof}
Finally, we prove \eqref{eq:c_stat_45}.
\begin{lemma}
	\label{lem:sign_p_mu}
The weak limits $p$ and  $\mu$ fulfill
$\dual{\mu}{\Phi p}_{H_0^1(\Omega)} \ge 0$ for all $\Phi \in W^{1,\infty}(\Omega)^+$ that satisfy $\Phi_{|_{\Omega \setminus \hat\Omega_a}}=0,$ see \eqref{eq:c_stat_45}.
\end{lemma}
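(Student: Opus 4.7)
The plan is to pass to the limit in the regularized sign condition \eqref{eq:strong_stationarity_low_6} with $\Phi$ fixed, exploiting that $\Phi$ vanishes outside $\hat\Omega_a$ while $\supp\nu_k \subset \hat\Omega_b$ and $\supp\nu \subset \hat\Omega_b$ are separated from $\hat\Omega_a$ by \cref{lem:sets}; in particular $\Phi\nu_k = 0$ a.e.\ and $\int \Phi\, \d\nu = 0$. From \cref{lem:bound_p_mu}, $p_k$ is bounded in $H^1(\hat\Omega_a)$, hence $\Phi p_k \in H_0^1(\Omega)$ is bounded and (along a subsequence) converges weakly to $\Phi p$. The main obstruction is that $\mu_k \weakly \mu$ in $H^{-1}(\Omega)$ and $\Phi p_k \weakly \Phi p$ in $H_0^1(\Omega)$ are both only weak, so the pairing does not pass to the limit by soft arguments.

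To circumvent this, I would split $p_k = p_k^y + p_k^\nu + p_k^\mu$ with $\AA\adjoint p_k^y = -J_y(y_k,u_k)$, $\AA\adjoint p_k^\nu = -\nu_k$, $\AA\adjoint p_k^\mu = -\mu_k$, all well-posed in $H_0^1(\Omega)$ for the regularized problem. Passing to the limit gives a corresponding splitting $p = p^y + p^\nu + p^\mu$ with $p^y, p^\mu \in H_0^1(\Omega)$ and $p^\nu \in W_0^{1,q}(\Omega)$. For the first component, $J_y(y_k,u_k) \to J_y(\bar y,\bar u)$ strongly in $H^{-1}(\Omega)$ (by continuity of $J_y$ and $(y_k,u_k)\to(\bar y,\bar u)$), so $p_k^y \to p^y$ strongly in $H_0^1(\Omega)$. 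For the second, I apply \cref{lem:bound_adjoint} with a cut-off $\varphi \in C^\infty(\R^n)$ with $\varphi = 1$ on $\hat\Omega_a$ and $\varphi = 0$ in a neighborhood of $\hat\Omega_b$, obtaining $\varphi p_k^\nu \to \varphi p^\nu$ in $H_0^1(\Omega)$; since $\Phi = \Phi\varphi$, also $\Phi p_k^\nu \to \Phi p^\nu$ strongly in $H_0^1(\Omega)$. Therefore
\begin{equation*}
 \dual{\mu_k}{\Phi p_k^y}_{H_0^1(\Omega)} \to \dual{\mu}{\Phi p^y}_{H_0^1(\Omega)},
 \qquad
 \dual{\mu_k}{\Phi p_k^\nu}_{H_0^1(\Omega)} \to \dual{\mu}{\Phi p^\nu}_{H_0^1(\Omega)}.
\end{equation*}

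The hard part is the diagonal term $\dual{\mu_k}{\Phi p_k^\mu}$, where neither factor converges strongly. Here I would use that $\AA\adjoint p_k^\mu = -\mu_k$ in $H^{-1}(\Omega)$ with $p_k^\mu \in H_0^1(\Omega)$, which allows me to rewrite
\begin{equation*}
 \dual{\mu_k}{\Phi p_k^\mu}_{H_0^1(\Omega)} = -\dual{\AA\adjoint p_k^\mu}{\Phi p_k^\mu}_{H_0^1(\Omega)} = -a(\Phi p_k^\mu, p_k^\mu),
\end{equation*}
where $a$ is the bilinear form induced by $\AA$. Expanding $\nabla(\Phi p_k^\mu) = \Phi \nabla p_k^\mu + p_k^\mu \nabla\Phi$ decomposes $a(\Phi p_k^\mu, p_k^\mu)$ into the principal quadratic piece $\int_\Omega \Phi\, \nabla (p_k^\mu)^\top A \,\nabla p_k^\mu\, \d x$ plus lower-order integrals that are linear in $\nabla p_k^\mu$ and contain factors $p_k^\mu$ or $(p_k^\mu)^2$. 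Since $p_k^\mu$ is bounded in $H_0^1(\Omega)$, Rellich yields $p_k^\mu \to p^\mu$ strongly in $L^2(\Omega)$, and the standard weak-times-strong splitting makes every lower-order term pass to its limit. The principal part is weakly lower semicontinuous because $\Phi \ge 0$ and $A$ is positive definite (only its symmetric part enters the diagonal contraction), hence $\liminf_k a(\Phi p_k^\mu, p_k^\mu) \ge a(\Phi p^\mu, p^\mu) = -\dual{\mu}{\Phi p^\mu}_{H_0^1(\Omega)}$, which yields $\limsup_k \dual{\mu_k}{\Phi p_k^\mu} \le \dual{\mu}{\Phi p^\mu}$.

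Combining the three components gives $\limsup_k \dual{\mu_k}{\Phi p_k}_{H_0^1(\Omega)} \le \dual{\mu}{\Phi p}_{H_0^1(\Omega)}$. Since $\dual{\mu_k}{\Phi p_k} \ge 0$ for every $k$ by \eqref{eq:strong_stationarity_low_6}, we obtain $\dual{\mu}{\Phi p}_{H_0^1(\Omega)} \ge 0$, as claimed. I expect the bookkeeping of the lower-order terms (ensuring that each one truly passes to the limit under the mixed weak/strong convergences) together with the non-symmetry of $A$ to be the only delicate points, but both reduce to routine applications of Cauchy--Schwarz and Rellich.
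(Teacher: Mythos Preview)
Your proof is correct and follows essentially the same approach as the paper: the same splitting $p_k = p_k^y + p_k^\nu + p_k^\mu$, strong convergence for $p_k^y$ and for $\varphi p_k^\nu$ via \cref{lem:bound_adjoint}, and a weak lower semicontinuity argument for the diagonal term $\dual{\mu_k}{\Phi p_k^\mu} = -\dual{\AA^\star p_k^\mu}{\Phi p_k^\mu}$. The only cosmetic differences are that the paper cites \cite[Proof of Lemma~4.5]{Wachsmuth2014:2} for the diagonal term rather than expanding it, and handles the $p_k^\nu$ term via $\mu_k = \varphi\mu_k$ (from \eqref{eq:strong_stationarity_low_5}) instead of your equivalent observation $\Phi = \Phi\varphi$.
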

\begin{proof}
	From \eqref{eq:strong_stationarity_low_6} in the C-stationarity system of \cref{lem:optimality_condition_regularized},
	we know
	\begin{equation*}
		\dual{\mu_k}{p_k \Phi}
		\ge
		0
		\qquad\forall \Phi \in W^{1,\infty}(\Omega)^+
		.
	\end{equation*}
	Using the separation of the adjoint state into $p_k^y,p_k^\mu,$ and $p_k^\nu$, we therefore observe
	\begin{equation}\label{eq:aux}
0\le		\dual{\mu_k}{p_k \Phi}
		=
		\dual{\mu_k}{(p_k^y + p_k^\mu + p_k^\nu) \Phi}
		=
		\dual{\mu_k}{p_k^y \Phi}
		+
		\dual{-\AA^\star p_k^\mu}{p_k^\mu \Phi}
		+
		\dual{\mu_k}{p_k^\nu \Phi}
		.
	\end{equation}
	For the first term on the right-hand-side of \eqref{eq:aux}, we note that $p_k^y$ converges strongly in $H_0^1(\Omega)$ due to the mapping properties of $T^\star$, cf. \cref{thm:existenceh1} which is applicable to the adjoint equation. This yields $\dual{\mu_k}{p_k^y\Phi}\to\dual{\mu}{p^y\Phi}$. The arguments in
	\cite[Proof of Lemma~4.5, (4.2)]{Wachsmuth2014:2} applied to the second term yield 
	\begin{equation*}\limsup_{k\to\infty}\dual{-\AA^\star p_k^\mu}{p_k^\mu \Phi}\le \dual{ -\AA^\star p^\mu}{p^\mu \Phi}.\end{equation*}
	Finally, for the third term $\dual{\mu_k}{p_k^\nu \Phi}$ we apply the separation of sets from \cref{lem:sets}, and point out that $\Phi p\in H_0^1(\Omega)$.
	Note that the supports of $\nu$ and all $\nu_k$ are contained in $\hat\Omega_b$. 
	We apply \cref{lem:bound_adjoint} with $U=\hat\Omega_b$ and $V$ an open set containing $\hat\Omega_b$ with positive distance to $\hat\Omega_a$, $\varphi\ge 0$, $\varphi=1$ on $\hat\Omega_a$, and $\varphi=0$ on $\hat\Omega_b$. Hence, $\mu_k = \varphi \mu_k$ converges weakly towards $\mu = \varphi \mu$ in $H^{-1}(\Omega)$ and $\varphi p_k^\nu$ converges strongly to $\varphi p^\nu$ in $H_0^1(\Omega)$.  Thus we obtain
	\begin{equation*}
		\dual{\mu_k}{p_k^\nu \Phi}
		=
		\dual{\mu_k}{\varphi p_k^\nu \Phi}
		\to
		\dual{\mu}{\varphi p^\nu \Phi}
		=
		\dual{\mu}{p^\nu \Phi}.
	\end{equation*}
	Collecting all arguments yields the assertion.
\end{proof}

The proof of \cref{thm:c_stationarity} is complete.

\section{Strong stationarity without control constraints}
\label{sec:strong_stat}
In this section, we consider the problem \eqref{eq:P:unreg}
in the case without control constraints, i.e., $\Uad = L^2(\Omega)$.
We will show that in this case local solutions are strongly stationary.
As a byproduct,
we will obtain a characterization of the normal cone of $\Ustate$.

In the following,
we will follow the approach by \cite[Théorème~4.3]{Mignot1976}
and show the system of strong stationarity
by employing \cref{thm:B_stat_second}.
On the one hand,
this has the advantage
of showing the equivalency of B-stationarity and strong stationarity.
On the other hand,
it enables us to derive the announced characterization
of the normal cone of $\Ustate$.

Before we dive into the proofs,
let us state the system of strong stationarity.
Note that we state the system
without assuming $\Uad = L^2(\Omega)$.
However, we only show that it is a necessary optimality condition
in case $\Uad = L^2(\Omega)$.
\begin{definition}
	\label{def:strong_stationarity}
	Let an admissible control $\bar u \in \Ueff$
	be given.
	We denote by $\bar y = S(\bar u)$
	and $\bar\xi = \bar u - \AA \bar y$
	the associated state and multiplier.
	We say that $\bar u$ is strongly stationary
	if there exist
	$p \in W_0^{1,q}(\Omega)$,
	$\mu \in H^{-1}(\Omega)$,
	$\nu \in \MM(\Omega)^+$,
	$\lambda \in L^2(\Omega)$
	such that
	$p \in H^1(\hat\Omega_a)$
	for some open $\hat\Omega_a \supset \set{\bar y = y_a}$
	and
	such that the system
	\begin{subequations}
		\label{eq:strong_stat}
		\begin{align}
			\label{eq:strong_stat_1}
			\AA\adjoint p + J_y(\bar y,\bar u) + \nu + \mu &= 0
			\\
			\label{eq:strong_stat_2}
			J_u(\bar y,\bar u) + \lambda - p &= 0
			\\
			\label{eq:strong_stat_3}
			p &= 0 \text{ q.e.\ on } \qsupp(\bar \xi), \quad
			p \le 0 \text{ q.e.\ on } \set{\bar y = y_a}
			\\
			\label{eq:strong_stat_4}
			\mu &\in \KK(\bar u)\polar
			\\
			\label{eq:strong_stat_5}
			\supp(\nu) &\subset \Omega_b
			\\
			\label{eq:strong_stat_6}
			\lambda &\in \NN_{\Uad}(\bar u)
		\end{align}
	\end{subequations}
	is satisfied.
	Here,
	$\Omega_s := \qsupp(\bar\xi)$,
	$\Omega_a := \set{\bar y = y_a}$,
	$\Omega_b = \set{\bar y = y_b}$,
	and
	the adjoint equation is to be understood in the very weak sense,
	see \eqref{eq:adjveryweak2}.
\end{definition}
Note that \eqref{eq:strong_stat_6} implies $\lambda = 0$ in case $\Uad = L^2(\Omega)$.

In the sequel of this section,
we will use two smooth test functions.
These functions have the properties
\begin{subequations}
	\label{eq:smotoh_fuct}
	\begin{align}
		\label{eq:smotoh_fuct_1}
		\psi &\in C_c^\infty(\Omega),
		&
		0 &\le \psi \le 1 \text{ in } \Omega,
		&
		\psi &= 0 \text{ in nbhd. of } \Omega_a,
		&
		\psi &= 1 \text{ in nbhd. of } \Omega_b,
		\\
		\varphi &\in C^\infty(\R^n)
		&
		0 &\le \varphi \le 1 \text{ in } \Omega,
		&
		\varphi &= 1 \text{ in } \set{\psi < 1},
		&
		\varphi &= 0 \text{ in nbhd. of } \Omega_b.
		\label{eq:smotoh_fuct_2}
	\end{align}
\end{subequations}
We fix $\varphi$ and $\psi$ throughout this section.
Note that such a choice of $\varphi$ and $\psi$
is possible
since $\Omega_a$ and $\Omega_b$ have a positive
distance
and since $\Omega_b$ has a positive distance to the boundary.

Further,
we argue that
the regularity $p \in H^1(\hat\Omega_a)$
is enough to write down the condition \eqref{eq:strong_stat_3}.
Indeed, it
implies that $\varphi p \in H_0^1(\Omega)$
has a quasi-continuous representative.
Since $\varphi = 1$ in a neighborhood of $\Omega_a$,
$p$ is quasi-continuous on $\Omega_a$
and therefore it makes sense to state \eqref{eq:strong_stat_3}.
In the case that the adjoint state has additionally the regularity $p \in H_0^1(\Omega)$,
one can formulate \eqref{eq:strong_stat_3}
as $p \in -\KK(\bar u)$.

Now, we start with the B-stationarity system
\cref{thm:B_stat_second}.
In order to satisfy \eqref{eq:strong_stat_2} and \eqref{eq:strong_stat_6},
we set
$p = J_u(\bar y,\bar u)$ in case $\Uad = L^2(\Omega)$.
The differentiability properties of $J$
only yield the low regularity $p \in L^2(\Omega)$.
In the next two results,
we show that $p$ enjoys some increased regularity
and that the first-order condition
\eqref{eq:B_stat_second}
can be extended to a larger test spaces.
\begin{lemma}
	\label{lem:p_in_W1q}
	We assume $\Uad = L^2(\Omega)$.
	Let $\bar u \in \Ueff$ satisfy \eqref{eq:B_stat_second}.
	Then, the function $p := J_u(\bar y,\bar u) \in L^2(\Omega)$
	satisfies $p \in W^{1,q}(\Omega)$
	for all $q < n / (n - 1)$.
	Moreover,
	\begin{equation}
		\label{eq:B_stat_W1p}
		\dual{J_y(\bar y,\bar u)}{S'(\bar u; h)} + \dual{h}{p} \ge 0
		\qquad\forall
		h \in W^{-1,q'}(\Omega),
		S'(\bar u; h) \le 0 \text{ on } \Omega_b,
	\end{equation}
	where
	$\Omega_b := \set{\bar y = y_b}$ with $\bar y=S(\bar u)$.
\end{lemma}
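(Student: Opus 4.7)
The plan is to extract a Lagrange multiplier $\nu\in\MM(\Omega)^+$ supported on $\Omega_b$ from the B-stationarity by an infinite-dimensional separation argument leveraging the Slater direction, then identify $p$ with an adjoint state whose right-hand side lies in $\MM(\Omega)$ and hence enjoys $W^{1,q}$ regularity via \cref{thm:existenceadjoint}, and finally extend \eqref{eq:B_stat_second} from $L^2(\Omega)$- to $W^{-1,q'}(\Omega)$-directions by density combined with a Slater-direction correction.

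For the multiplier extraction, note that \cref{cor:conv_direc_deriv} makes $h\mapsto S'(\bar u;h)$ sublinear, and \cref{lem:better_differentiability}\ref{item:better_differentiability_1} renders its restriction $G$ to $\Omega_b$ a continuous sublinear map from $L^2(\Omega)$ into $C(\Omega_b)$. The functional $F(h):=\dual{J_y(\bar y,\bar u)}{S'(\bar u;h)}+\innerprod{p}{h}$ is sublinear and continuous on $L^2(\Omega)$, and \eqref{eq:B_stat_second} says $F\ge 0$ on $\{G\le 0\}$. The Slater direction $\hat h:=\hat u-\bar u$ satisfies $\bar y+S'(\bar u;\hat h)\le S(\hat u)\le y_b-\tau$ by \cref{cor:conv_direc_deriv} together with \cref{asm:big_list}\ref{item:asm_slater}, whence $G(\hat h)\le-\tau$. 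A Hahn--Banach separation (in sublinear-sandwich / generalized Farkas form) then yields $\nu\in\MM(\Omega_b)^+\subset\MM(\Omega)^+$ such that
\[
F(h)+\int_{\Omega_b}S'(\bar u;h)\,\d\nu\ge 0\qquad\forall h\in L^2(\Omega).
\]

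To identify $p$, let $\tilde p\in W_0^{1,q}(\Omega)$ denote the very weak solution of $\AA^\star\tilde p=-J_y(\bar y,\bar u)-\nu$ from \cref{thm:existenceadjoint}. Writing $z:=S'(\bar u;h)$ via the VI \eqref{eq:direc_diff} as $\AA z=h-\eta$ with $\eta\in\KK(\bar u)\polar$, the previous display recasts as
\[
\innerprod{p-\tilde p}{h}+\dual{\tilde p}{\eta}\ge 0\qquad\forall h\in L^2(\Omega).
\]
Applying this with both $\pm h$ and restricting to test directions whose associated $\eta$ vanishes on both sides — concretely, $h=\AA z$ for $z$ in the lineality space of $\KK(\bar u)$ — forces $\innerprod{p-\tilde p}{h}=0$ on a subspace of $L^2$ dense among functions supported away from $\qsupp(\bar\xi)\cup\{\bar y=y_a\}$. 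The residual $\eta$-contributions are concentrated on a neighborhood of $\Omega_a$, which has positive distance from $\Omega_b\supset\supp\nu$ by the Slater assumption, so the localized adjoint regularity \cref{lem:bound_adjoint} (applied with $U$ a compact neighborhood of $\Omega_a$ disjoint from $\Omega_b$) absorbs them and we conclude $p=\tilde p\in W_0^{1,q}(\Omega)\subset W^{1,q}(\Omega)$.

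For the extension, fix $h\in W^{-1,q'}(\Omega)$ with $S'(\bar u;h)\le 0$ on $\Omega_b$ and choose $h_k\in L^2(\Omega)$ with $h_k\to h$ in $W^{-1,q'}(\Omega)$. \cref{lem:better_differentiability}\ref{item:better_differentiability_1} gives $\delta_k:=\norm{S'(\bar u;h_k)-S'(\bar u;h)}_{L^\infty(\Omega_b)}\to 0$, and the sublinearity from \cref{cor:conv_direc_deriv} together with $S'(\bar u;\hat h)\le-\tau$ on $\Omega_b$ yields $S'(\bar u;h_k+\epsilon_k\hat h)\le\delta_k-\epsilon_k\tau\le 0$ on $\Omega_b$ for the choice $\epsilon_k:=\delta_k/\tau\to 0$. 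Applying \eqref{eq:B_stat_second} to $h_k+\epsilon_k\hat h\in L^2(\Omega)$ and passing to the limit using Hadamard differentiability of $S$ (cf.\ \cref{thm:direc_diff}) for the first summand and the $W_0^{1,q}$--$W^{-1,q'}$ duality for the second yields \eqref{eq:B_stat_W1p}. The main obstacle is the middle step: cleanly matching $p$ to $\tilde p$ through the nonlinearity entering via $\eta\in\KK(\bar u)\polar$, which needs Mignot-type critical-cone analysis combined with the localized adjoint regularity near $\Omega_a$.
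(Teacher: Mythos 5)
Your third step (the density extension of \eqref{eq:B_stat_second} to $W^{-1,q'}(\Omega)$-directions via a Slater correction $\epsilon_k\hat h$) is exactly the paper's argument and is fine, but it presupposes $p\in W_0^{1,q}(\Omega)$, and your route to that regularity has two genuine gaps. First, the multiplier extraction: you assert that $F(h)=\dual{J_y(\bar y,\bar u)}{S'(\bar u;h)}+\innerprod{p}{h}$ is sublinear. The sublinearity of $S'(\bar u;\cdot)$ from \cref{cor:conv_direc_deriv} is only a \emph{pointwise a.e.\ order} inequality; pairing it with $J_y(\bar y,\bar u)\in H^{-1}(\Omega)$, which carries no sign, does not preserve convexity. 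Hence $F$ is in general neither convex nor sublinear, and the Hahn--Banach/generalized-Farkas step that produces $\nu$ has no basis. (This is precisely why the paper postpones the construction of $\nu$ to \cref{lem:adjoint_PDE_nu}, where the test directions are of the special form $h=\AA(\psi z)$ so that $S'(\bar u;h)=\psi z$ is \emph{linear} in $z$.)

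Second, even granting $\nu$, the identification $p=\tilde p$ with $\AA^\star\tilde p=-J_y(\bar y,\bar u)-\nu$ is false in general: it omits the obstacle multiplier $\mu\in\KK(\bar u)\polar$ appearing in \eqref{eq:strong_stat_1}. Your own reduction shows $\innerprod{p-\tilde p}{\AA z}=0$ only for $z$ in the lineality space of $\KK(\bar u)$, i.e.\ $z=0$ q.e.\ on $\set{\bar y=y_a}$; the corresponding right-hand sides $\AA z$ are not dense in $L^2(\Omega)$ when $\Omega_a$ has positive capacity, so one only learns that $\AA^\star(p-\tilde p)$ annihilates that subspace --- which is the statement $\mu\ne 0$, not $p=\tilde p$. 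Moreover the pairing $\dual{\tilde p}{\eta}$ with $\eta\in H^{-1}(\Omega)$ and $\tilde p\in W_0^{1,q}(\Omega)$ is not even defined without the localized $H^1$-regularity you are trying to prove. You flag this step yourself as the main obstacle; it is indeed unresolved, and resolving it amounts to proving the adjoint system, which the paper only does \emph{after} \cref{lem:p_in_W1q}. The paper's proof of the regularity itself is far more elementary and avoids all duality: by \cref{lem:better_differentiability}~\ref{item:better_differentiability_1} one has $\norm{S'(\bar u;h)}_{L^\infty(\Omega_b)}\le C\norm{h}_{W^{-1,q'}(\Omega)}$, so for every $h$ in a small $W^{-1,q'}$-ball the shifted direction $h+(\hat u-\bar u)$ is admissible in \eqref{eq:B_stat_second} thanks to the Slater slack $S'(\bar u;\hat u-\bar u)\le S(\hat u)-S(\bar u)\le-\tau$ on $\Omega_b$; applying \eqref{eq:B_stat_second} to $\pm h+(\hat u-\bar u)$ gives a two-sided bound $\abs{\innerprod{p}{h}}\le K$ on that ball, whence $p\in W_0^{1,q}(\Omega)$ by scaling and density. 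I recommend replacing your first two steps by this direct boundedness argument.
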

\begin{proof}
	Let $q \in (1, n / (n-1))$ be given,
	i.e., $q' \in (n, \infty)$.
	For all $h \in L^2(\Omega)$,
	we have $z_h := S'(\bar u; h)$
	satisfies
	$\AA z = h - \upsilon_h$
	with $\upsilon_h \in \KK(\bar u)\polar$
	and $\norm{\upsilon_h}_{H^{-1}(\Omega)} \le \norm{h}_{H^{-1}(\Omega)}$
	Now, from
	\cref{lem:better_differentiability}~\ref{item:better_differentiability_1}
	we get the estimate
	\begin{equation*}
		\norm{ S'(\bar u; h) }_{L^\infty(\Omega_b)}
		=
		\norm{ S'(\bar u; h) - S'(\bar u; 0) }_{L^\infty(\Omega_b)}
		\le
		C \norm{h}_{W^{-1,q'}(\Omega)},
	\end{equation*}
	where $C > 0$ is independent of $h$.
	We set $c := \zeta / C$.
	Then, for all $h \in L^2(\Omega)$
	with $\norm{h}_{W^{-1,q'}(\Omega)} \le c$
	we have
	\begin{align*}
		S'(\bar u; h + (\hat u - \bar u))
		&\le
		S'(\bar u; h)
		+
		S'(\bar u; \hat u - \bar u)
		\\&
		\le
		S'(\bar u; h) + S(\hat u) - S(\bar u)
		\le
		\zeta + (y_b - \zeta) - y_b
		=
		0
	\end{align*}
	on $\Omega_b$, see \cref{cor:conv_direc_deriv}.
	Therefore, we can use
	$h + (\hat u - \bar u)$
	as a test function in
	\cref{thm:B_stat_second}
	and obtain
	\begin{equation*}
		\dual{J_y(\bar y,\bar u)}{S'(\bar u; h + (\hat u - \bar u))}
		+
		\innerprod{J_u(\bar y,\bar u)}{h + (\hat u - \bar u)}
		\ge
		0
		.
	\end{equation*}
	Thus, there is $K > 0$
	such that
	\begin{equation*}
		\innerprod{J_u(\bar y,\bar u)}{h}
		\ge
		-\dual{J_y(\bar y,\bar u)}{S'(\bar u; h + (\hat u - \bar u))}
		-\innerprod{J_u(\bar y,\bar u)}{\hat u - \bar u}
		\ge
		-K
	\end{equation*}
	holds for all $h \in L^2(\Omega)$
	with $\norm{h}_{W^{-1,q}(\Omega)} \le c$.
	Since we can replace $h$ by $-h$, we infer
	\begin{equation*}
		\abs{\innerprod{J_u(\bar y,\bar u)}{h}} \le K
		\qquad\forall h \in L^2(\Omega), 
		\norm{h}_{W^{-1,q}(\Omega)} \le C.
	\end{equation*}
	By scaling we get
	\begin{equation*}
		\abs{\innerprod{J_u(\bar y,\bar u)}{h}} \le \frac{K}{C} \norm{h}_{W^{-1,q}(\Omega)}
		\qquad\forall h \in L^2(\Omega). 
	\end{equation*}
	This and the density of $L^2(\Omega)$ in $W^{-1,q}(\Omega)$,
	imply
	$p = J_u(\bar y,\bar u) \in W^{1,q}_0(\Omega)$.

	It remains to show \eqref{eq:B_stat_W1p}.
	Let $h \in W^{-1,q'}(\Omega)$
	with $S'(\bar u; h) \le 0$ on $\Omega_b$ be given.
	Then, there is a sequence $\seq{h_k}_{k \in \N} \subset L^2(\Omega)$
	with $h_k \to h$ in $W^{-1,q'}(\Omega)$.
	Then,
	\cref{lem:better_differentiability}~\ref{item:better_differentiability_1}
	implies
	\begin{equation*}
		r_k
		:=
		\norm{S'(\bar u; h_k) - S'(\bar u; h)}_{L^\infty(\Omega_b)}
		\to
		0
		.
	\end{equation*}
	Thus,
	\begin{equation*}
		S'(\bar u; h_k + r_k \zeta^{-1} \, (\hat u - \bar u))
		\le
		0
		\qquad\text{a.e.\ on } \Omega_b.
	\end{equation*}
	Hence,
	\begin{equation*}
		\dual{J_y(\bar y,\bar u)}{S'(\bar u; h_k + r_k \zeta^{-1} \, (\hat u - \bar u))}
		+
		\innerprod{J_u(\bar y,\bar u)}{h_k + r_k \zeta^{-1} \, (\hat u - \bar u)}
		\ge
		0
		.
	\end{equation*}
	Due to $J_u(\bar y,\bar u) = p \in W_0^{1,q}(\Omega)$,
	we can pass to the limit $k \to \infty$ and obtain
	\eqref{eq:B_stat_W1p}.
\end{proof}

Using similar arguments,
we get that $p$ has $H^1$-regularity
if we stay away from the active set $\Omega_b$.

\begin{lemma}
	\label{lem:p_in_H1}
	We assume $\Uad = L^2(\Omega)$.
	Let $\bar u \in \Ueff$ satisfy \eqref{eq:B_stat_second}
	and
	$p := J_u(\bar y,\bar u) \in L^2(\Omega)$.
	We have
	$\varphi p \in H_0^1(\Omega)$
	and
	\begin{equation}
		\label{eq:B_stat_H1}
		\dual{J_y(\bar y,\bar u)}{S'(\bar u; \varphi h)} + \dual{h}{\varphi p} \ge 0
		\qquad\forall
		h \in H^{-1}(\Omega),
		S'(\bar u; \varphi h) \le 0 \text{ on } \Omega_b.
	\end{equation}
\end{lemma}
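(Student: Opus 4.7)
The plan is to adapt the proof of \cref{lem:p_in_W1q}, using the cut-off $\varphi$ to trade the weaker test-space $H^{-1}(\Omega)$ against the regularity requirement that $\varphi p$ (rather than $p$ alone) lies in $H_0^1(\Omega)$. The crucial new ingredient is a localized $L^\infty(\Omega_b)$-estimate replacing \cref{lem:better_differentiability}~\ref{item:better_differentiability_1} with the $H^{-1}$-norm on the right-hand side; the rest is a systematic relabeling.

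First, I would establish the auxiliary bound
\[
	\norm{S'(\bar u;\varphi h)}_{L^\infty(\Omega_b)} \le C \norm{h}_{H^{-1}(\Omega)}
	\qquad\forall h \in L^2(\Omega).
\]
Setting $z := S'(\bar u;\varphi h)$, we have $\AA z = \varphi h - \hat\xi$ with $\hat\xi \in \KK(\bar u)\polar$ and $\norm{\hat\xi}_{H^{-1}(\Omega)} \le C\norm{h}_{H^{-1}(\Omega)}$. Choosing $\chi \in C_c^\infty(\Omega)$ with $\chi = 1$ on a neighborhood of $\Omega_b$ and $\supp\chi \subset \set{\varphi = 0}$ (possible by \eqref{eq:smotoh_fuct_2}), every $\chi v$ with $v \in H_0^1(\Omega)$ vanishes on a neighborhood of $\Omega_a$ and hence q.e.\ on $\qsupp(\bar\xi)$, so $\pm\chi v \in \KK(\bar u)$ and $\dual{\hat\xi}{\chi v} = 0$. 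Combined with $\chi\varphi \equiv 0$, the commutator computation from the proof of \cref{lem:hinfboundu} yields $\norm{\AA(\chi z)}_{W^{-1,6}(\Omega)} \le C\norm{z}_{H_0^1(\Omega)} \le C\norm{h}_{H^{-1}(\Omega)}$, and the claimed estimate follows from \cref{thm:hoelderreg} since $\chi = 1$ near $\Omega_b$. This is the main obstacle: one must recognize that $\hat\xi$, although only an abstract element of $H^{-1}(\Omega)$, is effectively supported away from $\Omega_b$ in the sense that it annihilates $H_0^1$-functions localized near $\Omega_b$, which is precisely what is needed to apply the interior-regularity machinery.

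Combining this bound with \cref{cor:conv_direc_deriv} and $S(\hat u) - \bar y \le -\tau$ on $\Omega_b$, for every $h \in L^2(\Omega)$ the direction $h' := \varphi h + C\tau^{-1}\norm{h}_{H^{-1}(\Omega)}(\hat u - \bar u)$ satisfies $S'(\bar u; h') \le 0$ on $\Omega_b$ and is thus admissible in \eqref{eq:B_stat_second}. Inserting $h'$ and the analogous direction for $-h$ and exploiting the Lipschitz continuity of $S'(\bar u;\cdot) \colon H^{-1}(\Omega) \to H_0^1(\Omega)$ together with $J_y(\bar y,\bar u) \in H^{-1}(\Omega)$ to absorb the $J_y$-terms, I obtain
\[
	\abs{\innerprod{p}{\varphi h}} \le C \norm{h}_{H^{-1}(\Omega)} \qquad\forall h \in L^2(\Omega),
\]
with $p = J_u(\bar y,\bar u)$. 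Since $L^2(\Omega)$ is dense in $H^{-1}(\Omega)$ and $H_0^1(\Omega)$ coincides with $H^{-1}(\Omega)\dualspace$, the functional $h \mapsto \innerprod{\varphi p}{h}$ extends to a bounded linear functional on $H^{-1}(\Omega)$, which gives $\varphi p \in H_0^1(\Omega)$.

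Finally, \eqref{eq:B_stat_H1} follows by approximation: given $h \in H^{-1}(\Omega)$ with $S'(\bar u;\varphi h) \le 0$ on $\Omega_b$, pick $h_k \in L^2(\Omega)$ with $h_k \to h$ in $H^{-1}(\Omega)$, set $r_k := \norm{S'(\bar u;\varphi h_k) - S'(\bar u;\varphi h)}_{L^\infty(\Omega_b)}$, which tends to zero by the first step, and use the admissible test directions $\varphi h_k + r_k\tau^{-1}(\hat u - \bar u)$ in \eqref{eq:B_stat_second}. The passage to the limit is routine once one notes that $\innerprod{J_u}{\varphi h_k} = \dual{h_k}{\varphi p}_{H^{-1}(\Omega),H_0^1(\Omega)}$, valid exactly because $\varphi p \in H_0^1(\Omega)$, and that $S'(\bar u;\cdot)$ is Lipschitz from $H^{-1}(\Omega)$ into $H_0^1(\Omega)$.
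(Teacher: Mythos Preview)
Your proof is correct and follows the same route as the paper, which merely says ``replace \cref{lem:better_differentiability}~\ref{item:better_differentiability_1} by \cref{lem:better_differentiability}~\ref{item:better_differentiability_3} and argue as in \cref{lem:p_in_W1q}.'' The only cosmetic difference is that you re-derive the localized estimate $\norm{S'(\bar u;\varphi h)}_{L^\infty(\Omega_b)}\le C\norm{h}_{H^{-1}(\Omega)}$ by hand via a cut-off $\chi$ and the commutator argument, whereas the paper simply invokes \cref{lem:better_differentiability}~\ref{item:better_differentiability_3} (whose proof is exactly this cut-off argument); everything else---the Slater correction, the two-sided bound on $\innerprod{\varphi p}{h}$, and the density step---matches.
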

\begin{proof}
	The proof is very similar to the proof of \cref{lem:p_in_W1q}.
	We mainly have to replace the regularity result
	\cref{lem:better_differentiability}~\ref{item:better_differentiability_1}
	by
	\cref{lem:better_differentiability}~\ref{item:better_differentiability_3}.
	This yields
	\begin{equation*}
		\norm{
			S'(\bar u; \varphi h)
		}_{L^\infty(\Omega_b)}
		\le
		C \norm{h}_{H^{-1}(\Omega)}
	\end{equation*}
	for all $h \in L^2(\Omega)$,
	where $C > 0$ is independent of $h$.
	Now we can argue along the lines of the proof of \cref{lem:p_in_W1q}.
\end{proof}

Using this extended stationarity condition,
we can show the sign conditions on $p$.
\begin{lemma}
	\label{lem:sign_conditions_p}
	We assume $\Uad = L^2(\Omega)$.
	Let $\bar u \in \Ueff$ satisfy \eqref{eq:B_stat_second}
	and
	$p := J_u(\bar y,\bar u) \in L^2(\Omega)$.
	Then, $p \in H^1(\hat\Omega_a)$
	for some open $\hat\Omega_a \supset \Omega_a := \set{S(\bar u) = y_a}$
	and
	\eqref{eq:strong_stat_3} holds.
\end{lemma}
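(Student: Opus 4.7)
My plan is to combine the regularity already established in \cref{lem:p_in_H1} with the extended B-stationarity \eqref{eq:B_stat_H1} and a capacity argument. For the regularity, I would set $\hat\Omega_a := \set{\psi < 1}$: by \eqref{eq:smotoh_fuct_1}--\eqref{eq:smotoh_fuct_2} this set is open, contains $\Omega_a$, and satisfies $\varphi \equiv 1$ on it. Since $\varphi p \in H_0^1(\Omega)$ by \cref{lem:p_in_H1} and $p = \varphi p$ on $\hat\Omega_a$, this gives $p \in H^1(\hat\Omega_a)$, so $p$ admits a quasi-continuous representative on $\hat\Omega_a$ and (using $\qsupp(\bar\xi) \subset \Omega_a \subset \hat\Omega_a$) the q.e.\ conditions in \eqref{eq:strong_stat_3} become meaningful.

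The key reduction I would then exploit is that $S'(\bar u; \varphi h) = 0$ whenever $\varphi h \in \KK(\bar u)\polar$: inserting the candidate $z = 0 \in \KK(\bar u)$ into the VI \eqref{eq:direc_diff} defining the directional derivative reduces to exactly the condition $\varphi h \in \KK(\bar u)\polar$, and by uniqueness $z = 0$ is the derivative. Consequently the constraint $S'(\bar u; \varphi h) \le 0$ on $\Omega_b$ in \eqref{eq:B_stat_H1} is trivially satisfied, and \eqref{eq:B_stat_H1} collapses to
\begin{equation*}
	\dual{h}{\varphi p} \ge 0 \qquad \forall h \in H^{-1}(\Omega) \text{ with } \varphi h \in \KK(\bar u)\polar.
\end{equation*}
Applying this with $h := -\eta$ for a nonnegative measure $\eta \in H^{-1}(\Omega)$ supported in $\Omega_a$: then $\varphi \eta = \eta$ because $\varphi \equiv 1$ on $\hat\Omega_a \supset \Omega_a$, and $-\eta \in \KK(\bar u)\polar$ because every $v \in \KK(\bar u)$ satisfies $v \ge 0$ q.e.\ on $\Omega_a$, so the above yields $\int \varphi p\, \d\eta \le 0$. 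If $\set{\varphi p > 0} \cap \Omega_a$ had positive capacity, I could select such an $\eta$ supported there with strictly positive pairing, contradicting this inequality; hence $\varphi p \le 0$ q.e.\ on $\Omega_a$. The analogous step with $h \in H^{-1}(\Omega)$ supported on $\qsupp(\bar\xi)$ — for which $\pm h \in \KK(\bar u)\polar$ because every $v \in \KK(\bar u)$ vanishes q.e.\ on $\qsupp(\bar\xi)$ — gives $\dual{h}{\varphi p} = 0$, and a parallel capacity argument forces $\varphi p = 0$ q.e.\ on $\qsupp(\bar\xi)$.

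I expect the main obstacle to be the final capacity step: the existence of enough nonnegative $H^{-1}$-measures supported on quasi-closed subsets of positive capacity, allowing one to pass from integral inequalities to the pointwise q.e.\ behaviour of the quasi-continuous representative. This will have to be justified through the quasi-support machinery of \cite{Wachsmuth2013:2,HarderWachsmuth2017:2}, together with a careful verification that the sets $\set{\varphi p > 0} \cap \Omega_a$ and $\set{\varphi p \ne 0} \cap \qsupp(\bar\xi)$ are quasi-closed and amenable to the construction of such test measures.
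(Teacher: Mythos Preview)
Your approach is correct, and the regularity part and the key reduction (that $S'(\bar u;\varphi h)=0$ whenever $\varphi h\in\KK(\bar u)\polar$) match the paper exactly. The difference lies in the final step. After obtaining $\dual{h}{\varphi p}\ge 0$ for all $h$ with $\varphi h\in\KK(\bar u)\polar$, the paper observes a shortcut you do not use: since $\varphi\ge 0$ and $\varphi$ vanishes outside a neighborhood of $\Omega_a$, multiplication by $\varphi$ maps $\KK(\bar u)$ into itself, hence also $\KK(\bar u)\polar$ into itself. Consequently one can test with \emph{every} $h\in\KK(\bar u)\polar$, and the bipolar theorem immediately yields $\varphi p\in -\KK(\bar u)$. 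The sign conditions \eqref{eq:strong_stat_3} then drop out of the explicit description \eqref{eq:crit_cone} of $\KK(\bar u)$, with no capacity argument required.

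Your route via test measures and capacity would also succeed (the existence of nonnegative $H^{-1}$-measures concentrated on quasi-closed sets of positive capacity is standard, e.g.\ via equilibrium measures), but it effectively re-derives a special case of the characterization of $\KK(\bar u)$ that \cref{thm:direc_diff} already provides. The paper's bipolar argument is shorter and avoids the justification you correctly flag as the main obstacle.
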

\begin{proof}
	We choose $\hat\Omega_a$ with a positive distance to
	$\Omega_b := \set{S(\bar u) = y_b}$.
	Then, there exist $\varphi \in C_c^\infty(\Omega)$
	such that
	the support of $\varphi$ does not intersect $\Omega_b$,
	$\varphi \ge 0$
	and $\varphi = 1$ on $\hat\Omega_a$.
	Now, \cref{lem:p_in_H1}
	implies $p \in H^1(\hat\Omega_a)$.

	Next,
	let $h \in \KK(\bar u)\polar$
	be arbitrary.
	Since $\varphi v \in \KK(\bar u)$
	for all $v \in \KK(\bar u)$,
	we have
	$\varphi h \in \KK(\bar u)\polar$ as well.
	Thus,
	$S'(\bar u; \varphi h) = 0$,
	see \cref{thm:direc_diff}.
	Thus, \eqref{eq:B_stat_H1} implies
	\begin{equation*}
		\dual{h}{\varphi p} \ge 0
		\qquad\forall h \in \KK(\bar u)\polar.
	\end{equation*}
	Hence, $\varphi p \in -\KK(\bar u)$.
	Since $\varphi = 1$ in a neighborhood of $\Omega_a$,
	this shows \eqref{eq:strong_stat_3}.
\end{proof}

It remains to verify the adjoint equation
and the sign conditions on $\mu$ and $\nu$.
First,
we consider the adjoint equation in a neighborhood of $\Omega_b$.
\begin{lemma}
	\label{lem:adjoint_PDE_nu}
	We assume $\Uad = L^2(\Omega)$.
	Let $\bar u \in \Ueff$ satisfy \eqref{eq:B_stat_second}
	and
	$p := J_u(\bar y,\bar u) \in W_0^{1,q}(\Omega)$
	with arbitrary $q \in (2 n / (n + 2), n/(n-1))$.
	Then, there is $\nu \in \MM(\Omega)$
	such that \eqref{eq:strong_stat_5}
	and
	\begin{equation}
		\label{eq:adjoint_PDE_nu}
		\dual{\AA (\psi z)}{p}
		+
		\dual{J_y(\bar y,\bar u)}{\psi z}
		+
		\int_\Omega z \, \d\nu
		=
		0
		\qquad\forall z \in Z,
	\end{equation}
	where $Z$ is defined in \eqref{eq:def_Z}.
\end{lemma}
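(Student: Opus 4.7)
The plan is to realize $\nu$ as the Riesz representation of a non-negative bounded linear functional on $C(\Omega_b)$ that encodes the B-stationarity condition \eqref{eq:B_stat_W1p}, in the spirit of \cite[Th\'eor\`eme~4.3]{Mignot1976}. The key observation is that for every $z\in Z$, the product $\psi z$ lies in the linearity subspace of the critical cone $\KK(\bar u)$. Indeed, by \eqref{eq:smotoh_fuct_1} the function $\psi$ vanishes on a neighborhood of $\Omega_a$, and because $\qsupp(\bar\xi)\subset\Omega_a$, the product $\psi z$ vanishes q.e.\ on both $\Omega_a$ and $\qsupp(\bar\xi)$. The representation \eqref{eq:crit_cone} then gives $\pm\psi z\in\KK(\bar u)$. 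Inserting $v=\psi z$ into the VI \eqref{eq:direc_diff} and using the uniqueness of its solution, I obtain $S'(\bar u;\AA(\psi z))=\psi z$, and analogously $S'(\bar u;-\AA(\psi z))=-\psi z$.

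With this at hand, I define the linear functional
\begin{equation*}
\ell(z):=-\dual{\AA(\psi z)}{p}-\dual{J_y(\bar y,\bar u)}{\psi z},\qquad z\in Z,
\end{equation*}
which is well defined because \cref{lem:smooth_multiplier} (whose range condition on $q'$ is satisfied thanks to the standing hypothesis $q\in(2n/(n+2),n/(n-1))$) yields $\AA(\psi z)\in W^{-1,q'}(\Omega)$. For $z\in Z$ with $z\ge 0$ on $\Omega_b$, the direction $h:=-\AA(\psi z)\in W^{-1,q'}(\Omega)$ satisfies $S'(\bar u;h)=-\psi z\le 0$ on $\Omega_b$ (since $\psi\equiv 1$ in a neighborhood of $\Omega_b$), so \eqref{eq:B_stat_W1p} yields $\ell(z)\ge 0$. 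Now I fix $\chi\in C_c^\infty(\Omega)$ with $0\le\chi\le 1$ and $\chi\equiv 1$ on $\Omega_b$; note $\chi\in Z$. For arbitrary $z\in Z$ and $c:=\|z\|_{C(\Omega_b)}$, the functions $c\chi\pm z\in Z$ are non-negative on $\Omega_b$, hence $|\ell(z)|\le\ell(\chi)\,\|z\|_{C(\Omega_b)}$; in particular, $\ell$ depends only on $z|_{\Omega_b}$.

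Because $\Omega_b$ has positive distance to $\Gamma$ (since $y_b>0$ on $\Gamma$), it is a compact subset of $\Omega$, so Tietze extension combined with mollification shows that every $f\in C(\Omega_b)$ is a uniform limit of the restrictions of $C_c^\infty(\Omega)\subset Z$-functions. Thus the restriction space $\{z|_{\Omega_b}:z\in Z\}$ is uniformly dense in $C(\Omega_b)$, and by the estimate above $\ell$ extends by continuity to a bounded linear functional on $C(\Omega_b)$. The Riesz--Markov theorem delivers a signed Borel measure $\nu$ on $\Omega_b$ with $\ell(z)=\int z\,\d\nu$ for all $z\in Z$ and $|\nu|(\Omega_b)\le\ell(\chi)$. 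Since $\int\chi\,\d\nu=\ell(\chi)\ge|\nu|(\Omega_b)$, the Jordan decomposition forces $\nu\ge 0$. Extending $\nu$ by zero to all of $\Omega$ gives $\nu\in\MM(\Omega)^+$ with $\supp(\nu)\subset\Omega_b$, proving \eqref{eq:strong_stat_5}, and unwinding the definition of $\ell$ yields \eqref{eq:adjoint_PDE_nu}.

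The main technical hurdle is the identification $\pm\psi z\in\KK(\bar u)$ and the resulting explicit formula for $S'(\bar u;\pm\AA(\psi z))$, where the separation of the active sets $\Omega_a,\Omega_b$, the cutoff construction \eqref{eq:smotoh_fuct_1}, and the inclusion $\qsupp(\bar\xi)\subset\Omega_a$ must be combined. Once this sign information is in place, the extraction of $\nu$ is a routine Hahn--Banach/Riesz construction.
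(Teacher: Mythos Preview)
Your proof is correct and follows essentially the same route as the paper: identify $\pm\psi z\in\KK(\bar u)$ so that $S'(\bar u;\pm\AA(\psi z))=\pm\psi z$, feed this into \eqref{eq:B_stat_W1p} to obtain a positive, $C(\Omega_b)$-bounded linear functional, and invoke Riesz--Markov. The only cosmetic differences are that you compare against a bump function $\chi$ with $c=\|z\|_{C(\Omega_b)}$ (the paper shifts by the constant $M=\|z\|_{C_0(\Omega)}$) and that you build $\nu$ directly on $\Omega_b$, so \eqref{eq:strong_stat_5} comes for free rather than via a separate test with $z\in C_c^\infty(\Omega)$ vanishing on $\Omega_b$.
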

\begin{proof}
	We have
	$1/q \in (1 - 1/n, 1/2 + 1/n)$
	and
	$1/q' \in (1/2 - 1/n, 1/n)$.
	Thus, \cref{lem:smooth_multiplier}
	implies $\AA (\psi z) \in W^{-1,q'}(\Omega)$,
	hence the first term in \eqref{eq:adjoint_PDE_nu}
	is well defined.

	Due to the properties of $S'(\bar u; \cdot)$,
	we have
	$S'(\bar u; \AA(\psi z)) = \psi z$
	for all $z \in Z$.
	If, additionally, $z \le 0$ on $\Omega_b$,
	we have by \eqref{eq:B_stat_W1p}
	\begin{equation*}
		\dual{J_y(\bar y,\bar u)}{\psi z} + \dual{\AA(\psi z)}{p} \ge 0
		.
	\end{equation*}
	Hence,
	the left-hand side defines a negative
	functional w.r.t.
	$z \in Z$.
	Moreover, if $M := \norm{z}_{C_0(\Omega)}$,
	we have $\varphi \, (z + M) \ge 0$ on $\Omega_b$,
	thus
	\begin{equation*}
		\dual{J_y(\bar y,\bar u)}{\psi \, (z + M)} + \dual{\AA(\psi \, (z + M))}{p} \ge 0
		,
	\end{equation*}
	i.e.,
	\begin{equation*}
		\dual{J_y(\bar y,\bar u)}{\psi z} + \dual{\AA(\psi z)}{p}
		\ge
		-\dual{J_y(\bar y,\bar u)}{M \psi} - \dual{\AA(M \psi)}{p}
		=:
		-C M
		.
	\end{equation*}
	Similarly, by considering $\varphi \, (z - M)$,
	one can show that the left-hand side
	is bounded from above by $C M$.
	Since $C_c^\infty(\Omega)^+ \subset Z$ is dense in $C_0(\Omega)^+$,
	the first two addends in \eqref{eq:adjoint_PDE_nu}
	define a negative Borel measure $-\nu \in \MM(\Omega)$.
	This shows \eqref{eq:adjoint_PDE_nu}.
	Moreover,
	by considering $z \in C_c^\infty(\Omega)$
	with $z = 0$ on $\Omega_b$ is arbitrary,
	we get $\int_\Omega z \, \d\nu = 0$.
	Hence, \eqref{eq:strong_stat_5}
	follows.
\end{proof}
Next,
we argue in the neighborhood of $\Omega_a$.
To this end, we use the test function $\varphi$
from \eqref{eq:smotoh_fuct_2}.
\begin{lemma}
	\label{lem:adjoint_PDE_mu}
	We assume $\Uad = L^2(\Omega)$.
	Let $\bar u \in \Ueff$ satisfy \eqref{eq:B_stat_second}
	and
	$p := J_u(\bar y\bar u)$.
	We define $\mu \in H^{-1}(\Omega)$
	via
	\begin{equation}
		\label{eq:adjoint_PDE_mu}
		\dual{\mu}{v}
		:=
		-\bracks[\big]{
			\dual{J_y(\bar y,\bar u)}{(1-\psi) v}
			+
			\dual{\AA ((1-\psi) v)}{p}
		}
		\qquad
		\forall v \in H_0^1(\Omega)
		.
	\end{equation}
	Then,
	\eqref{eq:strong_stat_4}
	is satisfied.
\end{lemma}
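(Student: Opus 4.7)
The plan is to reduce the sign condition $\mu\in \KK(\bar u)\polar$ to the enhanced B-stationarity condition \eqref{eq:B_stat_H1} of \cref{lem:p_in_H1} by a clever choice of test direction. Concretely, fix $v\in \KK(\bar u)$ and take the candidate test direction $h:=\AA((1-\psi)v)\in H^{-1}(\Omega)$. Plugging $h$ into \eqref{eq:B_stat_H1} should produce exactly $-\dual{\mu}{v}\le 0$ after matching terms with the definition \eqref{eq:adjoint_PDE_mu}.

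First I would verify that $(1-\psi)v\in \KK(\bar u)$. Since $\psi=0$ in a neighborhood of $\Omega_a$, we have $(1-\psi)v = v$ there, so $(1-\psi)v\ge 0$ q.e.\ on $\Omega_a$; and because $\qsupp(\bar\xi)\subset \Omega_a$ together with $v=0$ q.e.\ on $\qsupp(\bar\xi)$ gives $(1-\psi)v=0$ q.e.\ on $\qsupp(\bar\xi)$. With $(1-\psi)v\in \KK(\bar u)$ the VI \eqref{eq:direc_diff} for direction $h=\AA((1-\psi)v)$ is solved trivially by $z = (1-\psi)v$ (its residual $\AA z - h$ vanishes), so $S'(\bar u;h) = (1-\psi)v$. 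Since $\psi=1$ in a neighborhood of $\Omega_b$, this yields $S'(\bar u;h)=0$ on $\Omega_b$, meeting the admissibility constraint of \eqref{eq:B_stat_H1}.

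Next I would show that $\varphi h = h$ as elements of $H^{-1}(\Omega)$, i.e., $\dual{\AA((1-\psi)v)}{(1-\varphi)w}=0$ for every $w\in H_0^1(\Omega)$. By \eqref{eq:smotoh_fuct}, $(1-\varphi)$ is supported in $\set{\psi=1}$; and (by a WLOG refinement of $\varphi$, $\psi$) one arranges that $\set{\varphi<1}$ lies in the interior of $\set{\psi=1}$. There both $1-\psi$ and $\nabla(1-\psi)$ vanish, hence $(1-\psi)v\equiv 0$ and $\nabla((1-\psi)v)\equiv 0$ in a neighborhood of $\supp((1-\varphi)w)$. Expanding $\dual{\AA((1-\psi)v)}{(1-\varphi)w}$ via the integral representation \eqref{eq:diff_op} then shows this integral vanishes. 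Consequently $S'(\bar u;\varphi h)=S'(\bar u;h)=(1-\psi)v\le 0$ on $\Omega_b$, and \eqref{eq:B_stat_H1} applies to yield
\begin{equation*}
\dual{J_y(\bar y,\bar u)}{(1-\psi)v} + \dual{h}{\varphi p} \ge 0.
\end{equation*}

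Finally I would match this with the definition \eqref{eq:adjoint_PDE_mu}. Since $\varphi p \in H_0^1(\Omega)$ by \cref{lem:p_in_H1} and $\varphi=1$ in a neighborhood of $\supp((1-\psi)v)$, the $H^{-1}\times H_0^1$ pairing $\dual{\AA((1-\psi)v)}{\varphi p}$ agrees with the intended interpretation of $\dual{\AA((1-\psi)v)}{p}$ in \eqref{eq:adjoint_PDE_mu} (independence of the cutoff $\varphi$ away from the support is verified as in the preceding paragraph). Substituting gives $-\dual{\mu}{v}\ge 0$, i.e., $\dual{\mu}{v}\le 0$. Since $v\in \KK(\bar u)$ was arbitrary, we obtain $\mu\in \KK(\bar u)\polar$, which is \eqref{eq:strong_stat_4}. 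The principal obstacle here is the careful bookkeeping of supports and pairings needed to justify $\varphi h = h$ and the identification of the pairings when $p$ has only $W_0^{1,q}$-regularity globally; this is what forces the interlocking choices of $\psi$ and $\varphi$ in \eqref{eq:smotoh_fuct}.
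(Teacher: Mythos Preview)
Your proposal is correct and follows essentially the same route as the paper: pick $v\in\KK(\bar u)$, set $h=\AA((1-\psi)v)$, observe $(1-\psi)v\in\KK(\bar u)$ so that $S'(\bar u;h)=(1-\psi)v$, note $\varphi h=h$ by the interlocking supports of $\psi$ and $\varphi$, and then apply \eqref{eq:B_stat_H1} to obtain $-\dual{\mu}{v}\ge 0$. Your extra care in justifying $\varphi h=h$ and the identification $\dual{\AA((1-\psi)v)}{p}=\dual{\AA((1-\psi)v)}{\varphi p}$ is warranted; the paper simply asserts these ``due to the construction of $\varphi$ and $\psi$'', and your observation that one may need $\{\varphi<1\}$ to lie in the interior of $\{\psi=1\}$ (which in fact already follows from \eqref{eq:smotoh_fuct} by continuity) is exactly the point that makes this rigorous.
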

\begin{proof}
	Since $\varphi p \in H_0^1(\Omega)$
	by \cref{lem:p_in_H1},
	we have $p \in H^1(\set{\psi < 1})$.
	Thus,
	the definition \eqref{eq:adjoint_PDE_mu}
	implies the regularity $\mu \in H^{-1}(\Omega)$.

	In order to check \eqref{eq:strong_stat_4},
	we take an arbitrary $v \in \KK(\bar u)$.
	Then, $(1-\psi) v \in \KK(\bar u)$ as well
	and, consequently,
	$S'(\bar u; h) = (1-\psi) v$
	for $h = \AA ( (1-\psi) v)$.
	Note that $h = \varphi h$ due to the construction of $\varphi$ and $\psi$,
	i.e., $S'(\bar u; \varphi h) = (1-\psi) v$ as well.
	Since $(1-\psi) v = 0$ on $\Omega_b$,
	we can use $h$ in \eqref{eq:B_stat_H1}
	and obtain
	\begin{align*}
		-\dual{\mu}{v}
		&=
		\dual{J_y(\bar y,\bar u)}{ (1-\psi) v }
		+
		\dual{\AA ( (1-\psi) v)}{p}
		\\
		&=
		\dual{J_y(\bar y,\bar u)}{ (1-\psi) v }
		+
		\dual{\AA ( (1-\psi) v)}{\varphi p}
		\ge
		0.
	\end{align*}
	Since $v \in \KK(\bar u)$ was arbitrary,
	this shows $\mu \in \KK(\bar u)\polar$.
\end{proof}
By collection of the results of
\cref{lem:p_in_W1q,lem:p_in_H1,lem:sign_conditions_p,lem:adjoint_PDE_nu,lem:adjoint_PDE_mu},
we can show that the system of strong stationarity
is equivalent to the B-stationarity from \cref{thm:B_stat_second}.
\begin{theorem}
	\label{thm:strong}
	We assume $\Uad = L^2(\Omega)$ and let $\bar u \in \Ustate$ be given.
	Then,
	$\bar u$ is strongly stationary
	if and only if
	the B-stationarity \eqref{eq:B_stat_second}
	is satisfied.
\end{theorem}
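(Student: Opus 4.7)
The plan is to prove the equivalence by two implications, with the forward direction (B-stationarity implies strong stationarity) essentially assembling the preceding Lemmas \ref{lem:p_in_W1q}--\ref{lem:adjoint_PDE_mu}. Since $\Uad=L^2(\Omega)$, the choice $\lambda=0$ immediately satisfies \eqref{eq:strong_stat_6}. I set $p:=J_u(\bar y,\bar u)$, which makes \eqref{eq:strong_stat_2} trivial. Then \cref{lem:p_in_W1q} yields $p\in W_0^{1,q}(\Omega)$ for $q<n/(n-1)$; \cref{lem:p_in_H1} upgrades the regularity to $\varphi p\in H_0^1(\Omega)$; and \cref{lem:sign_conditions_p} delivers both $p\in H^1(\hat\Omega_a)$ on an open neighborhood of $\Omega_a$ and the sign conditions \eqref{eq:strong_stat_3}. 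Defining $\nu\in\MM(\Omega)^+$ via \cref{lem:adjoint_PDE_nu} and $\mu\in H^{-1}(\Omega)$ via \cref{lem:adjoint_PDE_mu} produces \eqref{eq:strong_stat_5} and \eqref{eq:strong_stat_4}, respectively.

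The only task left in the forward direction is to glue the two localized adjoint equations into the global very weak adjoint equation \eqref{eq:strong_stat_1}. Here I would decompose every $z\in Z$ as $z=\psi z+(1-\psi)z$ using the smooth cutoff $\psi$ from \eqref{eq:smotoh_fuct_1}. \cref{lem:smooth_multiplier} guarantees that both $\AA(\psi z)$ and $\AA((1-\psi)z)$ lie in $W^{-1,q'}(\Omega)$, so all pairings with $p$ make sense. Applying \cref{lem:adjoint_PDE_nu} to $\psi z$ and \cref{lem:adjoint_PDE_mu} with test function $z\in H_0^1(\Omega)$ gives
\begin{align*}
\dual{\AA(\psi z)}{p}+\dual{J_y(\bar y,\bar u)}{\psi z}+\int_\Omega z\,\d\nu &=0,\\
\dual{\AA((1-\psi)z)}{p}+\dual{J_y(\bar y,\bar u)}{(1-\psi)z}+\dual{\mu}{z}_{H_0^1(\Omega)}&=0,
\end{align*}
and adding these equations collapses the splitting of $J_y$ and reconstitutes
\begin{equation*}
\dual{\AA z}{p}_{W_0^{1,q}(\Omega)}+\dual{J_y(\bar y,\bar u)}{z}+\int_\Omega z\,\d\nu+\dual{\mu}{z}_{H_0^1(\Omega)}=0,
\end{equation*}
which is the very weak formulation \eqref{eq:adjveryweak2} of \eqref{eq:strong_stat_1}.

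The reverse direction (strong stationarity implies B-stationarity) is comparatively short. Given $h\in L^2(\Omega)$ with $S'(\bar u;h)\le 0$ on $\Omega_b$, the function $z:=S'(\bar u;h)\in\KK(\bar u)$ satisfies the VI \eqref{eq:direc_diff}, so $r:=h-\AA z$ belongs to $\KK(\bar u)^\circ$ with $\dual{r}{z}=0$ (cone structure). One then tests the adjoint equation with a suitable regularization of $z$: the piece $\psi z$ can be absorbed via the $\nu$-part using $\nu\ge 0$, $\supp\nu\subset\Omega_b$ and $z\le 0$ on $\Omega_b$, while $(1-\psi)z$ is paired with $\varphi p\in H_0^1(\Omega)$ (which exists by \cref{lem:p_in_H1}) and handled through $\mu\in\KK(\bar u)^\circ$ together with $z\in\KK(\bar u)$. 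Combining the two inequalities with \eqref{eq:strong_stat_2} (and $\lambda=0$) yields \eqref{eq:B_stat_second}.

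The main obstacle, in both directions, is the mismatch between the regularity of $p$ (which is only in $W_0^{1,q}(\Omega)$ globally, with $H^1$-regularity only on $\hat\Omega_a$) and the natural test spaces involved: $\AA z$ for $z\in Z$ lies in $W^{-1,q'}(\Omega)$, and $S'(\bar u;h)$ is generally not in $Z$. The cutoff construction encoded in $\psi$ and $\varphi$ from \eqref{eq:smotoh_fuct}, together with \cref{lem:smooth_multiplier} and the $H_0^1$-bound on $\varphi p$, is what makes all pairings admissible; this is the delicate step and also the reason the separation property of \cref{lem:separation_support} (via the Slater condition) is indispensable.
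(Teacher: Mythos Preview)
Your forward direction is identical to the paper's: assemble \cref{lem:p_in_W1q}--\cref{lem:adjoint_PDE_mu}, then splice the two localized adjoint identities via $z=\psi z+(1-\psi)z$. Nothing to add there.

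In the reverse direction you have the right skeleton (test the adjoint PDE, then exploit $z\in\KK(\bar u)$, $r\in\KK(\bar u)^\circ$, $\nu\ge 0$ on $\Omega_b$, and the sign of $p$ on $\Omega_a$), but the phrase ``suitable regularization of $z$'' hides the actual work. The difficulty is that $z=S'(\bar u;h)$ need not lie in $Z$, because $\AA z=h-\hat\xi$ with $\hat\xi\in H^{-1}(\Omega)$ only. Your proposed $\psi z/(1-\psi)z$ split does give $\psi z\in Z$ (since $\psi\hat\xi=0$), but $(1-\psi)z\notin Z$ in general, so you still cannot test the very weak adjoint equation with that piece; the pairing $\dual{\AA((1-\psi)z)}{p}$ is not defined, and invoking $\varphi p\in H_0^1(\Omega)$ does not by itself legitimize it. The paper resolves this differently: it observes $\varphi\hat\xi=\hat\xi$, approximates $\hat\xi$ by $\xi_k\in L^2(\Omega)$ in $H^{-1}(\Omega)$, and tests with $z_k:=\AA^{-1}(h-\varphi\xi_k)\in Z$. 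The key convergences are $z_k\to z$ in $H_0^1(\Omega)$ (via \cref{thm:existenceh1}) and, crucially, $z_k\to z$ in $C(\Omega_b)$ (via \cref{lem:hinfboundu}, since the error $\AA(z_k-z)=\varphi(\hat\xi-\xi_k)$ is supported away from $\Omega_b$); the latter is what lets you pass to the limit in $\int z_k\,\d\nu$. After the limit one reads off
\[
\innerprod{h}{p}+\dual{J_y}{z}=\dual{\hat\xi}{\varphi p}-\dual{\mu}{z}-\int_\Omega z\,\d\nu\ge 0,
\]
using $-\varphi p\in\KK(\bar u)$, $z\in\KK(\bar u)$, and $z\le 0$ on $\Omega_b$.

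One minor slip: in the reverse direction the fact $\varphi p\in H_0^1(\Omega)$ comes from the hypothesis $p\in H^1(\hat\Omega_a)$ in \cref{def:strong_stationarity}, not from \cref{lem:p_in_H1} (which belongs to the forward direction).
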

\begin{proof}
	``$\Leftarrow$'':
	By using the results from the previous lemmas,
	it remains to show that the adjoint PDE \eqref{eq:strong_stat_1}
	is satisfied.
	To this end, let $z \in Z$ be arbitrary.
	Using \eqref{eq:adjoint_PDE_nu} and \eqref{eq:adjoint_PDE_mu},
	we have
	\begin{align*}
		\dual{J_y(\bar y,\bar u)}{z}
		+
		\dual{\AA z}{p}
		&=
		\dual{J_y(\bar y,\bar u)}{(1-\psi) z}
		+
		\dual{\AA ((1-\psi) z)}{p}
		\\&\qquad
		+
		\dual{J_y(\bar y,\bar u)}{\psi z}
		+
		\dual{\AA (\psi z)}{p}
		\\
		&=
		-\dual{\mu}{\nu}
		-\int_\Omega z \, \d\nu
		.
	\end{align*}
	Hence, the adjoint PDE is satisfied.

	``$\Rightarrow$'':
	Now assume that the system of strong stationarity \eqref{eq:strong_stat}
	is satisfied.
	Let $h \in L^2(\Omega)$ with $v := S'(\bar u; h) \le 0$ on $\Omega_b$
	be given.
	We set
	$\xi_h := h - \AA v \in \KK(\bar u)\polar \subset H^{-1}(\Omega)$.
	Note that, in general, $v \not\in Z$,
	therefore we cannot use $v$ directly as a test function in the adjoint PDE.

	Due to the differentiability result \cref{thm:direc_diff},
	we have
	$\hat\xi := h - \AA v \in \KK(\bar u)\polar$.
	This implies $\varphi \hat\xi = \hat\xi$.
	In order to test the adjoint PDE,
	we approximate $\hat\xi \in H^{-1}(\Omega)$
	by a sequence $\seq{\xi_k}_{k \in \N} \subset L^2(\Omega)$
	such that $\xi_k \to \hat\xi$ in $H^{-1}(\Omega)$.
	This implies $\varphi \xi_k \to \varphi \hat\xi = \hat\xi$ in $H^{-1}(\Omega)$.
	Now,
	we can test the adjoint PDE by $z_k := \AA^{-1}(h - \varphi \xi_k) \in Z$
	and obtain
	\begin{equation*}
		\int_\Omega p \, (h - \varphi \xi_k) \, \d x
		+
		\dual{J_y(\bar y,\bar u) + \mu}{ z_k }
		+
		\int_{\Omega} z_k \, \d\nu
		.
	\end{equation*}
	Now, we have
	$z_k \to v$ in $H_0^1(\Omega)$
	and
	$z_k \to v$ in $C(\Omega_b)$
	due to \cref{thm:existenceh1,lem:hinfboundu}.
	Since the measure $\nu$ is supported on $\Omega_b$,
	we can pass to the limit $k \to \infty$
	and obtain
	\begin{equation*}
		\int_\Omega p h \, \d x
		-
		\dual{\hat\xi}{\varphi p}_{H_0^1(\Omega)}
		+
		\dual{J_y(\bar y,\bar u) + \mu}{ v }_{H_0^1(\Omega)}
		+
		\int_{\Omega} v \, \d\nu
		=
		0
		.
	\end{equation*}
	Now, we can use the sign conditions
	from the adjoint system and from $v$ and $\hat\xi$.
	This results in
	\begin{equation*}
		\int_\Omega p h \, \d x
		+
		\dual{J_y(\bar y,\bar u)}{ v }_{H_0^1(\Omega)}
		\ge
		0
		.
	\end{equation*}
	Since $h$ was arbitrary (as above) and $p = J_u(\bar y,\bar u)$,
	this shows \eqref{eq:B_stat_second}.
\end{proof}
Note that the second part of the proof also works in case $\Uad \ne L^2(\Omega)$.

\begin{remark}\leavevmode
	\label{rem:strongstat}
	\begin{enumerate}
		\item
			In the case that $\bar u$ is even a locally optimal control,
			one can use the results of \cref{sec:regularization}
			to skip some parts of the proofs,
			since the system of C-stationarity
			already includes the regularity of $p$
			and the adjoint equation.
			However, one still needs to extend the B-stationarity condition
			via density to \eqref{eq:B_stat_W1p} and \eqref{eq:B_stat_H1}
			to show the signs of $p$ and $\mu$.
		\item
			In the case $\Uad = L^2(\Omega)$,
			one can obtain the uniqueness of the multipliers.
			First, we infer $\lambda = 0$
			and, thus, $p$ is unique via \eqref{eq:strong_stat_2}.
			Let us argue that $\mu$ and $\nu$ are unique
			by using \eqref{eq:strong_stat_1}
			and the fact that the supports of $\mu$ and $\nu$
			are disjoint.
			To this end, let $\varphi \in C_c^\infty(\Omega)$
			be given such that
			$\varphi = 0$ on $\set{\bar y = y_a}$.
			Thus, $\pm \varphi \in \KK(\bar u)$
			and \eqref{eq:strong_stat_4}
			implies $\dual{\mu}{\varphi} = 0$.
			Hence,
			\eqref{eq:strong_stat_1}
			implies
			\begin{equation*}
				\dual{\nu}{\varphi}
				=
				-\dual{\AA\adjoint p + J_y(\bar y,\bar u)}{\varphi}
				\qquad\forall\varphi \in C_c^\infty, \varphi = 0 \text{ on } \set{\bar y = y_a}
				.
			\end{equation*}
			Since the support of $\nu$ is contained in $\set{\bar y = y_b}$,
			the measure $\nu$ is uniquely determined
			by the values of $\dual{\nu}{\varphi}$
			for these test functions $\varphi$.
			Since $p$ is unique, the uniqueness of $\nu$ follows.
			Consequently, the uniqueness of $\mu$ follows from \eqref{eq:strong_stat_1}.
	\end{enumerate}
\end{remark}

The next result addresses
the question of characterizing the normal cone to $\Ustate$,
which was left open in \cref{sec:primal_systems}.
\begin{lemma}
	\label{lem:normal_cone_via_strong}
	Let $\bar u, \tau \in L^2(\Omega)$ be given
	and set $\bar y := S(\bar u)$.
	Then, $\tau \in \NN_{\Ustate}(\bar u)$
	is equivalent to the existence of
	$p \in W_0^{1,q}(\Omega)$, $\mu \in H^{-1}(\Omega)$, $\nu \in \MM(\Omega)^+$
	such that
	$p \in H^1(\hat\Omega_a)$ for some open $\hat\Omega_a \supset \set{\bar y = y_a}$,
	$p = -\tau$,
	\begin{align*}
		\AA^\star p + \nu + \mu &= 0
	\end{align*}
	and the sign conditions
	\eqref{eq:strong_stat_3}, \eqref{eq:strong_stat_4}, \eqref{eq:strong_stat_5}
	are satisfied.
\end{lemma}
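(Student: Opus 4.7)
My plan is to reduce the claim to an application of \cref{thm:strong} to an auxiliary problem whose linear objective encodes~$\tau$. Specifically, I would consider the problem \eqref{eq:P:unreg} with $\Uad = L^2(\Omega)$ and $J(y,u) := -\innerprod{\tau}{u}$, for which $J_y(\bar y,\bar u) = 0$ and $J_u(\bar y,\bar u) = -\tau$.

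I would begin by observing that since $\Ustate$ is closed and convex (\cref{lem:mask_state_constraint}) and we identify $L^2(\Omega)$ with its dual, the inclusion $\tau \in \NN_{\Ustate}(\bar u)$ is equivalent to
\begin{equation*}
	-\innerprod{\tau}{h} \ge 0 \qquad \forall h \in \TT_{\Ustate}(\bar u).
\end{equation*}
Since $\Uad = L^2(\Omega)$ yields $\TT_{\Ueff}(\bar u) = \TT_{\Ustate}(\bar u)$ via \cref{thm:cones_Ueff}, and \cref{thm:tangent_Ustate} supplies the representation of this tangent cone, the displayed inequality is exactly the B-stationarity condition \eqref{eq:B_stat_second} for the auxiliary problem.

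Next, I would apply \cref{thm:strong}, which asserts, without requiring $\bar u$ to be a local minimizer, that B-stationarity is equivalent to strong stationarity in the sense of \cref{def:strong_stationarity}. Writing strong stationarity out for the auxiliary data, one has $\lambda \in \NN_{L^2(\Omega)}(\bar u) = \set{0}$, so \eqref{eq:strong_stat_2} gives $p = -\tau$, and the adjoint equation \eqref{eq:strong_stat_1} collapses to $\AA\adjoint p + \nu + \mu = 0$ since $J_y = 0$. The conditions \eqref{eq:strong_stat_3}--\eqref{eq:strong_stat_5} transfer verbatim, producing exactly the system claimed, including the regularity $p \in W_0^{1,q}(\Omega) \cap H^1(\hat\Omega_a)$.

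The main subtlety to address is the formal applicability of \cref{thm:strong} with this choice of~$J$: the assumption \cref{asm:big_list}~\ref{item:asm_objective} includes lower semicontinuity, lower boundedness, and coercivity of $J$, all of which fail for a linear functional on $L^2(\Omega)$. However, these properties are only invoked in \cref{thm:existence} to secure a minimizer; the proof of \cref{thm:strong} and its supporting lemmas use $J$ only through the Fréchet derivatives $J_y(\bar y,\bar u)$ and $J_u(\bar y,\bar u)$ at the fixed point under consideration, so the argument carries over unchanged. The Slater assumption \cref{asm:big_list}~\ref{item:asm_slater}, which is genuinely needed (for instance in \cref{lem:p_in_W1q}), concerns only $y_a$, $y_b$, and $\Uad$ and is unaffected by the change of objective. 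Both directions of the biconditional then follow from the corresponding directions of \cref{thm:strong}.
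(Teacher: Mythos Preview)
Your proposal is correct and follows essentially the same route as the paper: introduce the auxiliary problem with $\Uad = L^2(\Omega)$ and $J(y,u) = -\innerprod{\tau}{u}$, identify $\tau \in \NN_{\Ustate}(\bar u)$ with B-stationarity \eqref{eq:B_stat_second} for this problem via \cref{thm:tangent_Ustate}, and invoke the equivalence in \cref{thm:strong}. Your explicit remark that the lower-boundedness and coercivity parts of \cref{asm:big_list}~\ref{item:asm_objective} are only used for existence and not in the proof of \cref{thm:strong} is a welcome clarification that the paper leaves implicit.
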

\begin{proof}
	Let $\tau \in L^2(\Omega)$ be arbitrary.
	We consider the auxiliary problem
	\begin{align*}
		\text{Minimize }\quad &{-\innerprod{\tau}{u}}
		\\
		\text{such that}\quad & u \in \Ustate
		.
	\end{align*}
	Note that this is a special case of problem \eqref{eq:P:unreg}
	with $\Uad = L^2(\Omega)$
	and $J(y,u) := -\innerprod{\tau}{u}$.
	Now,
	$\tau \in \NN_{\Ustate}(\bar u)$
	is equivalent to
	\begin{equation*}
		-\innerprod{\tau}{ h} \ge 0
		\qquad\forall h \in \TT_{\Ustate}(\bar u).
	\end{equation*}
	Using the characterization \eqref{eq:tangent_Ustate}
	of the tangent cone,
	this is
	equivalent to the B-stationarity \eqref{eq:B_stat_second} of $\bar u$
	for the auxiliary problem.
	Now, the assertion follows
	from the equivalency of B-stationarity and strong stationarity
	in \cref{thm:strong}.
\end{proof}

Using the characterization of the normal cone to $\Ustate$,
we can use 
directly the optimality system from
\cite[Theorem~1.1]{Wachsmuth2014:2},
i.e., with $\Ueff$ as control constraints.
Let us check that this does not yield a system of C-stationarity
in the case $\Uad \subset L^2(\Omega)$.
From the referenced optimality system,
we get the existence of
$\mu_1 \in H^{-1}(\Omega)$, $\hat\lambda \in L^2(\Omega)$ and $p_1 \in H_0^1(\Omega)$
such that the system
\begin{align*}
	\AA^\star p_1 + J_y(\bar y,\bar u) + \mu_1 &= 0 \quad\text{in } H^{-1}(\Omega), \\
	J_u(y,u) + \hat\lambda - p_1 &= 0 \quad\text{in } L^2(\Omega), \\
	p_1 & = 0 \quad\text{q.e.\ on } \qsupp(\bar\xi), \\
	\dual{\mu_1}{v}_{H^{-1}, H_0^1}
	&= 0 \quad\forall v \in H_0^1(\Omega), v = 0 \text{ q.e.\ on } \Omega_a,
	\\
	\dual{\mu_1}{\Phi p_1} &\ge 0
	\quad
	\forall \Phi \in W^{1,\infty}(\Omega)^+,
	\\
	\hat\lambda &\in \NN_{\Ueff}(u)
\end{align*}
is satisfied.
Next we use
\cref{thm:cones_Ueff,lem:normal_cone_via_strong}
to evaluate
$\hat\lambda \in \NN_{\Ueff}(\bar u)$.
This yields the existence of
$\lambda \in \NN_{\Uad}(\bar u)$,
$p_2 \in W_0^{1,q}(\Omega)$, $\mu_2 \in H^{-1}(\Omega)$, $\nu \in \MM(\Omega)^+$
such that
$\hat\lambda = \lambda - p_2$,
$p_2 \in H^1(\hat\Omega_a)$ for some open $\hat\Omega_a \supset \set{\bar y = y_a}$,
\begin{align*}
	\AA^\star p_2 + \nu + \mu_2 &= 0
\end{align*}
and the sign conditions
\eqref{eq:strong_stat_3}, \eqref{eq:strong_stat_4}, \eqref{eq:strong_stat_5}
are satisfied by $p_2$, $\mu_2$ and $\nu$, respectively.

By defining
$p = p_1 + p_2$ and
$\mu = \mu_1 + \mu_2$,
we arrive at
\eqref{eq:c_stat_1}
and
\eqref{eq:c_stat_2}.
The conditions \eqref{eq:c_stat_5} and \eqref{eq:c_stat_6}
on $\nu$ and $\lambda$ follow.
Moreover,
it is straightforward to see that
\eqref{eq:c_stat_3}, \eqref{eq:c_stat_4}
are satisfied.
However,
the sign condition
\eqref{eq:c_stat_45}
will, in general,
not be valid.

%%fakesection: Bibligraphy
\printbibliography

\end{document}